\newtheorem{thm}{Theorem}[section]
\newtheorem{cor}[thm]{Corollary}
\newtheorem{lem}[thm]{Lemma}
\newtheorem{prop}[thm]{Proposition}
\newtheorem{f}[thm]{Fact}
\newtheorem{de}[thm]{Definiton}
\newtheorem{rem}[thm]{Remark}
\newtheorem*{con}{Conjecture}
\let\originalforall=\forall
\renewcommand{\forall}{\mathop{\vcenter{\hbox{\Large$\originalforall$}}}}
\let\originalexists=\exists
\renewcommand{\exists}{\mathop{\vcenter{\hbox{\Large$\originalexists$}}}}
\title{Spectrally reasonable measures II}
\author{Przemys\l aw Ohrysko}
\address{Chalmers University of Technology and the University of Gothenburg}
\email{p.ohrysko@gmail.com}
\thanks{Supported by foundations managed by The Royal Swedish Academy of Sciences}
\author{Micha\l \space Wojciechowski}
\address{Institute of Mathematics of Polish Academy of Sciences}
\email{miwoj.impan@gmail.com}
\begin{document}
\begin{abstract}
A measure on a locally compact Abelian group is said to have a natural spectrum if its spectrum is equal to the closure of the range of the Fourier-Stieltjes transform. In this paper we continue the study of spectrally reasonable measures (measures perturbing any measure with a natural spectrum to a measure with a natural spectrum) initiated in \cite{ow}. Particularly, we provide a full characterization of such measures for certain class of locally compact Abelian groups which includes the circle and the real line. We also elaborate on the spectral properties of measures with non-natural but real spectra constructed by F. Parreau.
\end{abstract}
\subjclass[2010]{Primary 43A10; Secondary 43A25.}

\keywords{Natural spectrum, Wiener--Pitt phenomenon, Spectrally reasonable measures.}
\maketitle
\section{Introduction}
We collect first some basic facts from Banach algebra theory and harmonic analysis in order to fix the notation (our main reference for Banach algebra theory is \cite{z}, for harmonic analysis check \cite{r}). For a commutative unital Banach algebra $A$, the Gelfand space of $A$ (the set of all multiplicative-linear functionals endowed with weak$^{\ast}$-topology) will be abbreviated $\triangle(A)$ and the Gelfand transform of an element $x\in A$ is a surjection $\widehat{x}:\triangle(A)\rightarrow\sigma(x)$ defined by the formula: $\widehat{x}(\varphi)=\varphi(x)$ for $\varphi\in\triangle(A)$ where $\sigma(x):=\{\lambda\in\mathbb{C}:\mu-\lambda\delta_{0}\text{ is not invertible}\}$ is the spectrum of an element $x$. The spectral radius of an element $x\in A$ is denoted by $r(x)$. A Banach algebra with involution is called symmetric if $\sigma(x)\subset\mathbb{R}$ for every self-adjoint $x\in A$.
\\
Let $G$ be a locally compact Abelian group with its unitary dual $\widehat{G}$ and let $M(G)$ denote the Banach algebra of all complex-valued Borel regular measures equipped with the convolution product and the total variation norm. For $\mu\in M(G)$ we write $|\mu|$ to signify the total variation measure. The Fourier-Stieltjes transform will be treated as a restriction of the Gelfand transform to $\widehat{G}$. $M(G)$ is also equipped with involution $\mu\mapsto\widetilde{\mu}$ where $\widetilde{\mu}(E):=\overline{\mu(-E)}$ for every Borel set $E\subset G$. A measure $\mu$ is Hermitian if $\mu=\widetilde{\mu}$ or equivalently, if its Fourier-Stieltjes transform is real-valued. The closed ideal of measures with Fourier-Stieltjes transforms vanishing at infinity is denoted by $M_{0}(G)$, the closed subalgebra of all discrete (purely atomic) measures is denoted by $M_{d}(G)$ and the closed ideal of continuous (non-atomic) measures will be denoted by $M_{c}(G)$.
\\
In his fundamental paper (\cite{Zafran}) M. Zafran studied measures with a \textit{natural spectrum}.
\begin{de}
A measure $\mu\in M(G)$ is said to have a natural spectrum if $\sigma(\mu)=\overline{\widehat{\mu}(\widehat{G})}$. The set of all such measures will be denoted by $\mathcal{N}(G)$.
\end{de}
The existence of measures with a non-natural spectrum was first observed by N. Wiener and H. R. Pitt in \cite{wp} and in recognition of the authors this strange spectral behaviour was called the \textbf{Wiener--Pitt phenomenon}. The first rigorous proof of the existence of the Wiener--Pitt phenomenon was given by Y. Schreider \cite{schreider} and simplified later by Williamson \cite{wil}. An alternative approach (using Riesz products) was introduced by C.~C. Graham \cite{graham} and nowadays we know many examples of measures in $M_{0}(G)$ with a non-natural spectrum for any locally compact non-discrete Abelian group $G$ - consult for example Chapters 5--7 in \cite{grmc}.
\\
However, it is not difficult to list a broad class of measures in $\mathcal{N}(G)$, for example $L^{1}(G)\subset \mathcal{N}(G)$ and more generally $M_{00}(G)\subset \mathcal{N}(G)$ where
\begin{equation*}
M_{00}(G):=\{\mu\in M(G):\widehat{\mu}(\varphi)=0\text{ for all }\varphi\notin\widehat{G}\}\subset M_{0}(G).
\end{equation*}
It is also well-known that $M_{d}(G)\subset \mathcal{N}(G)$.
\\
Despite of the presence of the rich collection of examples it turned out that the set of measures with natural spectrum does not have good algebraic properties. In fact, it is  even not closed under addition which was first proved by M. Zafran in \cite{Zafran} with an additional assumption on the group. The general case was settled by O. Hatori in \cite{h} (for non-compact groups) and O. Hatori and E. Sato in \cite{hs} (for compact groups). We cite this two theorems for further reference.
\begin{thm}[Hatori]\label{hat}
Let $G$ be a non-compact locally compact Abelian group. Then $M(G)=\mathcal{N}(G)+L^{1}(G)$.
\end{thm}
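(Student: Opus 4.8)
The plan is to reduce the statement to a single approximation problem on $\widehat{G}$ and then solve that problem by an explicit construction that exploits the non-compactness of $G$. First I would record the Gelfand-theoretic picture. Since $M(G)$ is unital with identity $\delta_{0}$, the spectrum is the full range of the Gelfand transform, $\sigma(\mu)=\widehat{\mu}(\triangle(M(G)))$. Because $L^{1}(G)$ is a closed ideal in $M(G)$ whose Gelfand space is precisely $\widehat{G}$, each $\varphi\in\triangle(M(G))$ either annihilates $L^{1}(G)$, or restricts to a character $\gamma$ on $L^{1}(G)$; in the latter case the ideal identity $\varphi(\mu)\varphi(f)=\varphi(\mu\ast f)=\widehat{\mu\ast f}(\gamma)=\widehat{\mu}(\gamma)\widehat{f}(\gamma)$ (valid since $\mu\ast f\in L^{1}(G)$) forces $\varphi=\gamma$ on all of $M(G)$. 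Writing $X:=\{\varphi\in\triangle(M(G)):\varphi|_{L^{1}(G)}=0\}$ we thus obtain the set-theoretic splitting $\triangle(M(G))=\widehat{G}\sqcup X$, whence $\sigma(\mu)=\widehat{\mu}(\widehat{G})\cup\widehat{\mu}(X)$, and therefore $\mu\in\mathcal{N}(G)$ if and only if $\widehat{\mu}(X)\subseteq\overline{\widehat{\mu}(\widehat{G})}$.

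The decisive feature of this splitting is that $L^{1}(G)$ is invisible to $X$: for $f\in L^{1}(G)$ one has $\widehat{f}|_{X}\equiv 0$, while on $\widehat{G}$ the transforms simply add, $\widehat{(\mu+f)}|_{\widehat{G}}=\widehat{\mu}+\widehat{f}$. Hence, setting $K:=\widehat{\mu}(X)\subseteq\sigma(\mu)$, I would reduce the theorem to the following covering claim: for every $\mu\in M(G)$ there exists $f\in L^{1}(G)$ with
\[
K\subseteq\overline{\{\widehat{\mu}(\gamma)+\widehat{f}(\gamma):\gamma\in\widehat{G}\}}.
\]
Granting this, the value set $\widehat{\mu}(X)=K$ is absorbed into the closure of the $\widehat{G}$-range of $\mu+f$, so $\sigma(\mu+f)=\overline{\widehat{(\mu+f)}(\widehat{G})}$, that is $\mu+f\in\mathcal{N}(G)$; since $f\in L^{1}(G)\subseteq\mathcal{N}(G)$, replacing $f$ by $-f$ yields the decomposition $\mu=(\mu-f)+f$ with $\mu-f\in\mathcal{N}(G)$.

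The construction of $f$ realizing this covering is the heart of the matter, and the step I expect to be the main obstacle. Here the hypothesis that $G$ is non-compact enters through the non-discreteness of $\widehat{G}$, which supplies enough room to build an integrable function whose transform sweeps densely across the compact set $K$. I would first invoke the structure theorem for locally compact Abelian groups to reduce to a convenient model in which $\widehat{G}$ carries a closed copy of $\mathbb{R}$ (or an analogous non-discrete piece), and then construct $f$ whose transform, added to $\widehat{\mu}$ and tapered back to $\widehat{\mu}$ near infinity, traces a path whose closure contains $K$. The subtlety is genuinely analytic: prescribing $\widehat{f}$ along a discrete sequence fails, since $\widehat{f}\in C_{0}(\widehat{G})$ would force the prescribed values to vanish; and confining the action to a bounded region so as to cover a possibly two-dimensional $K$ rules out any bounded-variation (e.g.\ sum-of-bumps) attempt by a length argument. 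One must instead exhibit a true element of $\widehat{L^{1}(G)}$ of unbounded variation whose range is dense in $K$ and verify its integrability. Establishing this covering lemma, together with carrying it through the structure theory of general non-compact groups, is where the real work lies.
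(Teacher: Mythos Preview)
The paper does not give its own proof of this theorem: it is quoted from Hatori's 1998 paper \cite{h} and used as a black box, so there is no in-paper argument to compare against.

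On its own merits, your reduction is correct and is in fact the standard one. The splitting $\triangle(M(G))=\widehat{G}\sqcup X$ with $X=\{\varphi:\varphi|_{L^{1}(G)}=0\}$, the observation that $\widehat{f}|_{X}\equiv 0$, and the resulting criterion ``$\mu+f\in\mathcal{N}(G)$ iff $\widehat{\mu}(X)\subset\overline{(\widehat{\mu}+\widehat{f})(\widehat{G})}$'' are exactly right, and since $M(G)$ is unital $X$ is compact so the target set $K=\widehat{\mu}(X)$ is compact (indeed $K\subset\overline{D(0,\|\mu\|)}$).

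What is missing is precisely what you flag as missing: the construction of $f$. Your proposal stops at the point where the actual work begins, so as written it is an outline rather than a proof. The worry you raise about ``unbounded variation'' and space-filling behaviour is a red herring, however: one does not need to build $\widehat{f}$ by hand as a $C_{0}$ function of complicated oscillation. The clean route---and the one Hatori takes---is to exploit a Helson set. Since $G$ is non-compact, $\widehat{G}$ is non-discrete and therefore contains a compact perfect set, inside which one finds a Helson set $C$ homeomorphic to the Cantor set. Pick a continuous surjection $g:C\to\overline{D(0,\|\mu\|)}$ (Alexandroff--Hausdorff), and use the Helson property to choose $f\in L^{1}(G)$ with $\widehat{f}|_{C}=g-\widehat{\mu}|_{C}$. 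Then $(\widehat{\mu}+\widehat{f})(C)=g(C)=\overline{D(0,\|\mu\|)}\supset K$, and the covering is achieved in one stroke. Note that this is exactly the device the present paper itself uses later (in the proofs of Theorem~\ref{abso} and Theorem~\ref{idem}), so the machinery is already at hand; no structure-theoretic reduction to a copy of $\mathbb{R}$ is needed.
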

\begin{thm}[Hatori,Sato]\label{hats}
Let $G$ be a compact Abelian group. Then $M(G)=\mathcal{N}(G)+\mathcal{N}(G)+M_{d}(G)$.
\end{thm}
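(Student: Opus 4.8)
The plan is to isolate the real content of the theorem by a standard measure-theoretic reduction and then to attack the surviving continuous part. If $G$ is finite then $M(G)=M_d(G)$ and there is nothing to prove, so assume $G$ infinite, whence $\widehat G$ is infinite and discrete. Decompose an arbitrary $\mu\in M(G)$ as $\mu=\mu_d+\mu_c$ with $\mu_d\in M_d(G)$ and $\mu_c\in M_c(G)$. Since $M_d(G)\subseteq\mathcal N(G)$ the atomic part is harmless, and because the inclusion $\mathcal N(G)+\mathcal N(G)+M_d(G)\subseteq M(G)$ is trivial, the whole theorem reduces to the single inclusion $M_c(G)\subseteq\mathcal N(G)+\mathcal N(G)$. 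This already explains the shape of the statement: the summand $M_d(G)$ accounts for the discrete part, and the two copies of $\mathcal N(G)$ must handle one continuous measure.

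For the continuous part I would first try to import the non-compact theory through an auxiliary group. Put $H=G\times\mathbb Z$, which is non-compact, so Theorem~\ref{hat} gives $M(H)=\mathcal N(H)+L^1(H)$. Lift $\mu_c$ to $\mu_c\times\delta_0\in M(H)$ and write $\mu_c\times\delta_0=\alpha+f$ with $\alpha\in\mathcal N(H)$ and $f\in L^1(H)$. Pushing forward under the projection $p\colon H\to G$ (a norm-decreasing unital algebra homomorphism, hence spectrum-shrinking) yields $\mu_c=p_*\alpha+p_*f$. The second summand is benign: summing an element of $L^1(G\times\mathbb Z)$ over the $\mathbb Z$-variable lands in $L^1(G)\subseteq M_{00}(G)\subseteq\mathcal N(G)$, so $p_*f\in\mathcal N(G)$. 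Everything therefore comes down to the single measure $p_*\alpha$.

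The crux is that $p_*\alpha$ need not lie in $\mathcal N(G)$, and this is precisely where compactness forbids the cleaner conclusion $M(G)=\mathcal N(G)+L^1(G)$. Indeed $\sigma_{M(G)}(p_*\alpha)\subseteq\sigma_{M(H)}(\alpha)=\overline{\widehat\alpha(\widehat H)}$ with $\widehat H=\widehat G\times\mathbb T$, while $\widehat{p_*\alpha}$ only sees the slice $\{(\gamma,1):\gamma\in\widehat G\}$; thus the spectrum of $p_*\alpha$ is pinned inside the closed range of $\widehat\alpha$ over the \emph{enlarged} dual, which can be strictly larger than $\overline{\widehat{p_*\alpha}(\widehat G)}$. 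The remaining and decisive task is to split this ``almost natural'' measure into two genuinely natural pieces. Here I would invoke Zafran's analysis of $\mathcal N(G)\cap M_0(G)$ together with the stability of natural spectrum under adding elements of $M_{00}(G)$, and slice the support of $\widehat{p_*\alpha}$ in $\widehat G$ by a coset-ring idempotent $\chi$ (Cohen's theorem), writing $p_*\alpha=\chi*(p_*\alpha)+(\delta_0-\chi)*(p_*\alpha)$ so that each factor has its transform confined to a part of $\widehat G$ on which the functional calculus keeps the spectrum equal to the closed range of the transform.

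The main obstacle is exactly the verification that no extra points survive in the spectrum of either summand — that the Wiener--Pitt phenomenon, which is the very reason $\mathcal N(G)$ is not closed under addition, does not reappear after the splitting. Controlling these potential spurious spectral values is where the structure of the compact group $G$ (its connected component and the profinite, i.e. torsion, part of $\widehat G$) must be used, and it is the reason a second full copy of $\mathcal N(G)$, rather than a mere $L^1(G)$ correction, is unavoidable in the compact setting. I expect this spectral-control step, and not the reductions or the transfer, to absorb essentially all of the difficulty.
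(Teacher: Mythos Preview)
The paper does not prove this theorem at all: it is quoted from \cite{hs} (Hatori--Sato) as background, alongside Hatori's non-compact result, solely to deduce that $\mathcal N(G)$ is not closed under addition. There is thus no ``paper's own proof'' to compare against.

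As for your proposal, it is an outline that stops exactly where the work begins. The reduction to $M_c(G)\subseteq\mathcal N(G)+\mathcal N(G)$ is correct and standard. But the transfer through $H=G\times\mathbb Z$ does not buy what you hope: you obtain $\mu_c=p_*\alpha+p_*f$ with $p_*f\in L^1(G)\subseteq M_{00}(G)$, and the only information you extract about $p_*\alpha$ is $\sigma(p_*\alpha)\subseteq\overline{\widehat\alpha(\widehat G\times\mathbb T)}$. That containment does not help you split $p_*\alpha$ into two natural pieces; indeed it is a constraint on the \emph{whole} spectrum, not on how the spectrum sits over any particular coset-ring piece of $\widehat G$. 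Consequently the idempotent splitting $p_*\alpha=\chi*p_*\alpha+(\delta_0-\chi)*p_*\alpha$ you propose uses nothing from the detour, and you could just as well have attempted it directly on $\mu_c$. The phrase ``so that each factor has its transform confined to a part of $\widehat G$ on which the functional calculus keeps the spectrum equal to the closed range of the transform'' is not an argument: you have not said which idempotent $\chi$ to take, nor why either summand avoids the Wiener--Pitt phenomenon. You yourself say this step ``absorbs essentially all of the difficulty'' --- which is to say your write-up contains the trivial reductions and a restatement of the problem, but no proof. The actual Hatori--Sato argument does not proceed via the non-compact case; if you want to complete this you should consult \cite{hs} directly.
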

Clearly, Theorems \ref{hat} and \ref{hats} imply that $\mathcal{N}(G)$ is not closed under addition unless $G$ is discrete as otherwise every measure would have a natural spectrum contradicting the existence of the Wiener--Pitt phenomenon.
\\
However, if we restrict our attention to measures in $M_{0}(G)$ for compact $G$ we have a positive result due to M. Zafran (\cite{Zafran}) on our disposal.
\begin{thm}[Zafran]\label{z}
Let $G$ be a compact Abelian group. Then $\mathcal{N}(G)\cap M_{0}(G)=M_{00}(G)$.
\end{thm}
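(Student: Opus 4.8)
The inclusion $M_{00}(G)\subseteq\mathcal{N}(G)\cap M_{0}(G)$ is already recorded in the discussion preceding the statement, since $M_{00}(G)\subset\mathcal{N}(G)$ and $M_{00}(G)\subset M_{0}(G)$. So the plan is to concentrate on the reverse inclusion $\mathcal{N}(G)\cap M_{0}(G)\subseteq M_{00}(G)$, which by the definition of $M_{00}(G)$ amounts to showing: for $\mu\in\mathcal{N}(G)\cap M_{0}(G)$ one has $\widehat{\mu}(\varphi)=0$ for every $\varphi\in\triangle(M(G))\setminus\widehat{G}$.

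First I would exploit compactness of $G$, which makes $\widehat{G}$ discrete, so that $\mu\in M_{0}(G)$ means $\widehat{\mu}(\gamma)\to 0$ along $\widehat{G}$. Consequently $\overline{\widehat{\mu}(\widehat{G})}=\widehat{\mu}(\widehat{G})\cup\{0\}$ with $0$ as its only possible accumulation point; in particular every nonzero point of this set is isolated. By naturality $\sigma(\mu)=\widehat{\mu}(\widehat{G})\cup\{0\}$, so every nonzero point of $\sigma(\mu)$ is isolated. Fixing $\varphi\notin\widehat{G}$, the value $\widehat{\mu}(\varphi)$ lies in $\sigma(\mu)$, and the whole task reduces to ruling out $\widehat{\mu}(\varphi)=c$ for a nonzero $c$.

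The heart of the argument, and the step I expect to be the main obstacle, is to convert such an isolated nonzero spectral value into a contradiction via holomorphic functional calculus. Since $c\neq 0$ is isolated in $\sigma(\mu)$, I would choose a function $f$ holomorphic on a neighbourhood of $\sigma(\mu)$ that equals $1$ on a small disc $D$ around $c$ with $D\cap\sigma(\mu)=\{c\}$ and equals $0$ near the (compact) remainder $\sigma(\mu)\setminus\{c\}$. Then $f^{2}=f$ near $\sigma(\mu)$, so $\nu:=f(\mu)$ is an idempotent element of $M(G)$ whose Gelfand transform is $f\circ\widehat{\mu}$ on all of $\triangle(M(G))$. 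Restricting to $\widehat{G}$ gives $\widehat{\nu}|_{\widehat{G}}=\mathbf{1}_{E}$, where $E=\{\gamma\in\widehat{G}:\widehat{\mu}(\gamma)=c\}$ is finite because $\widehat{\mu}\to 0$ and $c\neq 0$.

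Finally I would identify $\nu$ explicitly. The trigonometric polynomial obtained by summing the characters in $E$ (viewed as a density against Haar measure) has Fourier--Stieltjes transform equal to $\mathbf{1}_{E}$ on $\widehat{G}$ by orthogonality of characters; hence, by uniqueness of the Fourier--Stieltjes transform, $\nu$ coincides with it and so $\nu\in L^{1}(G)\subseteq M_{00}(G)$. Therefore $\widehat{\nu}(\varphi)=0$ for our $\varphi\notin\widehat{G}$. On the other hand $\widehat{\nu}(\varphi)=f(\widehat{\mu}(\varphi))=f(c)=1$, a contradiction. Thus $\widehat{\mu}(\varphi)=0$ for every $\varphi\notin\widehat{G}$, i.e. $\mu\in M_{00}(G)$, which completes the reverse inclusion and hence the proof.
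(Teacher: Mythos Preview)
The paper does not actually contain a proof of this statement; Theorem~\ref{z} is quoted as Zafran's result from \cite{Zafran} and used later (e.g.\ in the proof of Theorem~\ref{nato}) as a black box. So there is no ``paper's own proof'' to compare against.

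That said, your argument is correct and is essentially the standard proof going back to Zafran. The key points---that for $\mu\in M_0(G)$ with $G$ compact every nonzero value of $\widehat\mu$ on $\widehat G$ is isolated in $\overline{\widehat\mu(\widehat G)}=\sigma(\mu)$, that the Riesz idempotent $\nu=f(\mu)$ at such a value has finite Fourier support $E$, and that uniqueness of the Fourier--Stieltjes transform then forces $\nu$ to equal the trigonometric polynomial $\sum_{\gamma\in E}\gamma\in L^1(G)\subset M_{00}(G)$---are exactly the ingredients Zafran uses. The paper's Lemma~\ref{izo} (also attributed to Zafran, with a reference to a simplified proof in \cite{owa}) records a closely related consequence of the same functional-calculus mechanism, namely that isolated spectral values must lie in $\widehat\mu(\widehat G)$; your argument is the natural strengthening needed here, showing moreover that the associated spectral projection is absolutely continuous and hence annihilated by any $\varphi\notin\widehat G$.
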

It should be noted that no result of this type holds true for non-compact groups which follows easily from Hatori's theorem \ref{hat}.
\\
Since $\mathcal{N}(G)$ does not possess convenient algebraic structure we introduced in \cite{ow} the notion of \textit{spectrally reasonable measures}. As the present paper is a far reaching enhancement of the previous work we will refer to \cite{ow} frequently. Despite of the fact that the study in \cite{ow} was restricted to the circle group, the validity of basic results extends verbatim to arbitrary locally compact Abelian groups.
\begin{de}
Let $G$ be a locally compact Abelian group. A measure $\mu\in M(G)$ is said to be spectrally reasonable if $\mu+\nu\in \mathcal{N}(G)$ for every $\nu\in \mathcal{N}(G)$. The set of all such measures will be denoted by $\mathcal{S}(G)$.
\end{de}
We will cite two basic facts from \cite{ow} that are going to be used in the sequel (Lemma 6, Theorem 7 and Proposition 8 in the original paper).
\begin{lem}\label{rozl}
The spectrally reasonable measures have the following properties:
\begin{enumerate}
  \item If $\mu\in\mathcal{S}(G)$ and $\nu\in\mathcal{N}(G)$ then $\mu\ast\nu\in\mathcal{N}(G)$.
  \item If $\mu\in\mathcal{S}(G)$ and $\mu$ is invertible then $\mu^{-1}\in\mathcal{S}(G)$.
\end{enumerate}
\end{lem}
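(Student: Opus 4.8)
\emph{Proof plan.} The first thing I would record are the following facts used throughout. Taking $\nu=0$ in the definition shows $\mathcal S(G)\subseteq\mathcal N(G)$, and since $\widehat G\subseteq\triangle(M(G))$ one always has $\overline{\widehat\rho(\widehat G)}\subseteq\sigma(\rho)$, so membership in $\mathcal N(G)$ only ever requires the reverse inclusion. Next, $\mathcal N(G)$ is stable under scalar multiples, under adding $c\delta_0$, and under the holomorphic functional calculus: if $\rho\in\mathcal N(G)$ and $F$ is holomorphic near $\sigma(\rho)=\overline{\widehat\rho(\widehat G)}$ then $\widehat{F(\rho)}=F\circ\widehat\rho$ and, by compactness of $\sigma(\rho)$ and continuity of $F$, $\sigma(F(\rho))=F(\sigma(\rho))=\overline{\widehat{F(\rho)}(\widehat G)}$; in particular an invertible $\rho\in\mathcal N(G)$ has $\rho^{-1}\in\mathcal N(G)$. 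Finally I would rephrase spectral reasonableness as an invertibility criterion: $\mu\in\mathcal S(G)$ exactly when, for every $\rho\in\mathcal N(G)$, the measure $\mu+\rho$ is invertible as soon as $\inf_{\gamma}|\widehat\mu(\gamma)+\widehat\rho(\gamma)|>0$ (because $\mu+\rho\in\mathcal N(G)$, invertibility is equivalent to $0\notin\overline{(\widehat\mu+\widehat\rho)(\widehat G)}$).

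For (1) I would verify $\sigma(\mu\ast\nu)\subseteq\overline{(\widehat\mu\widehat\nu)(\widehat G)}$ by fixing $\lambda$ outside the right-hand side, so that $\widehat\mu\widehat\nu-\lambda$ is bounded away from $0$, and showing $\mu\ast\nu-\lambda\delta_0$ is invertible. When $\lambda=0$ the hypothesis forces both $\inf|\widehat\nu|>0$ and $\inf|\widehat\mu|>0$, so $\mu$ and $\nu$ are already invertible and nothing is to be done. The clean case is $\nu$ invertible: then $\nu^{-1}\in\mathcal N(G)$, hence $-\lambda\nu^{-1}\in\mathcal N(G)$, and the factorization $\mu\ast\nu-\lambda\delta_0=\nu\ast(\mu-\lambda\nu^{-1})$ reduces matters to inverting $\mu+(-\lambda\nu^{-1})$; since $\widehat\mu-\lambda/\widehat\nu=(\widehat\mu\widehat\nu-\lambda)/\widehat\nu$ is bounded away from $0$, the invertibility criterion above applies verbatim. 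The main obstacle is the remaining case, $\lambda\neq0$ with $\nu$ non-invertible, where this factorization is unavailable and $0$ may even lie in the interior of $\sigma(\nu)$; here I would try to approximate $\nu$ within $\mathcal N(G)$ by invertible natural measures and pass to the limit, the delicate point being to keep the resolvents under control. This is precisely where the finer structural analysis of $\mathcal N(G)$ from \cite{ow} is needed.

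Granting (1), part (2) is short. Fix $\nu\in\mathcal N(G)$; I want $\mu^{-1}+\nu\in\mathcal N(G)$. The identity $\mu^{-1}+\nu=\mu^{-1}\ast(\delta_0+\mu\ast\nu)$ together with invertibility of $\mu$ gives, for each $\lambda$, the equivalence $\lambda\in\sigma(\mu^{-1}+\nu)\iff-1\in\sigma\big(\mu\ast(\nu-\lambda\delta_0)\big)$. Since $\nu-\lambda\delta_0\in\mathcal N(G)$, part (1) makes $\mu\ast(\nu-\lambda\delta_0)$ a measure with natural spectrum, so the latter condition means there is a net $(\gamma_i)$ in $\widehat G$ with $\widehat\mu(\gamma_i)\big(\widehat\nu(\gamma_i)-\lambda\big)\to-1$. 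As $\widehat\mu$ and $\widehat\nu$ are bounded, the product tending to $-1$ forces $|\widehat\mu(\gamma_i)|$ to stay bounded away from $0$, whence $\widehat{\mu^{-1}}(\gamma_i)+\widehat\nu(\gamma_i)=1/\widehat\mu(\gamma_i)+\widehat\nu(\gamma_i)\to\lambda$. Thus every $\lambda\in\sigma(\mu^{-1}+\nu)$ lies in $\overline{\widehat{(\mu^{-1}+\nu)}(\widehat G)}$, and with the trivial reverse inclusion we conclude $\mu^{-1}+\nu\in\mathcal N(G)$. As $\nu$ was arbitrary, $\mu^{-1}\in\mathcal S(G)$.
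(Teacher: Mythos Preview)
The paper does not prove this lemma here; it is quoted from \cite{ow} (stated there as Lemma~6 and Proposition~8), so there is no in-paper argument to compare against.

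Your argument for (2) is correct, given (1). The gap is in (1): the case $\lambda\neq 0$ with $\nu$ non-invertible is not proved. You correctly flag it as the main obstacle, but then only gesture at an approximation scheme and defer to \cite{ow}. No approximation or ``finer structural analysis'' is needed; your own invertible-$\nu$ case already finishes the job once you add a one-line shift trick. Pick any $c\in\mathbb C$ with $|c|>\|\nu\|$. Then $\nu+c\delta_0\in\mathcal N(G)$ is invertible (Neumann series), so by your invertible case
\[
\mu\ast(\nu+c\delta_0)=\mu\ast\nu+c\mu\in\mathcal N(G).
\]
Now observe that $-c\mu\in\mathcal S(G)$: for $\rho\in\mathcal N(G)$ and $c\neq 0$ one has $-c\mu+\rho=-c\bigl(\mu-c^{-1}\rho\bigr)$, and $-c^{-1}\rho\in\mathcal N(G)$, so $\mu-c^{-1}\rho\in\mathcal N(G)$ and hence its scalar multiple is too. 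Applying $-c\mu\in\mathcal S(G)$ to the natural measure $\mu\ast\nu+c\mu$ gives $\mu\ast\nu\in\mathcal N(G)$ directly, with no limiting argument and no resolvent control.
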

\begin{thm}\label{pod}
Let $G$ be a locally compact Abelian group. Then $\mathcal{S}(G)$ is a closed, unital, symmetric Banach$^{\ast}$ subalgebra of $M(G)$. Moreover, $\widehat{G}$ is dense in $\triangle(\mathcal{S}(G))$.
\end{thm}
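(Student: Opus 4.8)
The plan is to verify the listed structural properties in order of increasing difficulty, reducing everything to two already-available facts: that $\mathcal{N}(G)$ is invariant under the affine maps $\nu\mapsto a\nu+b\delta_{0}$ ($a\neq0$) and under the involution, together with the inverse-closedness in Lemma \ref{rozl}(2). First I would record that $\mathcal{S}(G)\subseteq\mathcal{N}(G)$ (take $\nu=0$) and that $\mathcal{S}(G)$ is a linear subspace containing every $\lambda\delta_{0}$: closure under addition is immediate from the definition applied twice, while closure under scalar multiplication and the membership $\lambda\delta_{0}\in\mathcal{S}(G)$ follow by writing $a\mu+\nu=a(\mu+a^{-1}\nu)$ and $\nu+\lambda\delta_{0}$ and using that $\sigma(\cdot)$ and $\overline{\widehat{(\cdot)}(\widehat{G})}$ transform identically under these affine maps. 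The involution is handled the same way: since $\sigma(\widetilde{\nu})=\overline{\sigma(\nu)}$ while $\widehat{\widetilde{\nu}}=\overline{\widehat{\nu}}$ on $\widehat{G}$, conjugation gives $\nu\in\mathcal{N}(G)\iff\widetilde{\nu}\in\mathcal{N}(G)$, whence $\widetilde{\mu}+\nu=\widetilde{\mu+\widetilde{\nu}}\in\mathcal{N}(G)$ for $\mu\in\mathcal{S}(G)$. Symmetry is then free: a self-adjoint $\mu\in\mathcal{S}(G)\subseteq\mathcal{N}(G)$ has real-valued $\widehat{\mu}$ on $\widehat{G}$, so $\sigma(\mu)=\overline{\widehat{\mu}(\widehat{G})}\subseteq\mathbb{R}$.

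The cleanest route to norm-closedness of $\mathcal{S}(G)$ is to show first that $\mathcal{N}(G)$ itself is norm-closed; then, for $\mu_{n}\to\mu$ with $\mu_{n}\in\mathcal{S}(G)$ and any $\nu\in\mathcal{N}(G)$, the measures $\mu_{n}+\nu\to\mu+\nu$ stay in $\mathcal{N}(G)$, giving $\mu\in\mathcal{S}(G)$. For the closedness of $\mathcal{N}(G)$ I would use only that $\overline{\widehat{\mu}(\widehat{G})}\subseteq\sigma(\mu)$ is automatic and that the Gelfand transform is a contraction: given $\lambda=\widehat{\mu}(\varphi)\in\sigma(\mu)$ with $\varphi\in\triangle(M(G))$, the scalars $\widehat{\mu_{n}}(\varphi)\in\sigma(\mu_{n})=\overline{\widehat{\mu_{n}}(\widehat{G})}$ converge to $\lambda$, so one may pick $\gamma_{n}\in\widehat{G}$ with $\widehat{\mu_{n}}(\gamma_{n})$ within $1/n$ of $\widehat{\mu_{n}}(\varphi)$, and then $\widehat{\mu}(\gamma_{n})\to\lambda$ by contractivity, placing $\lambda$ in $\overline{\widehat{\mu}(\widehat{G})}$.

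Multiplicativity is the step I expect to be the main obstacle, since the defining property of $\mathcal{S}(G)$ is manifestly linear but not manifestly multiplicative; the idea is to manufacture powers by analytic functional calculus inside $\mathcal{S}(G)$. For $\mu\in\mathcal{S}(G)$ and $|\lambda|>\|\mu\|$ the element $\delta_{0}-\lambda^{-1}\mu\in\mathcal{S}(G)$ is invertible in $M(G)$, so by Lemma \ref{rozl}(2) its inverse $(\delta_{0}-\lambda^{-1}\mu)^{-1}=\sum_{k\geq0}\lambda^{-k}\mu^{\ast k}$ also lies in $\mathcal{S}(G)$. This is an $\mathcal{S}(G)$-valued analytic function whose Laurent coefficients are the powers $\mu^{\ast k}$; since $\mathcal{S}(G)$ is norm-closed, extracting a coefficient by the contour integral $\frac{1}{2\pi i}\oint_{|\lambda|=R}\lambda^{k-1}(\delta_{0}-\lambda^{-1}\mu)^{-1}\,d\lambda$, a norm-limit of Riemann sums lying in $\mathcal{S}(G)$, shows $\mu^{\ast k}\in\mathcal{S}(G)$. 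In particular $\mu^{\ast2}\in\mathcal{S}(G)$, and polarization $2\,\mu_{1}\ast\mu_{2}=(\mu_{1}+\mu_{2})^{\ast2}-\mu_{1}^{\ast2}-\mu_{2}^{\ast2}$ promotes this to closure under convolution.

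Finally, Lemma \ref{rozl}(2) shows that spectra are unchanged by the inclusion $\mathcal{S}(G)\hookrightarrow M(G)$: for $\mu\in\mathcal{S}(G)$ and $\lambda\notin\sigma_{M(G)}(\mu)$ the inverse of $\mu-\lambda\delta_{0}\in\mathcal{S}(G)$ stays in $\mathcal{S}(G)$, so $\sigma_{\mathcal{S}(G)}(\mu)=\sigma_{M(G)}(\mu)=\overline{\widehat{\mu}(\widehat{G})}$. To deduce density of the image of $\widehat{G}$ in $\triangle(\mathcal{S}(G))$ I would argue by contradiction: a functional $\varphi_{0}$ outside the weak$^{\ast}$-closure of $\widehat{G}$ is separated by finitely many $\mu_{1},\dots,\mu_{n}\in\mathcal{S}(G)$ and some $\delta>0$, and then $\nu:=\sum_{k}(\mu_{k}-\varphi_{0}(\mu_{k})\delta_{0})\ast\widetilde{(\mu_{k}-\varphi_{0}(\mu_{k})\delta_{0})}$ lies in $\mathcal{S}(G)$ by the previous paragraphs, is hermitian with $\widehat{\nu}\geq\delta^{2}$ on $\widehat{G}$, yet satisfies $\widehat{\nu}(\varphi_{0})=0$ because symmetry forces $\varphi_{0}(\widetilde{a})=\overline{\varphi_{0}(a)}$. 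This puts $0$ in $\sigma_{\mathcal{S}(G)}(\nu)$ but not in $\overline{\widehat{\nu}(\widehat{G})}$, contradicting the spectrum identity above and completing the argument.
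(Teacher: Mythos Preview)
The paper does not actually prove Theorem~\ref{pod}; it is quoted from \cite{ow} (``Lemma 6, Theorem 7 and Proposition 8 in the original paper'') without argument, so there is no in-paper proof to compare against. Your proposal is correct and self-contained, with one small ordering issue worth tightening: when you say ``Symmetry is then free'' and write $\sigma(\mu)=\overline{\widehat{\mu}(\widehat G)}\subset\mathbb{R}$, the $\sigma$ there is the spectrum in $M(G)$, whereas symmetry of the \emph{subalgebra} $\mathcal{S}(G)$ refers to $\sigma_{\mathcal{S}(G)}(\mu)$. That step only becomes legitimate once you have the spectral permanence $\sigma_{\mathcal{S}(G)}(\mu)=\sigma_{M(G)}(\mu)$, which you correctly derive from Lemma~\ref{rozl}(2) a paragraph later; since your density argument also needs the hermitian property $\varphi_0(\widetilde a)=\overline{\varphi_0(a)}$ (which is equivalent to symmetry of $\mathcal{S}(G)$ in its own spectrum), the logical order should be: spectrum identity first, then symmetry, then density. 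With that reshuffling the proof is complete. The resolvent/Cauchy-integral extraction of $\mu^{\ast k}$ to obtain multiplicative closure from Lemma~\ref{rozl}(2) and norm-closedness is clean and works exactly as you describe.
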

Let $\mu\in M(G)$ and let $f\in L^{1}(|\mu|)$. We will write $f\mu$ for the measure in $M(G)$ defined via
\begin{equation*}
f\mu(E)=\int_{E}fd\mu.
\end{equation*}
By the Radon-Nikodym theorem every measure $\nu$ absolutely continuous with respect to $\mu$ ($\nu\ll\mu$) is of this form. An import case is $f=\gamma_{0}\in \widehat{G}$ as $\widehat{\gamma_{0}\mu}(\gamma)=\widehat{\mu}(\gamma-\gamma_{0})$ for $\gamma\in\widehat{G}$. The connection of absolute continuity and the operation $\mu\mapsto\gamma_{0}\mu$ will be explained in terms of $L$-subspaces.
\begin{de}
Let $G$ be a locally compact Abelian group and let $X\subset M(G)$ be a closed linear subspace (subalgebra, ideal). Then $X$ is called $L$-subspace ($L$-subalgebra, $L$-ideal) if for every $\mu\in X$ and $\nu\ll\mu$ we have $\nu\in X$.
\end{de}
Being an $L$-subspace is equivalent to the invariance under the operation $\mu\mapsto\gamma_{0}\mu$ as the following (well-known) fact shows whose proofs is included for the readers convenience.
\begin{f}\label{ly}
Let $X$ be a closed subspace of $M(G)$. Then $X$ is an $L$-subspace iff for every $\mu\in X$ and $\gamma\in\widehat{G}$ we have $\gamma\mu\in X$.
\end{f}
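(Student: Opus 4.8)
The plan is to prove both implications, with the forward direction essentially immediate and the reverse direction resting on an approximation argument whose heart is a density statement. Throughout I keep $\mu \in X$ fixed and work in $L^{1}(|\mu|)$.

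First I would dispose of the easy implication. Suppose $X$ is an $L$-subspace, and let $\mu \in X$ and $\gamma \in \widehat{G}$. Since $|\gamma| \equiv 1$, the measure $\gamma\mu$ satisfies $|\gamma\mu| = |\mu|$, so in particular $\gamma\mu \ll \mu$; by the defining property of an $L$-subspace we get $\gamma\mu \in X$. This direction needs nothing beyond the observation that multiplication by a character does not enlarge the null sets.

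For the converse, assume $X$ is closed and invariant under $\mu \mapsto \gamma\mu$ for every $\gamma \in \widehat{G}$. Fix $\mu \in X$ and $\nu \ll \mu$, and use Radon--Nikodym to write $\nu = f\mu$ with $f \in L^{1}(|\mu|)$. For any finite linear combination of characters $g = \sum_{j} c_{j}\gamma_{j}$ the measure $g\mu = \sum_{j} c_{j}(\gamma_{j}\mu)$ lies in $X$, since $X$ is a subspace invariant under multiplication by characters. Moreover, since altering the density only changes the integrand,
\begin{equation*}
\|f\mu - g\mu\|_{M(G)} = \int_{G} |f-g|\,d|\mu| = \|f-g\|_{L^{1}(|\mu|)}.
\end{equation*}
Hence, if $f$ can be approximated arbitrarily well in $L^{1}(|\mu|)$ by such trigonometric polynomials, then $\nu = f\mu$ is a norm limit of elements of $X$ and therefore belongs to the closed subspace $X$.

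It remains to establish that the linear span of $\{\gamma : \gamma \in \widehat{G}\}$ is dense in $L^{1}(|\mu|)$, which is the only genuine obstacle; everything else is bookkeeping. By the Hahn--Banach theorem it suffices to show that the sole $h \in L^{\infty}(|\mu|) = L^{1}(|\mu|)^{\ast}$ annihilating every character is $h = 0$. If $\int_{G} \gamma h \, d|\mu| = 0$ for all $\gamma \in \widehat{G}$, then the Fourier--Stieltjes transform of the finite complex measure $h|\mu|$ vanishes identically (using that $\widehat{G}$ is closed under conjugation), so by the uniqueness theorem for Fourier--Stieltjes transforms (see \cite{r}) we obtain $h|\mu| = 0$, that is, $h = 0$ $|\mu|$-almost everywhere. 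This yields the density claim and completes the proof.
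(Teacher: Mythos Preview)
Your proof is correct and follows essentially the same route as the paper's: both arguments reduce the reverse implication to the density of the span of characters in $L^{1}(|\mu|)$ via Hahn--Banach and the uniqueness theorem for Fourier--Stieltjes transforms. Your version is slightly more explicit in phrasing this as an approximation of the Radon--Nikodym density by trigonometric polynomials, whereas the paper applies Hahn--Banach directly to separate $f\mu$ from $X\cap L^{1}(|\mu|)$, but the substance is identical.
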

\begin{proof}
The first implication is clear, so let us assume that $X$ is a closed subspace such that for every $\mu\in X$ and $\gamma\in\widehat{G}$ we have $\gamma\mu\in X$. Suppose that there exists $\mu\in X$ and $f\in L^{1}(|\mu|)$ with $f\mu\notin X$. By Hahn-Banach theorem there is $\psi\in \left(L^{1}(|\mu|)\right)^{\ast}=L^{\infty}(|\mu|)$ satisfying $\psi|_{X\cap L^{1}(|\mu|)}=0$ and $\psi(f\mu)\neq 0$. It follows that
\begin{equation*}
\int_{G}\gamma\psi d|\mu|=0\text{ for every }\gamma\in\widehat{G}.
\end{equation*}
This implies, by the uniqueness theorem, that $\psi|\mu|=0$. Hence $\psi$ is a zero element of $L^{\infty}(|\mu|)$ contradicting $\psi(f\mu)\neq 0$.
\end{proof}
We will apply this fact to $\mathcal{S}(G)$.
\begin{prop}\label{lpod}
Let $G$ be a locally compact Abelian group. Then $\mathcal{S}(G)$ is an $L$-subalgebra of $M(G)$.
\end{prop}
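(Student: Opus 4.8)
The plan is to reduce everything to Fact \ref{ly}. By Theorem \ref{pod} we already know that $\mathcal{S}(G)$ is a closed subalgebra of $M(G)$, so in order to conclude that it is an $L$-subalgebra it suffices, by Fact \ref{ly}, to verify the single modulation-invariance property: for every $\mu\in\mathcal{S}(G)$ and every $\gamma_0\in\widehat{G}$ one has $\gamma_0\mu\in\mathcal{S}(G)$. Once this is established, Fact \ref{ly} upgrades the closed subalgebra $\mathcal{S}(G)$ to an $L$-subspace, and being simultaneously a subalgebra it is an $L$-subalgebra, as claimed.

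The core of the argument is the observation that the modulation operator $T_{\gamma_0}\colon\mu\mapsto\gamma_0\mu$ is an isometric, unital algebra automorphism of $M(G)$ that maps $\mathcal{N}(G)$ onto itself. It is an isometry because $|\gamma_0|\equiv 1$ gives $|\gamma_0\mu|=|\mu|$, and it is multiplicative precisely because $\gamma_0$ is a character: testing on point masses, $(\gamma_0\delta_a)\ast(\gamma_0\delta_b)=\gamma_0(a)\gamma_0(b)\,\delta_{a+b}=\gamma_0(a+b)\,\delta_{a+b}=\gamma_0(\delta_a\ast\delta_b)$, which extends by linearity and continuity (equivalently, one checks $\widehat{\gamma_0(\mu\ast\nu)}(\gamma)=\widehat{\mu}(\gamma-\gamma_0)\widehat{\nu}(\gamma-\gamma_0)=\widehat{(\gamma_0\mu)\ast(\gamma_0\nu)}(\gamma)$). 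Its inverse is $T_{\overline{\gamma_0}}$ and it fixes the unit, since $\gamma_0\delta_0=\delta_0$. Because an algebra automorphism preserves invertibility and $T_{\gamma_0}$ leaves each scalar multiple $\lambda\delta_0$ of the unit fixed, it follows that $\gamma_0\nu-\lambda\delta_0=\gamma_0(\nu-\lambda\delta_0)$ is invertible iff $\nu-\lambda\delta_0$ is, whence $\sigma(\gamma_0\nu)=\sigma(\nu)$. On the Fourier side, $\widehat{\gamma_0\nu}(\gamma)=\widehat{\nu}(\gamma-\gamma_0)$ and the translation $\gamma\mapsto\gamma-\gamma_0$ is a bijection of $\widehat{G}$, so $\overline{\widehat{\gamma_0\nu}(\widehat{G})}=\overline{\widehat{\nu}(\widehat{G})}$. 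Combining these two identities yields $\gamma_0\nu\in\mathcal{N}(G)\iff\nu\in\mathcal{N}(G)$.

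With this invariance in hand the claim is immediate. Given $\mu\in\mathcal{S}(G)$ and an arbitrary $\nu\in\mathcal{N}(G)$, I would write $\gamma_0\mu+\nu=\gamma_0\bigl(\mu+\overline{\gamma_0}\nu\bigr)$, using the pointwise identity $\gamma_0\overline{\gamma_0}\equiv 1$. By the preceding paragraph $\overline{\gamma_0}\nu\in\mathcal{N}(G)$, and since $\mu$ is spectrally reasonable we get $\mu+\overline{\gamma_0}\nu\in\mathcal{N}(G)$; applying $T_{\gamma_0}$ once more keeps us inside $\mathcal{N}(G)$. Hence $\gamma_0\mu+\nu\in\mathcal{N}(G)$ for every $\nu\in\mathcal{N}(G)$, that is, $\gamma_0\mu\in\mathcal{S}(G)$. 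I do not anticipate a genuine obstacle here; the only point deserving real care is the spectrum-preservation $\sigma(\gamma_0\nu)=\sigma(\nu)$, which relies on $T_{\gamma_0}$ being an honest algebra automorphism fixing the scalar multiples of the unit rather than merely a bounded linear bijection.
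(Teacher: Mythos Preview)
Your proof is correct and follows essentially the same route as the paper: reduce via Fact \ref{ly} to showing $\gamma_0\mu\in\mathcal{S}(G)$, observe that $\mu\mapsto\gamma_0\mu$ is a unital algebra automorphism of $M(G)$ preserving both the spectrum and the image of the Fourier--Stieltjes transform, hence preserving $\mathcal{N}(G)$, and conclude. You simply spell out in detail (the automorphism property, spectrum preservation, the explicit identity $\gamma_0\mu+\nu=\gamma_0(\mu+\overline{\gamma_0}\nu)$) what the paper compresses into a single sentence.
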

\begin{proof}
In view of Fact \ref{ly} it is enough to show that for every $\mu\in\mathcal{S}(G)$ and $\gamma\in\widehat{G}$ we have $\gamma\mu\in\mathcal{S}(G)$. In order to prove this let us make the following observation: the mapping $\mu\mapsto\gamma_{0}\mu$ for a fixed $\gamma_{0}\in\widehat{G}$ is an automorphism of $M(G)$ preserving the image of the Fourier-Stieltjes transform. Therefore $\nu\in\mathcal{N}(G)$ iff $\gamma\nu\in\mathcal{N}(G)$ for every $\gamma\in\widehat{G}$ which finishes the proof of the proposition in an obvious way.
\end{proof}
In the light of the definition of the naturality of the spectrum and the prominent role of the class $M_{00}(G)$ it seems that the set of all measures $\mu\in M(G)$ such that $\widehat{\mu}(\varphi)=0$ for all $\varphi\notin\overline{\widehat{G}}$ should be of particular interest in this area. Quite surprisingly, relaying on the deep result from \cite{phg} these two sets coincide.
\begin{thm}
Let $\mu\in M(G)$. If $\widehat{\mu}(\varphi)=0$ for every $\varphi\notin\overline{\widehat{G}}$ then $\mu\in M_{00}(G)$.
\end{thm}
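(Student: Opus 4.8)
The plan is to pass everything to the Gelfand transform $\widehat{\mu}$, regarded as a continuous function on the compact space $\triangle(M(G))$, and to reduce the assertion to a statement about the behaviour of $\widehat{\mu}$ on the boundary set $\overline{\widehat{G}}\setminus\widehat{G}$. The hypothesis already gives $\widehat{\mu}=0$ on the open set $\triangle(M(G))\setminus\overline{\widehat{G}}$, and $M_{00}(G)$ is exactly the set of measures whose Gelfand transform vanishes on all of $\triangle(M(G))\setminus\widehat{G}$, so the only thing left to prove is that $\widehat{\mu}$ vanishes on $\overline{\widehat{G}}\setminus\widehat{G}$.

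First I would record two soft facts. The subset $\widehat{G}$ is open in $\triangle(M(G))$: a functional $\varphi$ lies in $\widehat{G}$ precisely when $\varphi|_{L^{1}(G)}\neq 0$ (because $L^{1}(G)$ is a closed ideal and $\triangle(L^{1}(G))=\widehat{G}$), and the condition $\varphi|_{L^{1}(G)}\neq 0$ describes the open set $\bigcup_{f\in L^{1}(G)}\{\varphi:\varphi(f)\neq 0\}$. Consequently $\overline{\widehat{G}}\setminus\widehat{G}$ is closed, and since the relative topology that $\widehat{G}$ inherits from $\triangle(M(G))$ is its usual dual-group topology, every point of $\overline{\widehat{G}}\setminus\widehat{G}$ is a weak$^{\ast}$-limit of a net $(\gamma_{\alpha})\subset\widehat{G}$ that leaves every compact subset of $\widehat{G}$. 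Using continuity of $\widehat{\mu}$ this yields the clean reformulation $\widehat{\mu}|_{\overline{\widehat{G}}\setminus\widehat{G}}=0\iff\mu\in M_{0}(G)$; hence the whole theorem is equivalent to the implication that if $\widehat{\mu}$ vanishes off $\overline{\widehat{G}}$ then $\mu\in M_{0}(G)$.

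The genuine difficulty, and the point where the deep result of \cite{phg} enters, is that the region $\triangle(M(G))\setminus\overline{\widehat{G}}$ controlled by the hypothesis is topologically separated from the boundary $\overline{\widehat{G}}\setminus\widehat{G}$ on which we must conclude vanishing, so no mere continuity argument can bridge them. I would invoke \cite{phg} to supply exactly this bridge: it shows that the asymptotic values of $\widehat{\mu}$ along $\widehat{G}$ (equivalently, the values of $\widehat{\mu}$ on $\overline{\widehat{G}}\setminus\widehat{G}$) are already reflected by the values of $\widehat{\mu}$ on $\triangle(M(G))\setminus\overline{\widehat{G}}$; in topological terms this amounts to the inclusion $\overline{\widehat{G}}\setminus\widehat{G}\subseteq\overline{\triangle(M(G))\setminus\overline{\widehat{G}}}$, that is, $\widehat{G}$ coincides with the interior of its closure.

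Granting this, the proof closes at once. The hypothesis gives $\widehat{\mu}=0$ on $\triangle(M(G))\setminus\overline{\widehat{G}}$; continuity of $\widehat{\mu}$ then forces $\widehat{\mu}=0$ on the closure $\overline{\triangle(M(G))\setminus\overline{\widehat{G}}}$, which by the deep result contains $\overline{\widehat{G}}\setminus\widehat{G}$. Thus $\widehat{\mu}=0$ on all of $\triangle(M(G))\setminus\widehat{G}$, i.e.\ $\mu\in M_{00}(G)$. I expect the entire obstacle to be concentrated in justifying the bridging inclusion, which is precisely why a substantial external input such as \cite{phg} is unavoidable; the reduction and the final continuity step are routine.
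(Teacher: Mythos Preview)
Your argument is correct and follows essentially the same route as the paper's proof: both let the hypothesis give $\widehat{\mu}=0$ on $\triangle(M(G))\setminus\overline{\widehat{G}}$, pass to the closure by continuity, and then invoke \cite{phg} to conclude that this closure already contains all of $\triangle(M(G))\setminus\widehat{G}$. The only difference is that the paper routes the last step explicitly through the \v{S}ilov boundary: it uses the well-known inclusion $\overline{\widehat{G}}\subset\partial(M(G))$ and then the literal statement of \cite{phg}, namely $\mathrm{Int}(\partial(M(G)))=\widehat{G}$, to deduce the inclusion you use directly, $\mathrm{Int}(\overline{\widehat{G}})=\widehat{G}$; you should spell out this small bridge, since \cite{phg} is phrased in terms of $\partial(M(G))$ rather than $\overline{\widehat{G}}$.
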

\begin{proof}
Let $X:=\{\varphi\in\triangle(M(G)):\widehat{\mu}(\varphi)=0\}\subset\triangle(M(G))$. By our assumption $\triangle(M(G))\setminus\overline{\widehat{G}}\subset X$. Clearly, $X$ is closed which gives $\overline{\triangle(M(G))\setminus\overline{\widehat{G}}}\subset X$. Let $\partial(M(G))$ denote the \v{S}ilov boundary\footnote{The \v{S}ilov boundary of a commutative Banach algebra $A$ is the smallest closed subset $E$ of $\triangle(A)$ such that $\sup\{|\widehat{x}(\varphi)|:\varphi\in E\}=r(x)$ for every $x\in A$} of $M(G)$. It is well-known that $\overline{\widehat{G}}\subset\partial(M(G))$ leading to
\begin{equation}\label{zaw}
\overline{\triangle(M(G))\setminus\partial(M(G))}\subset X \text{ and }\triangle(M(G))\setminus X\subset \triangle(M(G))\setminus\overline{\triangle(M(G))\setminus\partial(M(G))}.
\end{equation}
A standard formula from general topology implies
\begin{equation*}
\triangle(M(G))\setminus\overline{\triangle(M(G))\setminus\partial(M(G))}=\mathrm{Int}(\partial(M(G))).
\end{equation*}
However, by the titled result from \cite{phg} we have $\mathrm{Int}(\partial(M(G)))=\widehat{G}$. Now $(\ref{zaw})$ provides $\triangle(M(G))\setminus\widehat{G}\subset X$ and finishes the proof.

\end{proof}
\begin{rem}
As the problem of the naturality of the spectrum is meaningful only for non-discrete groups we always assume that $G$ is a locally compact non-discrete group.
\end{rem}
\section{Measures in $M_{0}$}
In this section we will prove the following dichotomy: $\mathcal{S}(G)\cap M_{0}(G)=\{0\}$ or $\mathcal{S}(G)\cap M_{0}(G)=M_{00}(G)$. The first case occurs for non-compact groups and the second for compact ones. We start with the latter case.
\subsection{Compact groups}
Let us recall first Lemma 2.2 from \cite{Zafran} (in fact we use only a part of the original lemma and the proof can be then vastly simplified - check for example Lemma 4.2 in \cite{owa}).
\begin{lem}\label{izo}
Let $G$ be a compact Abelian group and let $\mu\in M(G)$. If $\lambda$ is an isolated point of $\sigma(\mu)$ then $\lambda\in\widehat{\mu}(\widehat{G})$.
\end{lem}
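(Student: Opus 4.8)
The plan is to manufacture, from the isolated point $\lambda$, an idempotent measure whose Fourier--Stieltjes transform detects exactly the characters at which $\widehat{\mu}$ takes the value $\lambda$, and then to show that this idempotent cannot be trivial on $\widehat{G}$.

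First I would invoke the Riesz holomorphic functional calculus in the unital Banach algebra $M(G)$ (with unit $\delta_{0}$). Since $\lambda$ is isolated in $\sigma(\mu)$, the singleton $\{\lambda\}$ is relatively clopen in $\sigma(\mu)$, so there is a function $f$, holomorphic on an open neighbourhood of $\sigma(\mu)$ that splits into a piece around $\lambda$ and a piece around $\sigma(\mu)\setminus\{\lambda\}$, with $f\equiv 1$ near $\lambda$ and $f\equiv 0$ near the rest. Setting $p:=f(\mu)\in M(G)$ one gets $p\ast p=p$, because $f^{2}=f$ on that neighbourhood and the functional calculus is multiplicative. By the spectral mapping theorem $\sigma(p)=f(\sigma(\mu))\ni f(\lambda)=1$, so $p$ is a nonzero idempotent measure.

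Next I would transport this to the Gelfand side. For every $\varphi\in\triangle(M(G))$ the functional calculus yields $\widehat{p}(\varphi)=f(\widehat{\mu}(\varphi))$; since $\widehat{\mu}(\varphi)\in\sigma(\mu)$ and $\lambda$ is isolated, this forces $\widehat{p}(\varphi)=1$ when $\widehat{\mu}(\varphi)=\lambda$ and $\widehat{p}(\varphi)=0$ otherwise. In particular, for a character $\gamma\in\widehat{G}$ one has $\widehat{p}(\gamma)\in\{0,1\}$ (which also follows directly from $p\ast p=p$), and $\widehat{p}(\gamma)=1$ holds precisely when $\widehat{\mu}(\gamma)=\lambda$.

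It remains to find a character at which $\widehat{p}$ equals $1$. Here I would use the injectivity of the Fourier--Stieltjes transform on $M(G)$: since $p\neq 0$, its transform cannot vanish identically on $\widehat{G}$, so there exists $\gamma_{0}\in\widehat{G}$ with $\widehat{p}(\gamma_{0})\neq 0$, hence $\widehat{p}(\gamma_{0})=1$. By the previous step this gives $\widehat{\mu}(\gamma_{0})=\lambda$, so $\lambda\in\widehat{\mu}(\widehat{G})$. The only genuinely delicate point is this last step: the idempotent supplied by the functional calculus lives a priori on the large Gelfand space $\triangle(M(G))$, and what must be excluded is that the clopen set on which it equals $1$ avoids the comparatively small subset $\widehat{G}$; this is exactly what the uniqueness theorem for Fourier--Stieltjes transforms rules out.
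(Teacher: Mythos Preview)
Your argument is correct and is precisely the standard simplified proof the paper alludes to (it does not reprove the lemma itself but cites Zafran and points to Lemma~4.2 in \cite{owa} for this streamlined version). Note incidentally that your proof nowhere uses the compactness of $G$: the uniqueness theorem for the Fourier--Stieltjes transform is valid on any locally compact Abelian group, so the statement holds in that generality.
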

Using Lemma \ref{izo} and Zafran's theorem (Theorem \ref{z}) we will prove now that $S(G)\cap M_{0}(G)=M_{00}(G)$. This result can be found in \cite{ow} but proved in greater generality with more sophisticated methods.
\begin{thm}\label{nato}
Let $G$ be a compact Abelian group. Then $\mathcal{S}(G)\cap M_{0}(G)=M_{00}(G)$.
\end{thm}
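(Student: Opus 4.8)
The plan is to prove the two inclusions separately, one of which is immediate from Zafran's theorem while the other carries all the content. For $\mathcal{S}(G)\cap M_{0}(G)\subseteq M_{00}(G)$, first note that $\mathcal{S}(G)\subseteq\mathcal{N}(G)$: the zero measure lies in $\mathcal{N}(G)$, so for any $\mu\in\mathcal{S}(G)$ we have $\mu=\mu+0\in\mathcal{N}(G)$. Hence $\mathcal{S}(G)\cap M_{0}(G)\subseteq\mathcal{N}(G)\cap M_{0}(G)=M_{00}(G)$, the last equality being exactly Zafran's Theorem \ref{z}. This disposes of one inclusion with no further work.

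For the reverse inclusion, since $M_{00}(G)\subseteq M_{0}(G)$ by definition it suffices to show $M_{00}(G)\subseteq\mathcal{S}(G)$; that is, I fix $\mu\in M_{00}(G)$ and $\nu\in\mathcal{N}(G)$ and prove $\mu+\nu\in\mathcal{N}(G)$. As $\overline{\widehat{(\mu+\nu)}(\widehat{G})}\subseteq\sigma(\mu+\nu)$ always holds, I only need $\sigma(\mu+\nu)\subseteq\overline{\widehat{(\mu+\nu)}(\widehat{G})}$. Here the defining property of $M_{00}(G)$ enters: for every $\varphi\in\triangle(M(G))\setminus\widehat{G}$ we have $\widehat{\mu}(\varphi)=0$, so $\widehat{(\mu+\nu)}(\varphi)=\widehat{\nu}(\varphi)$. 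Thus each $\lambda\in\sigma(\mu+\nu)$ is either attained at some $\gamma\in\widehat{G}$, in which case it trivially lies in $\overline{\widehat{(\mu+\nu)}(\widehat{G})}$, or it equals $\widehat{\nu}(\varphi)$ for some $\varphi\notin\widehat{G}$, which is the only case requiring attention.

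The heart of the argument is the claim that for $\varphi\notin\widehat{G}$ the value $\widehat{\nu}(\varphi)$ lies in the cluster set of $\widehat{\nu}$ at infinity, $R_{\infty}(\nu):=\bigcap_{F}\overline{\widehat{\nu}(\widehat{G}\setminus F)}$, the intersection running over finite $F\subseteq\widehat{G}$ (note $\widehat{G}$ is discrete, as $G$ is compact). Granting this, the proof concludes quickly: since $\mu\in M_{0}(G)$, for every $\varepsilon>0$ there is a finite $F$ with $|\widehat{\mu}(\gamma)|<\varepsilon$ for $\gamma\notin F$; choosing $\gamma\in\widehat{G}\setminus F$ with $|\widehat{\nu}(\gamma)-\widehat{\nu}(\varphi)|<\varepsilon$ yields $|\widehat{(\mu+\nu)}(\gamma)-\widehat{\nu}(\varphi)|<2\varepsilon$, so $\widehat{\nu}(\varphi)\in\overline{\widehat{(\mu+\nu)}(\widehat{G})}$. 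Combining the two cases gives $\sigma(\mu+\nu)\subseteq\overline{\widehat{(\mu+\nu)}(\widehat{G})}$, hence $\mu+\nu\in\mathcal{N}(G)$.

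To establish the claim I would argue by contradiction, and this is where I expect the main obstacle to lie, namely controlling the fiber of $\widehat{\nu}$ over a spectral value. Since $\nu\in\mathcal{N}(G)$ we have $c:=\widehat{\nu}(\varphi)\in\sigma(\nu)=\overline{\widehat{\nu}(\widehat{G})}$. If $c\notin R_{\infty}(\nu)$, then values of $\widehat{\nu}$ close to $c$ occur only for finitely many characters, which forces $c$ to be an \emph{isolated} point of $\sigma(\nu)$ whose fiber $\widehat{\nu}^{-1}(c)\cap\widehat{G}$ is finite. By Lemma \ref{izo}, $c\in\widehat{\nu}(\widehat{G})$; moreover the Riesz idempotent $p$ attached to the isolated point $c$ satisfies $\widehat{p}=\mathbf{1}_{\widehat{\nu}^{-1}(c)}$ on $\triangle(M(G))$, while the measure $q$ with density $\sum_{\gamma\in\widehat{\nu}^{-1}(c)\cap\widehat{G}}\gamma$ with respect to the Haar measure $m_{G}$ lies in $M_{00}(G)$ and has $\widehat{q}=\mathbf{1}_{\widehat{\nu}^{-1}(c)\cap\widehat{G}}$, vanishing off $\widehat{G}$. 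As $\widehat{p}$ and $\widehat{q}$ agree on $\widehat{G}$, injectivity of the Fourier--Stieltjes transform forces $p=q$, whence $\widehat{\nu}^{-1}(c)=\widehat{\nu}^{-1}(c)\cap\widehat{G}\subseteq\widehat{G}$, contradicting $\varphi\in\widehat{\nu}^{-1}(c)$ with $\varphi\notin\widehat{G}$. Therefore $c\in R_{\infty}(\nu)$, which proves the claim and completes the proof.
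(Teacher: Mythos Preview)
Your proof is correct, and the overall strategy coincides with the paper's for the easy inclusion (via Zafran's theorem) but diverges for the substantial one. The paper argues directly on $\sigma(\mu+\nu)$: it splits according to whether $\lambda\in\sigma(\mu+\nu)$ is isolated or an accumulation point, invokes Lemma~\ref{izo} for $\mu+\nu$ in the isolated case, and in the accumulation case passes to $\sigma(\nu)=\overline{\widehat{\nu}(\widehat{G})}$ to produce infinitely many distinct characters $\gamma_n$ with $\widehat{\nu}(\gamma_n)\to\lambda$, so that $\widehat{\mu}(\gamma_n)\to 0$ by $\mu\in M_0(G)$. You instead isolate a standalone statement about $\nu$ alone, namely that every value $\widehat{\nu}(\varphi)$ with $\varphi\notin\widehat{G}$ lies in the cluster set $R_\infty(\nu)$, and prove it by a Riesz-idempotent argument that is in effect a self-contained strengthening of Lemma~\ref{izo} (not only is an isolated spectral value attained on $\widehat{G}$, but its entire Gelfand fibre lies in $\widehat{G}$). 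What your route buys is a clean auxiliary fact of independent interest and a proof that does not need to case-split on $\sigma(\mu+\nu)$; what the paper's route buys is brevity, since it quotes Lemma~\ref{izo} as a black box rather than unpacking the idempotent argument. Both rest on the same functional-calculus idea, so the difference is one of organisation rather than of essential content.
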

\begin{proof}
As $\mathcal{S}(G)\subset\mathcal{N}(G)$ the Zafran's theorem imply the inclusion $\mathcal{S}(G)\cap M_{0}(G)\subset M_{00}(G)$.
\\
For the reverse inclusion, let us take $\mu\in M_{00}(G)$, $\nu\in\mathcal{N}(G)$ and consider $\lambda\in\sigma(\mu+\nu)$. If $\lambda$ is an isolated point of $\sigma(\mu+\nu)$ then we are done by Lemma \ref{izo}. In case of $\lambda$ being an accumulation point of $\sigma(\mu+\nu)$ let us take a sequence of distinct complex numbers $\lambda_{k}\in\sigma(\mu+\nu)$ such that $\lambda_{k}\xrightarrow[k\rightarrow\infty]{}\lambda$ and let $\varphi_{k}\in\triangle(M(G))$ satisfy $\varphi_{k}(\mu+\nu)=\lambda_{k}$. Without loss of generality, we are allowed to assume $\varphi_{k}\notin\widehat{G}$ for every $k\in\mathbb{N}$. By the definition of $M_{00}(G)$ we have $\varphi_{k}(\mu+\nu)=\varphi_{k}(\nu)$ for every $k\in\mathbb{N}$. It follows that $\lambda$ is an accumulation point of $\sigma(\nu)=\overline{\widehat{\nu}(\widehat{G})}$. Let $\gamma_{n}\in\widehat{G}$ be a sequence of characters for which $\widehat{\nu}(\gamma_{n})\xrightarrow[n\rightarrow\infty]{}\lambda$. Then, as $\mu\in M_{00}(G)\subset M_{0}(G)$ we clearly have $(\widehat{\mu}+\widehat{\nu})(\gamma_{n})\xrightarrow[n\rightarrow\infty]{}\lambda$ and the argument is finished.
\end{proof}
\subsection{Non-compact groups}
We will deal first with absolutely continuous measures. To start the discussion we note that $\mathcal{S}(G)\cap L^{1}(G)$ is an $L$-subspace of $L^{1}(G)$ (compare with Proposition \ref{lpod}). $L$-subspaces of $L^{1}(G)$ has been characterized in \cite{s} leading to the following form of $\mathcal{S}(G)\cap L^{1}(G)$:
\begin{equation}\label{rozo}
\mathcal{S}(G)\cap L^{1}(G)=\chi_{H}L^{1}(G)\text{ for some measurable $H\subset G$}.
\end{equation}
We remark here that the set $H$ is defined up to a locally null set (with respect to the Haar measure on $G$) and all equalities and inclusions are meant in the same fashion.
\\
By Hatori's theorem (Theorem \ref{hat}), $H\neq G$. We will argue by contradiction now so let us suppose that $H$ is of positive Haar measure.
\\
It is straightforward to verify the statement: $f\in \mathcal{S}(G)\cap L^{1}(G)$ iff $\widetilde{f}\in\mathcal{S}(G)\cap L^{1}(G)$ implying $H=-H$. Moreover, as $\mathcal{S}(G)\cap L^{1}(G)$ is an algebra, we have $H+H\subset H$ by the characterisation of 'vanishing subalgebras' (here we use the result of T. Liu from \cite{l}, for greater generality check \cite{sh} and completely new approach in \cite{gh}). Now, by Steinhaus' theorem (check \cite{st}) $H$ is an open subgroup of $G$. There are two essentially different situations to be considered at this point.
\\
\underline{Case 1}: $H$ is non-compact.
\\
The proof in this case will be a simply application of the following lemma.
\begin{lem}\label{odwn}
Let $H$ be a closed subgroup of a locally compact Abelian group $G$ and let $\mu\in M(H)$. Then $\sigma_{M(H)}(\mu)=\sigma_{M(G)}(\mu)$\footnote{The first set is the spectrum of $\mu$ in $M(H)$ and the second is the spectrum of $\mu$ treated as an element in $M(G)$.}.
\end{lem}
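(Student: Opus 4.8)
The plan is to realise $M(H)$ as a closed, unital subalgebra of $M(G)$ by extending each measure on the closed (hence Borel) set $H$ by zero; since $H$ is a subgroup, its group operation is the restriction of that of $G$, so this extension is an isometric $*$-homomorphism carrying convolution to convolution and the unit $\delta_{0}$ of $M(H)$ to the unit $\delta_{0}$ of $M(G)$. With this identification the inclusion $\sigma_{M(G)}(\mu)\subset\sigma_{M(H)}(\mu)$ is immediate: if $\mu-\lambda\delta_{0}$ is invertible in $M(H)$, then its inverse lies in $M(H)\subset M(G)$ and witnesses invertibility in $M(G)$. The whole content of the lemma is therefore the reverse inclusion, i.e.\ the \emph{inverse-closedness} of $M(H)$ in $M(G)$: whenever $\mu-\lambda\delta_{0}$ is invertible in $M(G)$, its inverse $\nu$ already belongs to $M(H)$.

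To prove this, fix $\lambda\notin\sigma_{M(G)}(\mu)$ and let $\nu\in M(G)$ satisfy $(\mu-\lambda\delta_{0})\ast\nu=\delta_{0}$. Write $H^{\perp}=\{\chi\in\widehat{G}:\chi|_{H}\equiv 1\}$ for the annihilator of $H$. Because $\mu$ is concentrated on $H$, its transform $\widehat{\mu}$ is constant on cosets of $H^{\perp}$, and hence so is $\widehat{\mu}-\lambda=\widehat{\mu-\lambda\delta_{0}}$. Restricting the convolution identity to $\widehat{G}$ gives $(\widehat{\mu}-\lambda)\widehat{\nu}\equiv 1$, so $\widehat{\nu}=(\widehat{\mu}-\lambda)^{-1}$ is $H^{\perp}$-periodic as well: $\widehat{\nu}(\gamma-\chi)=\widehat{\nu}(\gamma)$ for every $\gamma\in\widehat{G}$ and $\chi\in H^{\perp}$. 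Invoking the relation $\widehat{\chi\nu}(\gamma)=\widehat{\nu}(\gamma-\chi)$ together with the uniqueness theorem (exactly as in the proof of Fact \ref{ly}), this periodicity is equivalent to $\chi\nu=\nu$ for all $\chi\in H^{\perp}$. It then remains to show that a measure fixed by every character from $H^{\perp}$ must be concentrated on $H$.

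The final step is where the only real difficulty lies, since $H^{\perp}$ may be uncountable and the natural identity $H=\bigcap_{\chi\in H^{\perp}}\{\chi=1\}=(H^{\perp})^{\perp}$ (Pontryagin duality) exhibits $H$ as an uncountable intersection, so the a.e.\ statements for individual characters cannot simply be combined. I would argue as follows. For a single $\chi\in H^{\perp}$ the equality $\chi\nu=\nu$ says that the measure $(\chi-1)\nu$ vanishes, whence $\chi=1$ holds $|\nu|$-almost everywhere; consequently the open set $U_{\chi}=\{x\in G:\chi(x)\neq 1\}$ is $|\nu|$-null. As $\chi$ ranges over $H^{\perp}$ these open sets cover $G\setminus H$, because any $x\notin H=(H^{\perp})^{\perp}$ is separated from the identity by some $\chi\in H^{\perp}$. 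Finally, inner regularity of the finite measure $|\nu|$ together with compactness reduces any compact $K\subset G\setminus H$ to a finite subcover by null sets, giving $|\nu|(K)=0$ and hence $|\nu|(G\setminus H)=0$. Thus $\nu\in M(H)$, so $\mu-\lambda\delta_{0}$ is invertible in $M(H)$ and $\lambda\notin\sigma_{M(H)}(\mu)$, which completes the argument.
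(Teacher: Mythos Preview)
Your proof is correct, but it takes a genuinely different and heavier route than the paper. The paper argues purely on the measure side: given $\mu\ast\nu=\delta_{0}$ with $\mu\in M(H)$, it splits $\nu=\nu_{H}+\nu_{G\setminus H}$ and observes that, since $H$ is a subgroup, the support of $\mu\ast\nu_{G\setminus H}$ lies in $H+(G\setminus H)\subset G\setminus H$, while $\mu\ast\nu_{H}$ and $\delta_{0}$ are concentrated on $H$; hence $\mu\ast\nu_{G\setminus H}=0$ and $\nu_{H}$ is already the inverse. No Fourier analysis, no duality, no regularity argument is needed --- just the coset structure of $H$ in $G$.

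Your argument instead passes to the transform side, exploits the $H^{\perp}$-periodicity of $\widehat{\nu}=(\widehat{\mu}-\lambda)^{-1}$, pulls this back to $\chi\nu=\nu$ for all $\chi\in H^{\perp}$, and then invokes Pontryagin duality $(H^{\perp})^{\perp}=H$ together with inner regularity and compactness to conclude that $\nu$ is concentrated on $H$. Each step is sound, and the reasoning is a nice illustration of the correspondence between ``supported on $H$'' and ``$H^{\perp}$-invariant transform''. The price is that you import nontrivial duality and a covering/regularity argument to handle the possibly uncountable family $\{U_{\chi}\}$, whereas the paper's decomposition settles everything in two lines. Incidentally, you state the trivial inclusion with the correct orientation; the paper's sentence about which inclusion is ``automatic'' has the containment reversed, though its actual proof addresses the right (nontrivial) direction.
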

\begin{proof}
As the inclusion $\sigma_{M(H)}(\mu)\subset\sigma_{M(G)}(\mu)$ is satisfied automatically, it is enough to verify the following statement: if $\mu\in M(H)$ is invertible in $M(G)$ then $\mu^{-1}\in M(H)$. To do so, let us take $\mu\in M(H)$ and $\nu\in M(G)$ such that $\mu\ast\nu=\delta_{0}$. We decompose $\nu$ relative to $H$: $\nu=\nu_{H}+\nu_{G\setminus H}$\footnote{For a Borel set $S\subset G$ and $\nu\in M(G)$, the measure $\nu_{S}$ is defined for any Borel set $E\subset G$ by the formula: $\nu_{S}(E):=\nu(S\cap E)$}. Then
\begin{equation*}
\mu\ast\nu_{H}+\mu\ast\nu_{G\setminus H}=\delta_{0}.
\end{equation*}
But $\mu\ast\nu_{G\setminus H}$ is supported on a set contained in $H+G\setminus H\subset G\setminus H$ and of course $\mu\ast\nu_{H}$ is concentrated on $H$ which gives $\mu\ast\nu_{G\setminus H}=0$. Now, by the uniqueness of the inverse we obtain $\mu^{-1}=\nu_{H}\in M(H)$.
\end{proof}
It is well-known that the image of the Fourier-Stieltjes transform of a measure $\mu\in M(H)\subset M(G)$ is a fixed subset of the complex plane, no matter if we treat $\mu$ as an element of $M(H)$ or $M(G)$. Applying Hatori's theorem (Theorem \ref{hat}) we find $\mu\in\mathcal{N}(H)$ and $f\in L^{1}(H)$ such that $\mu+f\notin\mathcal{N}(H)$. This, together with Lemma \ref{odwn} contradicts the assumption $L^{1}(H)\subset\mathcal{S}(G)$.
\\
\underline{Case 2}: $H$ is compact.
\\
Let $H^{\bot}:=\{\gamma\in\widehat{G}:\gamma(x)=1\text{ for every }x\in H\}\subset\widehat{G}$ be the anihilator of $H$ (for basic information on this notion consult section 2.1 in \cite{r}). Then $H^{\bot}\simeq\widehat{\left(G/H\right)}$. Since $H$ is open, $G/ H$ is discrete and hence the dual group is compact implying the compactness of $H^{\bot}$. On the other hand, $\widehat{G}/H^{\bot}\simeq \widehat{H}$ and so $H^{\bot}$ is open in $\widehat{G}$. Using general theory (check Theorem 41.5 and 41.15 in \cite{hr}), we will find a Helson set\footnote{A compact subset $K$ of $\widehat{G}$ is called a Helson set if for every $f\in C(K)$ there exists $h\in L^{1}(G)$ such that $\widehat{h}|_{K}=f$} $C\subset H^{\bot}$ homeomorphic to the Cantor set. By Alexandroff-Hausdorff theorem there exists a continuous surjection $h:C\rightarrow\overline{\mathbb{D}}:=\{z\in\mathbb{C}:|z|\leq 1\}$. As $C$ is a Helson set in $H^{\bot}$ there exists $f\in L^{1}(G)$ such that $\widehat{f}|_{C}=h$.
\\
Let us take any measure $\mu\notin\mathcal{N}(G)$ with $\|\mu\|=1$. Then $\mu\ast(\delta_{0}-\chi_{H})\notin\mathcal{N}(G)$. Indeed, $\mu\ast \chi_{H}\in L^{1}(G)\subset\mathcal{N}(G)$, so if $\mu\ast(\delta_{0}-\chi_{H})$ had a natural spectrum we would obtain $\mu\in\mathcal{N}(G)$ by means of the following lemma (check Lemma 20 in \cite{ow}).
\begin{lem}\label{mnoz}
Let $\mu,\nu\in\mathcal{N}(G)$ satisfy $\mu\ast\nu=0$. Then $\mu+\nu\in\mathcal{N}(G)$.
\end{lem}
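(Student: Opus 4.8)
The plan is to prove the two inclusions defining naturality. The inclusion $\overline{(\widehat{\mu}+\widehat{\nu})(\widehat{G})}\subseteq\sigma(\mu+\nu)$ is automatic, since the range of the Fourier--Stieltjes transform always sits inside the (closed) spectrum. For the reverse inclusion I would start from the observation that $\mu\ast\nu=0$ forces $\widehat{\mu}(\varphi)\widehat{\nu}(\varphi)=0$ for every $\varphi\in\triangle(M(G))$, so at each $\varphi$ at least one of the two transforms vanishes and hence $(\widehat{\mu}+\widehat{\nu})(\varphi)$ equals either $\widehat{\mu}(\varphi)$ or $\widehat{\nu}(\varphi)$. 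Consequently every $\lambda\in\sigma(\mu+\nu)$ lies in $\sigma(\mu)\cup\sigma(\nu)$, which by the naturality of $\mu$ and $\nu$ equals $\overline{\widehat{\mu}(\widehat{G})}\cup\overline{\widehat{\nu}(\widehat{G})}$.

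Next I would upgrade this to membership in $\overline{(\widehat{\mu}+\widehat{\nu})(\widehat{G})}$ for every nonzero $\lambda$. If $\lambda\in\sigma(\mu+\nu)$ and $\lambda\neq 0$, then (say) $\lambda\in\overline{\widehat{\mu}(\widehat{G})}$, so there are characters $\gamma_{n}$ with $\widehat{\mu}(\gamma_{n})\to\lambda$; since $\lambda\neq 0$ we may assume $\widehat{\mu}(\gamma_{n})\neq 0$, whence $\widehat{\nu}(\gamma_{n})=0$ and $(\widehat{\mu}+\widehat{\nu})(\gamma_{n})=\widehat{\mu}(\gamma_{n})\to\lambda$. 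Thus $\sigma(\mu+\nu)\setminus\{0\}\subseteq\overline{(\widehat{\mu}+\widehat{\nu})(\widehat{G})}$, and the whole problem reduces to the single value $\lambda=0$: I must show that if $0\in\sigma(\mu+\nu)$ then $0\in\overline{(\widehat{\mu}+\widehat{\nu})(\widehat{G})}$. This is the crux of the argument.

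I would establish the contrapositive: assuming $\inf_{\gamma\in\widehat{G}}|(\widehat{\mu}+\widehat{\nu})(\gamma)|\geq c>0$, I will prove that $\mu+\nu$ is invertible. (One may assume neither $\mu$ nor $\nu$ is invertible, for otherwise $\mu\ast\nu=0$ makes one of them vanish and the statement is trivial.) Since $\widehat{\mu}\widehat{\nu}=0$ on $\widehat{G}$, the lower bound forces $\widehat{G}=U\sqcup V$ with $U=\{\widehat{\mu}\neq 0\}$ and $V=\{\widehat{\nu}\neq 0\}$, where $|\widehat{\mu}|\geq c$ on $U$ while $\widehat{\mu}=0$ on $V$. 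By the naturality of $\mu$ this gives $\sigma(\mu)=\overline{\widehat{\mu}(\widehat{G})}\subseteq\{0\}\cup\{|z|\geq c\}$, so $0$ is an isolated point of $\sigma(\mu)$. This is precisely the step where naturality is indispensable: without it the Wiener--Pitt phenomenon could place spectrum of $\mu$ arbitrarily close to $0$ and destroy the isolation. The isolation lets me form, through the holomorphic functional calculus, the Riesz idempotent $E=\frac{1}{2\pi i}\oint_{\Gamma}(z\delta_{0}-\mu)^{-1}\,dz$ around $0$, and computing its Gelfand transform gives $\widehat{E}(\varphi)=\mathbf{1}[\widehat{\mu}(\varphi)\ \text{lies inside}\ \Gamma]$, so that $\widehat{E}=\chi_{V}$ on $\widehat{G}$.

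Finally I would exploit the splitting $M(G)=E\ast M(G)\oplus E'\ast M(G)$ with $E'=\delta_{0}-E$. Since $\widehat{E\ast\mu}=\widehat{E}\,\widehat{\mu}=0$ on $\widehat{G}$, the uniqueness theorem yields $E\ast\mu=0$, so $\mu=E'\ast\mu$ lives in the summand $E'\ast M(G)$, where its spectrum is $\sigma(\mu)\setminus\{0\}\subseteq\{|z|\geq c\}$ and hence $\mu$ is invertible. By the symmetric argument the Riesz idempotent of $\nu$ at $0$ coincides (comparing Gelfand transforms on $\widehat{G}$) with $E'$, so $\nu$ lives in $E\ast M(G)$ and is invertible there. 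Adding the two partial inverses produces an inverse of $\mu+\nu$ in $M(G)$, the cross terms vanishing because $E\ast E'=0$. This contradicts $0\in\sigma(\mu+\nu)$ and settles the value $0$; combined with the earlier paragraphs it yields $\sigma(\mu+\nu)\subseteq\overline{(\widehat{\mu}+\widehat{\nu})(\widehat{G})}$ and therefore $\mu+\nu\in\mathcal{N}(G)$. I expect the main obstacle to be exactly the point $0$, and the key insight is that naturality turns the innocuous lower bound on $\widehat{\mu}+\widehat{\nu}$ into isolation of $0$ in $\sigma(\mu)$, making the Riesz idempotent available.
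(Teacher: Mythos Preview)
Your argument is correct. Note that the paper does not actually prove this lemma; it only cites Lemma~20 of \cite{ow}, so there is no in-text proof to compare against. Your handling of nonzero spectral values via $\widehat{\mu}(\varphi)\widehat{\nu}(\varphi)=0$ is the natural one, and your treatment of the remaining point $\lambda=0$ --- using naturality to isolate $0$ in $\sigma(\mu)$ and $\sigma(\nu)$, forming the Riesz idempotent $E$, invoking the uniqueness theorem to obtain $E\ast\mu=0$ and to identify the Riesz idempotent of $\nu$ with $E'$, and then assembling an inverse of $\mu+\nu$ from the partial inverses in $E'\ast M(G)$ and $E\ast M(G)$ --- is sound and complete.
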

Consider a measure $\nu\in M(G)$ defined as follows:
\begin{equation*}
\nu:=\mu\ast(\delta_{0}-\chi_{H})+f\ast\chi_{H}.
\end{equation*}
We will check that $\nu$ has a natural spectrum. To do so, let us take $\varphi\in\triangle(M(G))\setminus\widehat{G}$. Then $\varphi(\nu)=\varphi(\mu)\in\overline{\mathbb{D}}$ as $\|\mu\|=1$. But $\overline{\mathbb{D}}=h(C)=\widehat{f}|_{H^{\bot}}(C)=\widehat{\nu}(H^{\bot})$ which finishes the argument.
\\
Let us take a constant $c>0$ and analyze closely the spectrum of $\nu+c\mathbf{1}_{H}$. As $\mu\ast(\delta_{0}-\chi_{H})\notin\mathcal{N}(G)$ there exists $0\neq\lambda\in\sigma(\mu\ast(\delta_{0}-\chi_{H}))\setminus \overline{(\mu\ast(\delta_{0}-\chi_{H}))\widehat{\phantom{x}}(\widehat{G})}$ and $\varphi\in\triangle(M(G))\setminus\widehat{G}$ such that $\varphi(\mu\ast(\delta_{0}-\chi_{H}))=\varphi(\mu)=\lambda$. Recalling that $L^{1}(G)\subset M_{00}(G)$ we have $\varphi(\nu+c\chi_{H})=\varphi(\mu\ast(\delta_{0}-\chi_{H}))=\lambda$. Now, for $\gamma\in\widehat{G}\setminus H^{\bot}$ we obtain $(\nu+c\chi_{H})\widehat{\phantom{x}}(\gamma)=\widehat{\mu}(\gamma)$ and for $\gamma\in H^{\bot}$ we get $(\nu+c\chi_{H})\widehat{\phantom{x}}(\gamma)=\widehat{f}(\gamma)+c$. The definition of $\lambda$ gives $\lambda\notin\overline{(\nu+c\chi_{H})\widehat{\phantom{x}}(\widehat{G}\setminus H^{\bot})}$ and as $|\lambda|\leq \|\mu\|=1$ we can easily choose the constant $c$ big enough to obtain $\lambda\notin \overline{(\nu+c\chi_{H})\widehat{\phantom{x}}(H^{\bot})}$ finishing the proof of the following theorem.
\begin{thm}\label{abso}
Let $G$ be a non-compact and non-discrete locally compact Abelian group. Then $\mathcal{S}(G)\cap L^{1}(G)=\{0\}$.
\end{thm}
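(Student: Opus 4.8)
The plan is to exploit the algebraic structure of $\mathcal{S}(G)\cap L^{1}(G)$ together with the known classification of $L$-subspaces of $L^{1}(G)$. First I would observe that, by Proposition \ref{lpod}, $\mathcal{S}(G)$ is invariant under $\mu\mapsto\gamma\mu$ for every $\gamma\in\widehat{G}$; since the same holds trivially for $L^{1}(G)$, the intersection $\mathcal{S}(G)\cap L^{1}(G)$ is an $L$-subspace of $L^{1}(G)$ by Fact \ref{ly}. Invoking the structure theorem for $L$-subspaces of $L^{1}(G)$ from \cite{s}, it must take the form $\chi_{H}L^{1}(G)$ for some measurable $H\subset G$, determined up to a locally null set, so the goal becomes to show that $H$ is locally null. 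Argue by contradiction and suppose $H$ has positive Haar measure. The Hermitian symmetry $f\in\mathcal{S}(G)\cap L^{1}(G)\Leftrightarrow\widetilde{f}\in\mathcal{S}(G)\cap L^{1}(G)$ yields $H=-H$, and because $\mathcal{S}(G)\cap L^{1}(G)$ is an algebra, the characterisation of vanishing subalgebras of $L^{1}(G)$ (\cite{l}, \cite{sh}, \cite{gh}) forces $H+H\subset H$. Steinhaus' theorem (\cite{st}) then promotes $H$ to an open subgroup of $G$, and from here I would split into two cases according to whether $H$ is compact.

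In the non-compact case the argument is short. Since $H$ is itself a non-compact locally compact Abelian group, Hatori's theorem (Theorem \ref{hat}) supplies $\mu\in\mathcal{N}(H)$ and $g\in L^{1}(H)$ with $\mu+g\notin\mathcal{N}(H)$. The spectral-invariance Lemma \ref{odwn} ensures that naturality is unaffected when passing between $M(H)$ and $M(G)$, because the spectrum is preserved and the image of the Fourier--Stieltjes transform is the same set in either algebra. Hence $\mu+g\notin\mathcal{N}(G)$ while $g\in L^{1}(H)\subset\mathcal{S}(G)$, contradicting the defining property of $\mathcal{S}(G)$ applied to the natural measure $\mu$.

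The hard part is the compact case, and this is where I expect the real work to lie. Here $H$ is a compact open subgroup, so its annihilator $H^{\bot}\cong\widehat{G/H}$ is compact (as $G/H$ is discrete) and open in $\widehat{G}$. I would then use general theory (Theorems 41.5 and 41.15 in \cite{hr}) to locate a Helson set $C\subset H^{\bot}$ homeomorphic to the Cantor set, and the Alexandroff--Hausdorff theorem to obtain a continuous surjection $h:C\to\overline{\mathbb{D}}$; the Helson property then yields $f\in L^{1}(G)$ with $\widehat{f}|_{C}=h$. Starting from any $\mu\notin\mathcal{N}(G)$ with $\|\mu\|=1$, the measure $\mu\ast(\delta_{0}-\chi_{H})$ still fails naturality (this uses $\mu\ast\chi_{H}\in L^{1}(G)\subset\mathcal{N}(G)$ and Lemma \ref{mnoz}), and I would assemble $\nu:=\mu\ast(\delta_{0}-\chi_{H})+f\ast\chi_{H}$, whose non-character functionals all land in $\overline{\mathbb{D}}=h(C)=\widehat{\nu}(H^{\bot})$, so that $\nu\in\mathcal{N}(G)$.

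The crux is then to check that adding $c\chi_{H}$ for a sufficiently large constant $c>0$ produces $\nu+c\chi_{H}\notin\mathcal{N}(G)$. The idea is that a carefully chosen $\lambda\in\sigma(\mu\ast(\delta_{0}-\chi_{H}))$ witnessing non-naturality is still attained by some $\varphi\in\triangle(M(G))\setminus\widehat{G}$ on $\nu+c\chi_{H}$, while on characters the transform splits as $\widehat{\mu}$ on $\widehat{G}\setminus H^{\bot}$ and $\widehat{f}+c$ on $H^{\bot}$; choosing $c$ large pushes $\lambda$ simultaneously out of the closures $\overline{(\nu+c\chi_{H})\widehat{\phantom{x}}(\widehat{G}\setminus H^{\bot})}$ and $\overline{(\nu+c\chi_{H})\widehat{\phantom{x}}(H^{\bot})}$. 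Since $\chi_{H}\in\mathcal{S}(G)\cap L^{1}(G)$ by hypothesis but $\nu\in\mathcal{N}(G)$, the resulting $\nu+c\chi_{H}\notin\mathcal{N}(G)$ is the final contradiction. The delicate bookkeeping that controls where $\lambda$ sits relative to the two pieces of the transform's range, together with the verification that $\lambda$ survives in the spectrum after the perturbation, is the step I would expect to demand the most care.
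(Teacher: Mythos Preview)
Your proposal is correct and follows the paper's proof essentially step for step: the $L$-subspace reduction to $\chi_{H}L^{1}(G)$, the Steinhaus argument promoting $H$ to an open subgroup, the compact/non-compact dichotomy, the use of Lemma \ref{odwn} with Hatori's theorem in the non-compact case, and the Helson-set/Alexandroff--Hausdorff construction of $\nu$ together with the $c\chi_{H}$ perturbation in the compact case are all exactly as in the paper. The only cosmetic difference is that the paper records $H\neq G$ (via Hatori) before the case split, but your non-compact case already subsumes this.
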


We are ready now to prove the main result of this section.
\begin{thm}
Let $G$ be a non-compact and non-discrete locally compact Abelian group. Then $\mathcal{S}(G)\cap M_{0}(G)=\{0\}$.
\end{thm}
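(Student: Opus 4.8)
The plan is to show that $0$ is the only spectrally reasonable measure in $M_0(G)$ by exploiting the band ($L$-subspace) structure to split an arbitrary element into its Lebesgue components and then disposing of each one separately. First I would record that $X:=\mathcal{S}(G)\cap M_0(G)$ is a closed $L$-subalgebra of $M(G)$ (indeed a closed $L$-ideal of $\mathcal{S}(G)$): the set $M_0(G)$ is a closed ideal and, by Fact \ref{ly}, an $L$-subspace, since $\widehat{\gamma\mu}(\chi)=\widehat{\mu}(\chi-\gamma)$ still vanishes at infinity for every $\gamma\in\widehat{G}$; intersecting with the $L$-subalgebra $\mathcal{S}(G)$ from Proposition \ref{lpod} gives the claim.

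Next, fix $\mu\in X$ and decompose it with respect to Haar measure as $\mu=\mu_a+\mu_d+\mu_c$ into its absolutely continuous, discrete and continuous singular parts. Each summand is absolutely continuous with respect to $|\mu|$, hence with respect to $\mu$, so the $L$-subspace property forces $\mu_a,\mu_d,\mu_c\in X$. Now $\mu_a\in\mathcal{S}(G)\cap L^{1}(G)=\{0\}$ by Theorem \ref{abso}, while $\mu_d\in M_{d}(G)\cap M_{0}(G)=\{0\}$, because the Fourier--Stieltjes transform of a nonzero discrete measure is a nonzero Bohr almost periodic function, which cannot vanish at infinity on the non-compact group $\widehat{G}$. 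Thus the problem collapses to showing that a continuous, Haar-singular measure lying in $X$ must vanish. Since $\mu$ and $|\mu|$ are mutually absolutely continuous, the $L$-subspace property also yields $|\mu|\in X$, so I may assume in addition that $\mu=\sigma\ge 0$ is a positive, continuous, Haar-singular measure in $\mathcal{S}(G)\cap M_{0}(G)$.

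The heart of the matter, and the step I expect to be the main obstacle, is to prove that such a nonzero $\sigma$ cannot be spectrally reasonable. The purely structural data obtained so far do not suffice: Riesz-product constructions produce positive, continuous, Haar-singular measures in $M_{0}(G)$ whose convolution powers all remain singular, so one genuinely has to invoke spectral reasonableness rather than mere membership in $\mathcal{N}(G)$ (those Wiener--Pitt examples fall outside $\mathcal{N}(G)$, but naturality alone would not rule $\sigma$ out). The strategy is therefore to manufacture a perturbation $\nu\in\mathcal{N}(G)$ with $\sigma+\nu\notin\mathcal{N}(G)$, contradicting $\sigma\in\mathcal{S}(G)$. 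Concretely, I would exploit the positivity and continuity of $\sigma$ together with a construction in the spirit of the Case~2 argument above (a Helson set $K$ on which a prescribed symbol is realised as $\widehat{f}|_{K}$) and of Parreau's measures, choosing $\nu$ so that $\sigma+\nu$ attains at some $\varphi\in\triangle(M(G))\setminus\widehat{G}$ a value $\lambda$ that the transform $(\sigma+\nu)\widehat{\phantom{x}}$ misses on $\widehat{G}$, while keeping the various natural pieces on disjoint supports so that Lemma \ref{mnoz} recombines them without spoiling naturality.

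Once $\sigma=0$ is established, unwinding the reductions gives $\mu_c=0$ and hence $\mu=0$, so $\mathcal{S}(G)\cap M_{0}(G)=\{0\}$. The delicate point throughout is precisely the construction in the previous paragraph: controlling the Gelfand transform of the perturbed measure off $\widehat{G}$ while keeping $\nu$ natural is exactly where the Wiener--Pitt/Parreau machinery, the inverse-closedness of $\mathcal{S}(G)$, and the symmetric Banach$^{\ast}$-algebra structure (Lemma \ref{rozl}, Theorem \ref{pod}) must be brought to bear.
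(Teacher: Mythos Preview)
Your reductions are correct but your proof stops precisely where the content is. After arriving at a positive, continuous, Haar-singular $\sigma\in\mathcal{S}(G)\cap M_{0}(G)$, you only \emph{announce} that you will build some $\nu\in\mathcal{N}(G)$ with $\sigma+\nu\notin\mathcal{N}(G)$, invoking Helson sets and Parreau-type constructions ``in the spirit of Case~2''. None of this is carried out, and in fact neither Helson sets nor Parreau measures are needed; the machinery you propose is considerably heavier than what the problem requires, and it is not clear how you would make it work here.

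The paper's argument is short and direct, and it does not pass through your Lebesgue decomposition. One takes any nonzero $\mu\in\mathcal{S}(G)\cap M_{0}(G)$, normalises to a probability measure, and observes (via the $L$-subalgebra property and Theorem~\ref{abso}) that $\mu^{\ast n}\perp L^{1}(G)$ for every $n$. Since $\mu\in M_{0}(G)$, choose a compact $K\subset\widehat{G}$ with $|\widehat{\mu}|<\varepsilon<\tfrac13$ off $K$, and pick $k\in L^{1}(G)$ with $\widehat{k}=1$ on $K$ and $\|k\|\le 2$. Set $\nu:=\mu-\mu\ast k=\mu\ast(\delta_{0}-k)$. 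Then $\nu^{\ast n}=\mu^{\ast n}+h_{n}$ with $h_{n}\in L^{1}(G)$, so $\|\nu^{\ast n}\|\ge\|\mu^{\ast n}\|=1$ by singularity, giving $r(\nu)\ge 1$; on the other hand $\widehat{\nu}$ vanishes on $K$ and satisfies $|\widehat{\nu}(\gamma)|\le\varepsilon\,\|\delta_{0}-k\|<1$ off $K$. Hence $\nu\notin\mathcal{N}(G)$. But $-\mu\ast k\in L^{1}(G)\subset\mathcal{N}(G)$ and $\nu=\mu+(-\mu\ast k)$, contradicting $\mu\in\mathcal{S}(G)$. This is exactly the ``manufactured perturbation'' you were looking for, and it is an $L^{1}$ cutoff, not a Helson-set or Parreau construction. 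Note also that what is really used is $\mu^{\ast n}\perp L^{1}(G)$ for \emph{all} $n$ (to get $r(\nu)\ge 1$); your reduction to a singular $\sigma$ only gives singularity of $\sigma$ itself, so you would still need to re-derive the singularity of all powers before any such argument could start.
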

\begin{proof}
Assume, towards contradiction, that there exists $0\neq \mu\in\mathcal{S}(G)\cap M_{0}(G)$. As $\mathcal{S}(G)\cap M_{0}(G)$ is clearly an $L$-subalgebra of $M(G)$ we are allowed to restrict the discussion to the case of a probability measure. Moreover, by Theorem \ref{abso} we have $\mu^{\ast n}\bot L^{1}(G)$ for every $n\in\mathbb{N}$. Since $\mu\in M_{0}(G)$ there exists a compact set $K\subset \widehat{G}$ such that $|\widehat{\mu}(\gamma)|<\varepsilon<\frac{1}{3}$. By Theorem 2.6.8 from \cite{r} we can find $k\in L^{1}(G)$ satisfying $\widehat{k}=1$ na $K$ and $\|k\|\leq 2$. Let us consider a measure $\nu$ defined below
\begin{equation*}
\nu:=\mu-\mu\ast k=\mu\ast(\delta_{0}-k).
\end{equation*}
Basing on the spectral radius formula (note that $\nu^{\ast n}=\mu^{\ast n}+h_{n}$ for some $h_{n}\in L^{1}(G)$) we get $r(\nu)\geq 1$. On the other hand, for $\gamma\in K$ we get $\widehat{\nu}(\gamma)=0$ and for $\gamma\notin K$ we obtain
\begin{equation*}
|\widehat{\nu}(\gamma)|=|\widehat{\mu}(\gamma)|\cdot |1-\widehat{k}(\gamma)|\leq \varepsilon\cdot \|\delta_{0}+k\|< 3\varepsilon<1.
\end{equation*}
Thus $\nu$ does not have a natural spectrum contradicting the assumption $\mu\in\mathcal{S}(G)$.
\end{proof}
Using once again the $L$-subspace property of $\mathcal{S}(G)\cap M_{0}(G)$ we obtain the following corollary.
\begin{cor}
Let $\mu\in\mathcal{S}(G)$ ($G$ is non-compact and non-discrete). Then $\mu^{\ast n}\bot M_{0}(G)$ for every $n\in\mathbb{N}$.
\end{cor}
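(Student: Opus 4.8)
The plan is to exploit that $\mathcal{S}(G)$ is a subalgebra of $M(G)$ (Theorem \ref{pod}), so that $\mu^{\ast n}\in\mathcal{S}(G)$ for every $n\in\mathbb{N}$, and then to strip off the $M_{0}(G)$-part of $\mu^{\ast n}$ using the $L$-subspace structure together with the theorem just proved, namely $\mathcal{S}(G)\cap M_{0}(G)=\{0\}$. The whole argument should reduce the corollary to a single application of a band (Lebesgue-type) decomposition relative to $M_{0}(G)$.

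First I would record that $M_{0}(G)$ is itself an $L$-subspace of $M(G)$. By Fact \ref{ly} it suffices to check that $\gamma\mu\in M_{0}(G)$ whenever $\mu\in M_{0}(G)$ and $\gamma\in\widehat{G}$; but $\widehat{\gamma\mu}(\chi)=\widehat{\mu}(\chi-\gamma)$ is merely a translate of $\widehat{\mu}$, and vanishing at infinity on $\widehat{G}$ is translation invariant, so the claim is immediate. Consequently $M_{0}(G)$ is a band in the Banach lattice $M(G)$, and since $M(G)$ is Dedekind complete this band is a projection band.

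Next, fixing $n$ and setting $\lambda:=\mu^{\ast n}\in\mathcal{S}(G)$, I would write $P$ for the band projection onto $M_{0}(G)$ and decompose $\lambda=P\lambda+(\lambda-P\lambda)$ with $P\lambda\in M_{0}(G)$ and $\lambda-P\lambda\perp M_{0}(G)$. The crucial point is that a band projection satisfies $|P\lambda|\le|\lambda|$, whence $P\lambda\ll\lambda$. Because $\mathcal{S}(G)$ is an $L$-subalgebra (Proposition \ref{lpod}), absolute continuity forces $P\lambda\in\mathcal{S}(G)$; as also $P\lambda\in M_{0}(G)$, the theorem just established gives $P\lambda\in\mathcal{S}(G)\cap M_{0}(G)=\{0\}$. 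Hence $P\lambda=0$ and $\mu^{\ast n}=\lambda=\lambda-P\lambda\perp M_{0}(G)$, which is the assertion.

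The only delicate point, and the step I expect to be the main obstacle, is the clean extraction of the $M_{0}(G)$-component of $\mu^{\ast n}$ as a measure absolutely continuous with respect to $\mu^{\ast n}$. This is exactly the content of the band-projection formalism: the closed $L$-subspaces of $M(G)$ are precisely its bands, and in the Dedekind-complete lattice $M(G)$ every band is complemented by its orthogonal band via a lattice projection bounded between $0$ and the identity. Once this is in hand the rest is a formal chain of inclusions. If one prefers to avoid lattice language, the same decomposition can be obtained by taking the Lebesgue decomposition of $\lambda$ against $M_{0}(G)$ directly, but the absolute-continuity bound $P\lambda\ll\lambda$ must be verified either way.
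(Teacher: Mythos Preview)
Your proof is correct and is precisely the argument the paper has in mind: the paper's ``proof'' is the single remark that the corollary follows from the $L$-subspace property, and you have simply unpacked this into the band decomposition of $\mu^{\ast n}$ relative to the $L$-ideal $M_{0}(G)$, then applied Proposition~\ref{lpod} and the theorem $\mathcal{S}(G)\cap M_{0}(G)=\{0\}$ to kill the $M_{0}$-part. Nothing is missing and nothing is different.
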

\section{Measures with finite spectrum}
In this part we will prove that no measure with finite spectrum is reasonable unless it is a trigonometric polynomial (finite linear combination of characters) or '$\delta_{0} - $' is a trigonometric polynomial. We start with idempotent measures.
\begin{thm}\label{idem}
Let $\mu\in M(G)$ be an idempotent measure different from $0$ and $\delta_{0}$. If $\mu\in\mathcal{S}(G)$ then $\mu$ is a trigonometric polynomial or $\delta_{0}-\mu$ is a trigonometric polynomial. In both cases the group $G$ is compact.
\end{thm}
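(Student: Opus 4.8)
The plan is to begin from Cohen's idempotent theorem, which writes $\widehat{\mu}=\mathbf 1_{S}$ for a set $S$ in the coset ring of $\widehat{G}$; since $\mathcal S(G)$ is a unital algebra (Theorem \ref{pod}), the complementary idempotent $\delta_{0}-\mu$, with transform $\mathbf 1_{S^{c}}$, also lies in $\mathcal S(G)$, and the two are orthogonal, $\mu\ast(\delta_{0}-\mu)=0$. I would then reduce the entire statement to the single assertion that $S$ or $S^{c}$ is finite. Indeed, a finite nonempty $S$ forces $\widehat{G}$ to be discrete (a continuous idempotent transform must be clopen, and a finite clopen set in a $T_{1}$ group produces an isolated point, hence discreteness), so $G$ is compact and $\mu=\sum_{\gamma\in S}\gamma$ is a trigonometric polynomial; the case of finite $S^{c}$ is symmetric through $\delta_{0}-\mu$. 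Thus ``finite or cofinite'' already delivers both the trigonometric-polynomial conclusion and the compactness of $G$.

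The argument then splits according to the compactness of $S$ and $S^{c}$. If $S$ is compact then $\widehat{\mu}=\mathbf 1_{S}$ vanishes at infinity, so $\mu\in M_{0}(G)$: for non-compact $G$ the Section~2 result $\mathcal S(G)\cap M_{0}(G)=\{0\}$ forces $\mu=0$, a contradiction, while for compact $G$ compactness of $S$ in the discrete group $\widehat{G}$ simply means $S$ is finite, and we conclude via Theorem \ref{nato}. The case of compact $S^{c}$ is treated identically with $\delta_{0}-\mu$ in place of $\mu$. This disposes of every idempotent except those with $S$ and $S^{c}$ both non-compact.

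The heart of the proof is this remaining case. Here I would construct a measure $\nu\in\mathcal N(G)$ with $\mu\ast\nu\notin\mathcal N(G)$, contradicting Lemma \ref{rozl}(1). Write $\nu=\nu_{S}+\nu_{S^{c}}$ with $\widehat{\nu_{S}}$ supported in $S$ and $\widehat{\nu_{S^{c}}}$ in $S^{c}$, so that $\mu\ast\nu=\nu_{S}$ and $\nu_{S}\ast\nu_{S^{c}}=0$; the orthogonality splits the Gelfand space along $\mu$ and yields $\sigma(\nu)=\sigma(\nu_{S})\cup\sigma(\nu_{S^{c}})$. I would take $\nu_{S}$ to be a Wiener--Pitt measure (non-natural spectrum) transplanted so that $\widehat{\nu_{S}}$ lives on a coset of a non-compact closed subgroup $\Lambda\subset S$: the dual $\widehat{\Lambda}$ is then non-discrete and carries such a measure, and the transplantation preserves the spectrum in the spirit of Lemma \ref{odwn}. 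Let $B=\sigma(\nu_{S})\setminus\overline{\widehat{\nu_{S}}(\widehat{G})}$ be its bounded non-natural part. Finally I would choose $\nu_{S^{c}}$ with a natural spectrum, supported in transform on $S^{c}$, whose range $\overline{\widehat{\nu_{S^{c}}}(S^{c})}$ covers a disc containing $B$; such a measure is obtained by transplanting, from an infinite quotient attached to $S^{c}$, a discrete (hence natural) measure whose transform fills a disc. Then $\sigma(\nu)=\overline{\widehat{\nu_{S}}(\widehat{G})}\cup B\cup\overline{\widehat{\nu_{S^{c}}}(\widehat{G})}=\overline{\widehat{\nu}(\widehat{G})}$, so $\nu\in\mathcal N(G)$, whereas $\mu\ast\nu=\nu_{S}\notin\mathcal N(G)$, the desired contradiction.

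I expect the genuine obstacle to be the structural and transplantation steps of the last paragraph. First, one needs a coset-ring lemma ensuring that a non-compact set in the coset ring of $\widehat{G}$ contains a coset of a non-compact closed subgroup, so that both $S$ and $S^{c}$ produce non-discrete quotients supporting the required measures. Second, one must verify that lifting a measure from such a quotient preserves both its spectrum and the naturality (or non-naturality) of that spectrum — this is where Lemma \ref{odwn} and the orthogonality mechanism of Lemma \ref{mnoz} must be adapted from the subgroup to the quotient setting. By contrast, covering the bounded set $B$ by the transform range of a natural measure on $S^{c}$ should be routine once an infinite quotient is available, since discrete measures already realise full discs as transform ranges.
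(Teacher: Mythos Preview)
Your approach is architecturally the same as the paper's: build $\nu\in\mathcal N(G)$ as a sum of two orthogonal pieces, one in $\mu\ast M(G)$ and one in $(\delta_0-\mu)\ast M(G)$, where one piece fills a disc via a discrete measure attached to an open coset inside the support, and the other carries the Wiener--Pitt pathology; then use $\mu\in\mathcal S(G)$ together with Lemma~\ref{rozl}(1) to reach a contradiction. The structural input you anticipate is exactly what the paper invokes (Lemma~6.9 of \cite{owa}, giving an infinite \emph{open} coset inside $S$, not merely a closed one --- this matters, since a measure whose transform vanishes off a closed non-open subgroup $\Lambda$ is necessarily zero), and the disc-filling is realised just as you suggest, via a Helson Cantor set in the Bohr compactification.

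Two differences are worth recording. First, your preliminary split on the compactness of $S$ via $\mathcal S(G)\cap M_0(G)=\{0\}$ is a detour; the paper simply assumes $S$ and $S^c$ are both infinite and proceeds uniformly. Second, and more interestingly, the paper sidesteps your transplantation worry altogether. Instead of lifting a Wiener--Pitt measure into $\mu\ast M(G)$ (which, as you note, requires a quotient analogue of Lemma~\ref{odwn}), it takes an \emph{arbitrary} $\omega\notin\mathcal N(G)$ with $\|\omega\|\le 1$, sets $\nu=\mu\ast\rho+(\delta_0-\mu)\ast\omega$ with $\rho$ the disc-filler, and deduces $(\delta_0-\mu)\ast\omega\in\mathcal N(G)$. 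Running the identical argument with $\mu$ and $\delta_0-\mu$ interchanged gives $\mu\ast\omega\in\mathcal N(G)$, and then Lemma~\ref{mnoz} forces $\omega\in\mathcal N(G)$. Thus the paper trades your quotient-spectrum lemma for one extra appeal to Lemma~\ref{mnoz}; your route is more direct at the endgame, the paper's is lighter on the setup.
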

\begin{proof}
Let us consider $\mathrm{supp}\widehat{\mu}:=\{\gamma\in\widehat{G}:\widehat{\mu}(\gamma)=1\}$. Clearly, it is a closed-open subset of $\widehat{G}$. If $\mathrm{supp}\widehat{\mu}$ is finite then $\widehat{G}$ is discrete and $\mu$ is a trigonometric polynomial. If the complement of $\mathrm{supp}\widehat{\mu}$ is finite then $\delta_{0}-\mu$ is a trigonometric polynomial in the same way as before. Thus, we are allowed to assume that both $\mathrm{supp}\widehat{\mu}$ and its complement are infinite. By the famous Cohen's idempotent theorem $\mathrm{supp}\widehat{\mu}$ belongs to the open coset ring of $\widehat{G}$. By Lemma 6.9. from \cite{owa} $\mathrm{supp}\widehat{\mu}$ contains an open infinite coset possibly excluding finitely many elements. Hence $\mathrm{supp}\widehat{\mu}\supset \gamma_{0}L\setminus F$ for some open subgroup $L\subset\widehat{G}$, finite set $F\subset\widehat{G}$ and $\gamma_{0}\in\widehat{G}$. However, a brief look on the proof of Lemma 6.9. shows that the set $F$ may be non-empty iff $\widehat{G}$ is discrete. Then any finite set belongs to the coset ring and as trigonometric polynomials are spectrally reasonable (Theorem \ref{nato}) we can assume, without loss of generality, that $F=\emptyset$ (we consider a measure $\mu+f$ where $\widehat{f}=\chi_{F}$ which is in $\mathcal{S}(G)$ iff $\mu\in \mathcal{S}(G))$. As $\mathcal{S}(G)$ is an $L$-subalgebra we can further reduce the situation to $\mathrm{supp}\widehat{\mu}\supset L$. The general theory of anihilators and quotients gives $L=\widehat{\left(G/ L^{\bot}\right)}$. Let $bL$ be the Bohr compactification of $L$. Then $bL=\triangle(M_{d}(G/ L^{\bot}))$. As $L$ is infinite, $bL$ is an infinite compact group and so contains a Helson set $C$ homeomorphic to the Cantor set. By the same arguments as in the last section there exists a continuous surjection $c: C\rightarrow\overline{\mathbb{D}}$ and $\rho_{1}\in M_{d}(G/L^{\bot})$ such that $\widehat{\rho_{1}}|_{C}=c$. Using the standard surjection $F:M(G)\rightarrow M(G/H)$ defined via canonical epimorphism $\pi: G\rightarrow G/H$ (consult Theorem 2.7.2 in \cite{r}) we find $\rho\in M_{d}(G)$ satisfying $\widehat{\rho}|_{L}=\widehat{\rho_{1}}$.
\\
Let $\omega\in M(G)$ be any measure with non-natural spectrum and $\|\omega\|\leq 1$. Let us define a measure $\nu\in M(G)$ by the formula:
\begin{equation*}
\nu=\mu\ast\rho+(\delta_{0}-\mu)\ast\omega.
\end{equation*}
Recalling that $L\subset\mathrm{supp}\widehat{\mu}$ we get for $\gamma\in L$:
\begin{equation*}
\widehat{\nu}(\gamma)=\widehat{\rho}(\gamma)=\widehat{\rho_{1}}(\gamma)\text{ implying }\widehat{\rho_{1}}(L)\subset\widehat{\nu}(\widehat{G}).
\end{equation*}
But $L$ is dense in $bL$ which leads to
\begin{equation}\label{zawko}
\overline{\mathbb{D}}=c(C)=\widehat{\rho_{1}}(K)\subset\overline{\widehat{\rho_{1}}(L)}\subset\overline{\widehat{\nu}(\widehat{G})}.
\end{equation}
Let us take $\varphi\in\triangle(M(G))$ and consider two cases:
\begin{enumerate}
  \item $\varphi(\mu)=0$. Then $\varphi(\nu)=\varphi(\omega)\in\overline{\mathbb{D}}\subset\overline{\widehat{\nu}(\widehat{G})}$ by (\ref{zawko}).
  \item $\varphi(\mu)=1$. Then $\varphi(\nu)=\varphi(\mu\ast\rho)$. By our assumption $\mu\in\mathcal{S}(G)$ and as $\rho\in M_{d}(G)\subset\mathcal{N}(G)$ we get $\mu\ast\rho\in\mathcal{N}(G)$ by Lemma \ref{rozl}. Therefore $\varphi(\mu\ast\rho)\in\overline{\widehat{\mu\ast\rho}(\widehat{G})}\subset\overline{\widehat{\nu}(\widehat{G})}$ as if $\widehat{\mu}(\gamma_{0})=1$ for some $\gamma_{0}\in\widehat{G}$ then we have $\widehat{\nu}(\gamma_{0})=\widehat{\mu\ast\rho}(\gamma_{0})$ by the definition of $\nu$ (if $\widehat{\mu}(\gamma_{0})=0$ then the assertion is clear).
\end{enumerate}
Finally, $\nu\in\mathcal{N}(G)$. Since we have assumed that $\mu\in\mathcal{S}(G)$ we get $\nu\ast (\delta_{0}-\mu)=(\delta_{0}-\mu)\ast\omega\in\mathcal{N}(G)$. The whole argument can be repeated replacing $\mu$ with $\delta_{0}-\mu$ implying $\mu\ast\omega\in\mathcal{N}(G)$. However, by Lemma \ref{mnoz}, this results in $\omega\in\mathcal{N}(G)$ which is a desired contradiction.
\end{proof}
We will now draw several useful corollaries.
\begin{cor}\label{skon}
Let $\mu\in \mathcal{S}(G)\setminus\mathrm{lin}(\delta_{0})$ have finite spectrum. Then $\mu$ is a trigonometric polynomial or $\delta_{0}-\mu$ is a trigonometric polynomial. In both cases the group $G$ is compact.
\end{cor}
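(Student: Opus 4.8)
The plan is to reduce the finite-spectrum case to the idempotent case of Theorem~\ref{idem} by decomposing $\mu$ into its spectral (Riesz) idempotents inside $\mathcal{S}(G)$. First I would verify that the spectrum does not change when passing to the subalgebra: since $\mathcal{S}(G)$ is a closed unital subalgebra of $M(G)$ (Theorem~\ref{pod}) the inclusion $\sigma(\mu)\subseteq\sigma_{\mathcal{S}(G)}(\mu)$ is automatic, while part~(2) of Lemma~\ref{rozl} shows that any $\mu-\lambda\delta_{0}$ that is invertible in $M(G)$ has its inverse in $\mathcal{S}(G)$, giving the reverse inclusion. Hence $\sigma_{\mathcal{S}(G)}(\mu)=\sigma(\mu)=\{\lambda_{1},\dots,\lambda_{n}\}$ is finite and the holomorphic functional calculus is available in $\mathcal{S}(G)$.

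As the points $\lambda_{i}$ are isolated, I would choose functions $f_{i}$ holomorphic near $\sigma(\mu)$ with $f_{i}\equiv 1$ around $\lambda_{i}$ and $f_{i}\equiv 0$ around the other eigenvalues, and put $e_{i}:=f_{i}(\mu)\in\mathcal{S}(G)$. These are pairwise orthogonal idempotent measures with $\sum_{i}e_{i}=\delta_{0}$, and on $\triangle(\mathcal{S}(G))$ their Gelfand transforms are the indicators $\widehat{e_{i}}=\chi_{\{\widehat{\mu}=\lambda_{i}\}}$, so that $\widehat{\mu}=\sum_{i}\lambda_{i}\widehat{e_{i}}$ holds pointwise on $\triangle(\mathcal{S}(G))$. \textbf{The step I expect to require the most care} is upgrading this identity of Gelfand transforms---which a priori only controls $\mu$ modulo the radical of $\mathcal{S}(G)$---to the genuine equality of measures $\mu=\sum_{i}\lambda_{i}e_{i}$. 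Here I would use that $\widehat{G}\subseteq\triangle(\mathcal{S}(G))$, so that $\bigl(\mu-\sum_{i}\lambda_{i}e_{i}\bigr)\widehat{\phantom{x}}$ vanishes on every character $\gamma\in\widehat{G}$; since the Fourier--Stieltjes transform is injective on $M(G)$, the difference is the zero measure. This identifies $\mu$ with a finite orthogonal combination of idempotent measures lying in $\mathcal{S}(G)$.

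With $\mu=\sum_{i}\lambda_{i}e_{i}$ established I would apply Theorem~\ref{idem}. Because $\mu\notin\mathrm{lin}(\delta_{0})$ there are at least two distinct eigenvalues, and then orthogonality together with $\sum_{i}e_{i}=\delta_{0}$ forces every $e_{i}$ to be a nonzero idempotent different from $\delta_{0}$. Theorem~\ref{idem} therefore applies to each $e_{i}$, immediately yielding that $G$ is compact and that each $e_{i}$ is a trigonometric polynomial or $\delta_{0}-e_{i}$ is one; equivalently, each clopen set $S_{i}:=\{\widehat{e_{i}}=1\}\subseteq\widehat{G}$ is finite or cofinite. The final point is the elementary observation that the sets $S_{i}$ partition the infinite discrete group $\widehat{G}$ (as $\sum_{i}\widehat{e_{i}}=\widehat{\delta_{0}}\equiv 1$), so exactly one of them, say $S_{1}$, is cofinite and the remaining ones are finite. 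Consequently $\widehat{\mu}$ equals the constant $\lambda_{1}$ off a finite subset of $\widehat{G}$, i.e.\ $\mu-\lambda_{1}\delta_{0}$ has finitely supported transform and is a trigonometric polynomial; the two alternatives in the statement are exactly the cases where this cofinite value $\lambda_{1}$ equals $0$ (so that $\mu$ itself is a trigonometric polynomial) or $1$ (so that $\delta_{0}-\mu$ is).
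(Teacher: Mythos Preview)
Your approach is exactly the paper's: decompose $\mu$ via the Riesz/spectral idempotents and apply Theorem~\ref{idem} to each of them. You are simply more explicit than the paper about why the functional calculus may be carried out inside $\mathcal{S}(G)$ (the spectral permanence via Lemma~\ref{rozl}(2)) and why the Gelfand identity upgrades to an equality of measures (injectivity of the Fourier--Stieltjes transform). The partition argument showing that exactly one of the $S_{i}$ is cofinite is also correct and is implicit in the paper's one-line ``the assertion follows from Theorem~\ref{idem}''.

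There is, however, a genuine slip in your final sentence. From your argument you obtain that $\mu-\lambda_{1}\delta_{0}$ is a trigonometric polynomial for the unique eigenvalue $\lambda_{1}$ whose spectral set is cofinite, but nothing you have proved forces $\lambda_{1}\in\{0,1\}$; indeed $\mu=2\delta_{0}+p$ for any nonzero trigonometric polynomial $p$ lies in $\mathcal{S}(G)\setminus\mathrm{lin}(\delta_{0})$, has finite spectrum, and satisfies neither alternative literally. What your argument (and the paper's) actually establishes is the correct statement that $\mu-c\delta_{0}$ is a trigonometric polynomial for some scalar $c$, and it is this version that is used later (e.g.\ in the proof of Theorem~\ref{okz}). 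So the gap is not in your method but in trying to match the dichotomy exactly as phrased; you should stop at ``$\mu-\lambda_{1}\delta_{0}$ is a trigonometric polynomial'' and note that this is the form in which the corollary is applied.
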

\begin{proof}
As $\sigma(\mu)$ is finite we use the functional calculus to obtain the decomposition $\mu=\lambda_{1}\nu_{1}+\lambda_{2}\nu_{2}+\ldots+\lambda_{n}\nu_{n}$ for some $\lambda_{k}\in\mathbb{C}$, $k=1,\ldots,n$ where $n=\#\sigma(\mu)$ and $\nu_{k}$ for $k\in\{1,\ldots,n\}$ are idempotent measures satisfying $\nu_{l}\ast\nu_{j}=0$ for $l\neq j$. Clearly, all $\nu_{k}'s$ are again spectrally reasonable so the assertion follows from Theorem \ref{idem}.
\end{proof}
Theorem \ref{idem} implies that there are no non-trivial idempotents in $\mathcal{S}(G)$ unless $G$ is compact. This statement, together with the \v{S}hilov idempotent theorem gives the second corollary.
\begin{cor}
Let $G$ be a locally compact non-compact group. Then $\triangle(\mathcal{S}(G))$ is connected.
\end{cor}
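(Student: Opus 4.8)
The plan is to exploit the \v{S}ilov idempotent theorem, which sets up a correspondence between idempotents of a commutative unital Banach algebra and the clopen subsets of its Gelfand space, and to feed into it the structural information supplied by Theorem \ref{idem}. By Theorem \ref{pod}, $\mathcal{S}(G)$ is a commutative unital Banach algebra with unit $\delta_{0}$, so its Gelfand space $\triangle(\mathcal{S}(G))$ is compact and Hausdorff; recall that such a space is connected precisely when its only clopen subsets are $\emptyset$ and the whole space. Thus the whole corollary reduces to ruling out non-trivial clopen subsets.

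First I would invoke the \v{S}ilov idempotent theorem in the following form: for every clopen subset $U\subset\triangle(\mathcal{S}(G))$ there exists an idempotent $e\in\mathcal{S}(G)$ with $\widehat{e}=\chi_{U}$. Since $\mathcal{S}(G)$ is a \emph{closed subalgebra} of $M(G)$ (again Theorem \ref{pod}), the element $e$ is genuinely an idempotent measure in $M(G)$ that happens to lie in $\mathcal{S}(G)$, so Theorem \ref{idem} is applicable to it. This is the only point where minor care is needed, but it is immediate from the subalgebra structure.

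Next, because $G$ is non-compact, Theorem \ref{idem} forbids any idempotent in $\mathcal{S}(G)$ other than $0$ and $\delta_{0}$: an idempotent distinct from these two would force $G$ to be compact, contrary to hypothesis. Hence the only possibilities for $\widehat{e}=\chi_{U}$ are $\widehat{0}\equiv 0$ and $\widehat{\delta_{0}}\equiv 1$, that is $U=\emptyset$ or $U=\triangle(\mathcal{S}(G))$. Therefore $\triangle(\mathcal{S}(G))$ possesses no non-trivial clopen subset and is consequently connected. I do not expect any genuine obstacle here: all the analytic difficulty is already absorbed into Theorem \ref{idem}, and what remains is the purely formal translation between idempotents and clopen sets via \v{S}ilov's theorem.
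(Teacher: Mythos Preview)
Your proof is correct and follows precisely the route indicated in the paper: Theorem~\ref{idem} rules out non-trivial idempotents in $\mathcal{S}(G)$ when $G$ is non-compact, and the \v{S}ilov idempotent theorem then converts this into the absence of non-trivial clopen subsets of $\triangle(\mathcal{S}(G))$. The paper states this in one line; you have simply unfolded the standard details.
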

Since for $\mu\in \mathcal{S}(G)$ we have $\sigma_{\mathcal{S}(G)}(\mu)=\sigma(\mu)$ (consult Lemma \ref{rozl}) and the spectrum is an image of the Gelfand transform we get the last corollary of this section.
\begin{cor}\label{spoj}
Let $G$ be a locally compact non-compact group and let $\mu\in\mathcal{S}(G)$. Then $\sigma(\mu)$ is connected.
\end{cor}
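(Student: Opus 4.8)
The plan is to exhibit $\sigma(\mu)$ as the continuous image of the connected space $\triangle(\mathcal{S}(G))$, so that its connectedness follows at once from the topological principle that a continuous image of a connected set is connected. Almost all of the substantial work has already been carried out in the preceding corollary, which establishes that $\triangle(\mathcal{S}(G))$ is connected when $G$ is non-compact; the present statement is essentially a repackaging of that fact together with spectral permanence.

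First I would observe, invoking part (2) of Lemma \ref{rozl} together with the fact that $\mathcal{S}(G)$ is a closed unital subalgebra of $M(G)$ (Theorem \ref{pod}), that $\mathcal{S}(G)$ is inverse-closed: if $\mu\in\mathcal{S}(G)$ is invertible in $M(G)$ then $\mu^{-1}\in\mathcal{S}(G)$. This yields the spectral permanence identity $\sigma_{\mathcal{S}(G)}(\mu)=\sigma(\mu)$, so it suffices to analyse the spectrum computed inside the algebra $\mathcal{S}(G)$ itself rather than in $M(G)$.

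Next I would recall that for any commutative unital Banach algebra the spectrum of an element coincides with the range of its Gelfand transform; thus $\sigma(\mu)=\sigma_{\mathcal{S}(G)}(\mu)=\widehat{\mu}\bigl(\triangle(\mathcal{S}(G))\bigr)$. The Gelfand transform $\widehat{\mu}$ is, by construction, continuous with respect to the weak$^{\ast}$ topology on $\triangle(\mathcal{S}(G))$. Combining the connectedness of $\triangle(\mathcal{S}(G))$ supplied by the previous corollary with the continuity of $\widehat{\mu}$ then forces $\sigma(\mu)$ to be connected, completing the argument.

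I do not anticipate any genuine obstacle. The only point that demands care is ensuring the spectrum is computed in the right algebra, and this is exactly what the inverse-closedness from part (2) of Lemma \ref{rozl} guarantees; once that identification is in place, the conclusion is nothing more than the standard Gelfand-theoretic dictionary applied to a single elementary topological fact.
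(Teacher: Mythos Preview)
Your argument is correct and matches the paper's own justification essentially verbatim: spectral permanence via the inverse-closedness from Lemma \ref{rozl}(2), followed by the observation that $\sigma(\mu)$ is the continuous image under the Gelfand transform of the connected space $\triangle(\mathcal{S}(G))$ from the preceding corollary.
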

\section{On Parreau measures}
In this section we analyze the spectral properties of measures constructed by F. Parreau in \cite{p}. The results of this part will not be used in the sequel (with the exception of Remark \ref{modp}).
\begin{de}
Let $G$ be a locally compact Abelian group. A Parreau measure on $G$ is a probability measure with real spectrum and all convolution powers singular with respect to the Haar measure on $G$.
\end{de}
We start with a more general class of measures.
\begin{prop}\label{zws}
Let $G$ be a compact Abelian group and let $\mu\in M_{0}(G)$ satisfy $\sigma(\mu)\subset\mathbb{R}$, $\|\mu\|=1$ and $\mu\notin\mathcal{N}(G)$. Then there exists $a,b\geq 0, a+b>0$ and a finite set $S$ of real numbers such that $\sigma(\mu)=I\cup S$ where $I=[-a,b]$.
\end{prop}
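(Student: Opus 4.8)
The plan is to separate $\sigma(\mu)$ into its \emph{natural} part, the closure of the range of $\widehat{\mu}$, and its set of accumulation points, which will become the interval $I$, with the finite set $S$ collecting the remaining isolated points.

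First I would record the elementary structure. Since $\mu\in M_{0}(G)$ and $\widehat{G}$ is discrete, $\widehat{\mu}$ vanishes at infinity, so $\overline{\widehat{\mu}(\widehat{G})}=\widehat{\mu}(\widehat{G})\cup\{0\}$ is a countable compact set whose only accumulation point is $0$; as $\sigma(\mu)\subset\mathbb{R}$ each value $\widehat{\mu}(\gamma)=\gamma(\mu)$ is real, so this set lies in $\mathbb{R}$. If its range were finite then, vanishing at infinity, $\widehat{\mu}$ would be finitely supported and $\mu$ a trigonometric polynomial, hence in $L^{1}(G)\subset\mathcal{N}(G)$; as $\mu\notin\mathcal{N}(G)$ the range is infinite and $0$ is a genuine accumulation point of $\sigma(\mu)$. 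By Lemma \ref{izo} every isolated point of $\sigma(\mu)$ lies in $\widehat{\mu}(\widehat{G})$, so only finitely many isolated points can sit outside any fixed neighbourhood of $0$.

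The decisive step is to show that the derived set $D:=\sigma(\mu)'$ is a single interval $[-a,b]$ containing $0$. Granting this, I would put $I=D$ and $S=\sigma(\mu)\setminus I$: the points of $S$ are isolated, hence belong to $\widehat{\mu}(\widehat{G})$ by Lemma \ref{izo}, and since the only accumulation point of the range, namely $0$, already lies in $I$, the set $S$ has no limit point and is finite. The condition $a+b>0$ is automatic, for $D=\{0\}$ would force every point of $\sigma(\mu)\setminus\{0\}$ to be isolated, giving $\sigma(\mu)\subset\overline{\widehat{\mu}(\widehat{G})}$ and thus $\mu\in\mathcal{N}(G)$, contrary to hypothesis.

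To prove that $D$ is an interval I would argue by contradiction. If $D$ is disconnected, compactness produces a gap $\alpha<\beta$ with $\alpha,\beta\in D$ and $(\alpha,\beta)\cap D=\emptyset$, and since $0\in D$ this gap omits $0$. Points of $\sigma(\mu)$ inside the gap are isolated, hence by Lemma \ref{izo} lie in $\widehat{\mu}(\widehat{G})$ and accumulate only at $0\notin(\alpha,\beta)$; thus near the endpoint $\beta\neq 0$ only finitely many occur and I may choose a regular value $\gamma\in(\alpha,\beta)\setminus\sigma(\mu)$ that splits $\sigma(\mu)$ into two nonempty compact pieces $\sigma(\mu)\cap(-\infty,\gamma)$ and $\sigma(\mu)\cap(\gamma,\infty)$; the \v{S}ilov functional calculus then yields a nontrivial idempotent $e\in M(G)$, with $e\ast\mu\in M_{0}(G)$ and $\sigma(e\ast\mu)$ equal to the upper piece together with $0$. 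Here lies the main obstacle: the bare existence of a nontrivial idempotent is no contradiction in $M(G)$, and the functional calculus alone does not exclude a Cantor-type perfect spectrum — indeed the Wiener--Pitt phenomenon supplies $M_{0}$ measures with non-natural spectral points bounded away from $0$. The reality of $\sigma(\mu)$ must therefore be used decisively to force the perfect part to be connected. The route I expect to work is to show that the upper piece, an uncountable perfect subset of $\mathbb{R}$ bounded away from $0$, cannot occur, by transporting a continuous surjection of a Helson Cantor set onto $\overline{\mathbb{D}}$ into that part of the spectrum — as is done elsewhere in this paper — so as to contradict the naturality governed by Zafran's theorem (Theorem \ref{z}); alternatively one may appeal to Taylor's structure-semigroup description of $\triangle(M(G))$ to see that the functionals detecting the non-natural part map onto a connected subset of $\mathbb{R}$. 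Establishing this connectedness is the crux; the rest is the bookkeeping above.
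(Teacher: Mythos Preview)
Your framework is sound: reducing to the connectedness of the derived set $D$, and reading off the finiteness of $S$ from Lemma~\ref{izo} together with the fact that $0$ is the unique accumulation point of $\widehat{\mu}(\widehat{G})$, is exactly right. The gap is precisely where you flag it, and the routes you propose (Helson--Cantor constructions, Taylor's structure semigroup) are both unnecessary and unlikely to close it cleanly.

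What you are missing is a one--line observation that the paper exploits. Having chosen $\gamma\in(\alpha,\beta)\setminus\sigma(\mu)$ with, say, $\gamma>0$, look at the spectral idempotent $e$ you built. Its Fourier--Stieltjes transform is $\widehat{e}(\chi)=\mathbf{1}_{\{\widehat{\mu}(\chi)>\gamma\}}$, and because $\mu\in M_{0}(G)$ with $\widehat{G}$ discrete, the set $\{\chi\in\widehat{G}:\widehat{\mu}(\chi)>\gamma\}$ is \emph{finite}. Hence $e$ is a trigonometric polynomial, $e\ast\mu$ is a trigonometric polynomial, and $\sigma(e\ast\mu)=\bigl(\sigma(\mu)\cap(\gamma,\infty)\bigr)\cup\{0\}$ is finite. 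But $\beta\in D\cap(\gamma,\infty)$ is an accumulation point of $\sigma(\mu)$, and since $\sigma(\mu)\cap(-\infty,\gamma)$ stays below $\gamma<\beta$, it must be accumulated by points of $\sigma(\mu)\cap(\gamma,\infty)$ --- a contradiction. (The paper phrases this slightly differently: it assumes no interval about $0$ lies in $\sigma(\mu)$, chops $\sigma(\mu)$ into countably many such finite pieces, and invokes Zafran's lemma that $\sigma(\mu)$ is uncountable when $\mu\notin\mathcal{N}(G)$; but the engine is the same ``the idempotent is a trigonometric polynomial'' step.)

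So the ``main obstacle'' dissolves once you use, rather than merely record, that $\mu\in M_{0}(G)$: the $M_{0}$ hypothesis is what forces every spectral piece bounded away from $0$ to be finite.
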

\begin{proof}
Let us prove first that $\sigma(\mu)$ contains a closed interval $J\ni 0$. Assume it is not the case. Then there exist two sequences of positive real numbers $(a_{k})_{k\in\mathbb{N}}$ and $(b_{n})_{k\in\mathbb{N}}$ convergent to $0$ with $-a_{k}<-a_{k+1}$, $b_{k}>b_{k+1}$ and $-a_{k},b_{k}\notin\sigma(\mu)$ for $k\geq 1$. Put $-a_{0}<-1$ and $b_{0}>1$. We have an obvious inclusion:
\begin{equation}\label{zawier}
\sigma(\mu)\subset (-a_{0},-a_{1})\cup\bigcup_{k=1}^{\infty}(-a_{k},-a_{k+1})\cup\left\{0\right\}\cup\bigcup_{k=1}(b_{k+1},b_{k})\cup (b_{1},b_{0})
\end{equation}
We will justify now that for every $k\in\mathbb{N}\cup\{0\}$ the set $\sigma(\mu)\cap (b_{k+1},b_{k})$ is finite (a similar statement on the sets $\sigma(\mu)\cap (-a_{k},-a_{k+1})$ can be proved analogously). It is enough to establish this result for $k=0$ as we will see that the argument for other $k's$ is identical. Put $A:=\sigma(\mu)\cap (-a_{0},b_{1})$ and $B:=\sigma(\mu)\cap (a_{1},a_{0})$. Then $\sigma(\mu)=A\cup B$, $A\cap B=\emptyset$ and we can find two disjoint open subsets $\widetilde{A}$ and $\widetilde{B}$ of the complex plane satisfying $\widetilde{A}\cap\sigma(\mu)=A$, $\widetilde{B}\cap\sigma(\mu)=B$ with $\sigma(\mu)\subset\widetilde{A}\cup\widetilde{B}$. Let us define $f:\widetilde{A}\cup\widetilde{B}\rightarrow\mathbb{C}$ by $f|_{\widetilde{A}}=0$ and $f|_{\widetilde{B}}=\mathrm{id}$. Consider $f(\mu)$ being an image of $\mu$ of the action of the functional calculus. Recalling that $\mu\in M_{0}(G)$ there are only finitely many $\gamma\in\widehat{G}$ for which $\widehat{\mu}(\gamma)\in (b_{1},b_{0})$ and hence $f(\mu)$ is a trigonometric polynomial. As $\mu=(\mathrm{id}-f)(\mu)+f(\mu)$ we obtain $\sigma(\mu)\cap (b_{1},b_{0})=\sigma(f(\mu))$. Thus $\sigma(\mu)\cap (b_{1},b_{0})$ is a finite set. By (\ref{zawier}), $\sigma(\mu)$ is countable as a countable union of finite sets. This is clearly a contradiction as $\sigma(\mu)$ has to be uncountable (check Lemma 2.6 in \cite{Zafran}).
\end{proof}
The following theorem shows that the spectrum of a Parreau measure on a compact Abelian group, despite of its very involved construction, has quite simple structure.
\begin{thm}
Let $G$ be a compact Abelian group and let $\mu\in M_{0}(G)$ be a Parreau measure. Then for some $c\in [0,1]$ we have $\sigma(\mu)=[-c,1]\cup S$ or $\sigma(\mu)=[-1,c]\cup S$ where $S$ is a finite subset of $\mathbb{R}$.
\end{thm}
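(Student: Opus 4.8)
The plan is to run the spectral analysis of Proposition \ref{zws} and then pin down the endpoints of the resulting interval using the convolution structure. First I would record the elementary consequences of $\mu$ being a probability measure: $\|\mu\|=\mu(G)=1$, the spectral radius satisfies $r(\mu)=\lim_{n}\|\mu^{\ast n}\|^{1/n}=1$ (all powers are again probability measures), and $1=\widehat{\mu}(0)\in\widehat{\mu}(\widehat{G})\subset\sigma(\mu)$. Combined with $\sigma(\mu)\subset\mathbb{R}$ this forces $\sigma(\mu)\subset[-1,1]$ and $\max\sigma(\mu)=1$; moreover, since $\widehat{\mu}(\widehat{G})$ lands inside the real spectrum, $\widehat{\mu}$ is real-valued and $\mu$ is Hermitian. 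As Parreau's measures are precisely the real-spectrum instances of the Wiener--Pitt phenomenon, $\mu\notin\mathcal{N}(G)$ (see \cite{p}), so Proposition \ref{zws} applies and yields $\sigma(\mu)=[-a,b]\cup S$ with $0\le a,b\le 1$, $a+b>0$ and $S$ finite. In these terms the whole theorem reduces to the single claim $\max(a,b)=1$: if $b=1$ we get $\sigma(\mu)=[-a,1]\cup S$ with $c=a$, and if $a=1$ we get $\sigma(\mu)=[-1,b]\cup S$ with $c=b$.

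The key device is to pass to $\rho:=\mu^{\ast 2}$. By the spectral mapping theorem $\sigma(\rho)=\{\lambda^{2}:\lambda\in\sigma(\mu)\}=[0,\max(a,b)^{2}]\cup S^{2}\subset[0,1]$, and $\rho$ is again a probability measure in $M_{0}(G)$ all of whose convolution powers $\rho^{\ast n}=\mu^{\ast 2n}$ are singular, so $\rho$ is itself a Parreau measure. The purpose of squaring is to remove the sign ambiguity at the edge of the spectrum: for $\rho$ the identity $\max(a,b)^{2}=1$ is equivalent to saying that $1=\max\sigma(\rho)$ is \emph{not} an isolated point of $\sigma(\rho)$, since the finite set $S^{2}$ cannot accumulate at $1$, and the only way $1$ can fail to be isolated is that the interval $[0,\max(a,b)^{2}]$ reaches it.

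The main obstacle is to exclude the possibility that $1$ is isolated in $\sigma(\rho)$, and this is where the singularity of the powers finally enters. Assuming $1$ isolated, I would form the Riesz idempotent $e\in M(G)$ associated with the spectral set $\{1\}$. Its Gelfand transform is the indicator of $\{\varphi\in\triangle(M(G)):\widehat{\rho}(\varphi)=1\}$, so on $\widehat{G}$ one has $\widehat{e}=\mathbf{1}_{\Gamma_{1}}$ with $\Gamma_{1}:=\{\gamma\in\widehat{G}:\widehat{\rho}(\gamma)=1\}$. Because $\rho$ is a probability measure, $\widehat{\rho}(\gamma)=1$ forces $\gamma=1$ $\rho$-a.e., so $\Gamma_{1}$ is a subgroup, and because $\rho\in M_{0}(G)$ it is finite. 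Hence $\mathbf{1}_{\Gamma_{1}}=\widehat{m_{K}}$ for the normalized Haar measure $m_{K}$ of the open finite-index subgroup $K=\Gamma_{1}^{\perp}$, and the uniqueness theorem for Fourier--Stieltjes transforms gives $e=m_{K}$; in particular $e\in L^{1}(G)$ is absolutely continuous. A direct transform computation, using $\widehat{\rho}\equiv 1$ on $\Gamma_{1}$, also shows $\rho^{\ast k}\ast e=e$ for every $k\ge 0$.

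With these facts in hand I would set $\nu:=\rho-e=\rho\ast(\delta_{0}-e)$. A binomial expansion, using $\rho^{\ast k}\ast e=e$ and $e\ast e=e$, collapses to $\nu^{\ast n}=\rho^{\ast n}-m_{K}$ for every $n\ge 1$. Now $\rho^{\ast n}$ is singular while $m_{K}$ is absolutely continuous, so the two are mutually singular and $\|\nu^{\ast n}\|=\|\rho^{\ast n}\|+\|m_{K}\|=2$, whence $r(\nu)=\lim_{n}2^{1/n}=1$. On the other hand, the Riesz decomposition gives $\sigma(\nu)=\{0\}\cup(\sigma(\rho)\setminus\{1\})\subset[0,1)$, a compact set bounded away from $1$, so $r(\nu)=\max|\sigma(\nu)|<1$ --- a contradiction. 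Therefore $1$ is not isolated in $\sigma(\rho)$, which forces $\max(a,b)=1$ and hence the stated dichotomy for $\sigma(\mu)$. I expect the step identifying the Riesz idempotent with an absolutely continuous Haar measure, and the ensuing clash with singularity, to be the crux; the squaring trick is what makes that clash one-sided and therefore decisive.
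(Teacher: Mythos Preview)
Your argument is correct, and it takes a genuinely different route from the paper's proof. Both proofs must first show $\mu\notin\mathcal{N}(G)$ to invoke Proposition~\ref{zws}, and both must then rule out $\max(a,b)<1$. The paper subtracts a trigonometric polynomial $\rho$ with $\widehat{\rho}=\widehat{\mu}$ on the finite set $\{\gamma:|\widehat{\mu}(\gamma)|=1\}$, observes that $\nu=\mu-\rho$ has $\|\widehat{\nu}\|_{\infty}<1$ but $r(\nu)\geq 1$ (since the singular part of $\nu^{\ast n}$ is $\mu^{\ast n}$), hence some $\varphi\notin\widehat{G}$ has $\varphi(\nu)=\pm 1$; Lemma~\ref{izo} then forces $\pm 1$ to be a limit point of $\sigma(\nu)$, and since $\varphi_{n}(\rho)=0$ for $\varphi_{n}\notin\widehat{G}$ this transfers to $\sigma(\mu)$. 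You instead square to $\mu^{\ast 2}$ to make the problem one-sided, identify the Riesz idempotent for $\{1\}$ as the absolutely continuous Haar measure $m_{K}$ via uniqueness of the Fourier--Stieltjes transform, and obtain the same $r\geq 1$ versus $r<1$ clash directly. Your approach trades the appeal to Lemma~\ref{izo} for the identification $e=m_{K}$; it is self-contained and arguably more conceptual, while the paper's route is shorter once Lemma~\ref{izo} is in hand and simultaneously yields the intermediate fact $\mu\notin\mathcal{N}(G)$.

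One loose end: you invoke $\mu\notin\mathcal{N}(G)$ by citing \cite{p}, but the definition of a Parreau measure adopted here (probability, real spectrum, all powers singular) does not literally include non-naturality, and Parreau's paper is a construction rather than a characterization. The paper establishes $\mu\notin\mathcal{N}(G)$ by exactly the trigonometric-polynomial argument just described; alternatively, your own Riesz-idempotent computation already gives this, since the contradiction $r(\nu)=1>r(\nu)$ you derive shows $1$ is not isolated in $\sigma(\mu^{\ast 2})$, whereas $\mu\in\mathcal{N}(G)\cap M_{0}(G)=M_{00}(G)$ would force $\widehat{\mu^{\ast 2}}(\varphi)=0$ for all $\varphi\notin\widehat{G}$ and hence make $\{\gamma:\widehat{\mu^{\ast 2}}(\gamma)=1\}$ a finite isolated piece of $\sigma(\mu^{\ast 2})$. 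Either patch is short; just do not leave the step to a bare citation.
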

\begin{proof}
Let $L:=\{\gamma\in\widehat{G}:|\widehat{\mu}(\gamma)|=1\}$. Since $\mu\in M_{0}(G)$, the set $L$ is finite. Let $\rho$ be a trigonometric polynomial such that $\widehat{\rho}(\gamma)=\widehat{\mu}(\gamma)$ for $\gamma\in L$ and $\widehat{\rho}(\gamma)=0$ for $\gamma\notin L$. Consider $\nu:=\mu-\rho$. Then, recalling once again that $\mu\in M_{0}(G)$, we get $\|\widehat{\nu}\|_{\infty}<1$. Using the spectral radius formula and observing that the singular part of $\nu^{\ast n}$ is equal to $\mu^{\ast n}$ for every $n\in\mathbb{N}$ we obtain $r(\nu)\geq 1$ showing $\nu\notin\mathcal{N}(G)$. As trigonometric polynomials are spectrally reasonable (compare Theorem \ref{nato}), it implies $\mu\notin\mathcal{N}(G)$. By Proposition \ref{zws}, $\sigma(\mu)=[-a,b]\cup S$ where $a,b\geq 0, a+b>0$ and $S$ is a finite set of real numbers.
\\
To justify the particular form of the interval $[-a,b]$, let us take $\varphi\in\triangle(M(G))\setminus\widehat{G}$ satisfying $|\varphi(\nu)|=1$. As $\sigma(\nu)\subset\mathbb{R}$ we in fact have $\varphi(\nu)\in\{-1,1\}$. Without loss of generality, let $\varphi(\nu)=1$. Clearly, $1\notin\overline{\widehat{\nu}(\widehat{G})}$ and by Lemma \ref{izo} we conclude that $1$ is not an isolated point of $\sigma(\nu)$. Thus, we find a sequence $(\varphi_{n})_{n\in\mathbb{N}}\subset\triangle(M(G))\setminus\widehat{G}$ for which $\varphi_{n}(\nu)\xrightarrow[n\rightarrow\infty]{}1$. But $L^{1}(G)\subset M_{00}(G)$ which gives $\varphi_{n}(\mu)\xrightarrow[n\rightarrow\infty]{}1$. Therefore, $\sigma(\mu)$ contains a sequence converging to $1$ which finishes the proof.
\end{proof}
\begin{rem}\label{modp}
The existence of Parreau measures allows a construction of measures with non-natural spectrum contained in the unit circle (consult the first part of the proof of Theorem 24 in \cite{ow}). Such measures will be called modified Parreau measures.
\end{rem}

\section{Measures with 'fat' Fourier-Stieltjes transforms}
The aim of this section is to prove that the Fourier-Stietljes transform of a spectrally reasonable measure cannot map compact subsets of the dual group to 'large' (in topological sense) subsets of the spectrum. This result implies that there are no non-trivial spectrally reasonable measures on $\mathbb{R}^{n}$.
\\
We start with two simple lemmas.
\begin{lem}\label{domk}
Let $\mu\notin\mathcal{N}(G)$. Then $\sigma(\mu)\setminus\overline{\widehat{\mu}(\widehat{G})}$ is not a closed subset of $\sigma(\mu)$.
\end{lem}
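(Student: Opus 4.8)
The plan is to argue by contradiction, turning a topological separation of the spectrum into an idempotent measure whose forced vanishing on $\widehat{G}$ makes it zero. Write $R:=\overline{\widehat{\mu}(\widehat{G})}$. Since every $\gamma\in\widehat{G}$ acts as a multiplicative-linear functional on $M(G)$, we always have $R\subseteq\sigma(\mu)$, and because $\mu\notin\mathcal{N}(G)$ this inclusion is strict by the very definition of a natural spectrum; hence $D:=\sigma(\mu)\setminus R$ is nonempty. Suppose, towards a contradiction, that $D$ is closed in $\sigma(\mu)$.

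Since $\sigma(\mu)$ is compact and $R$ is closed, the assumption makes $D$ and $R$ two \emph{disjoint compact} subsets of $\mathbb{C}$ whose union is $\sigma(\mu)$. I would then separate them by disjoint open sets $U\supseteq D$, $V\supseteq R$ and apply the holomorphic functional calculus to the function $f$ defined by $f\equiv 1$ on $U$ and $f\equiv 0$ on $V$ (holomorphic, being locally constant, on the neighborhood $U\cup V$ of $\sigma(\mu)$). Setting $e:=f(\mu)\in M(G)$ we obtain an idempotent, and, by compatibility of the functional calculus with the Gelfand transform, $\widehat{e}=f\circ\widehat{\mu}$ on all of $\triangle(M(G))$.

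The contradiction comes from evaluating $\widehat{e}$ in two ways. For $\gamma\in\widehat{G}$ we have $\widehat{\mu}(\gamma)\in R\subseteq V$, so $\widehat{e}(\gamma)=f(\widehat{\mu}(\gamma))=0$; thus the Fourier-Stieltjes transform of $e$ vanishes identically, and by the uniqueness theorem $e=0$. On the other hand, $D\neq\emptyset$ supplies some $\varphi_{0}\in\triangle(M(G))$ with $\widehat{\mu}(\varphi_{0})\in D\subseteq U$, whence $\widehat{e}(\varphi_{0})=1$ and $e\neq 0$ — a contradiction. (Alternatively, one may manufacture the same $e$ by invoking the Shilov idempotent theorem applied to the clopen splitting $\sigma(\mu)=D\sqcup R$.)

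The only genuinely delicate point is the production of the idempotent $e$: everything hinges on the fact that a clopen splitting $\sigma(\mu)=D\sqcup R$ yields, through the functional calculus, a measure whose Gelfand transform is the indicator of the $D$-part. Once $e$ is in hand the argument is immediate, since the two facts $\widehat{e}|_{\widehat{G}}\equiv 0\Rightarrow e=0$ (injectivity of the Fourier-Stieltjes transform) and $D\neq\emptyset\Rightarrow e\neq 0$ are incompatible. I expect no further obstacles beyond checking these standard compatibilities.
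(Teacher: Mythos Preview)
Your argument is correct and is essentially the same as the paper's: both assume the complement is closed, separate the two compact pieces $D$ and $R$ by disjoint open sets, apply the holomorphic functional calculus to the locally constant function $f=\chi_{U}$, and derive a contradiction. You have merely made explicit what the paper compresses into ``applying the functional calculus results immediately in contradiction'': namely, that the resulting idempotent has identically zero Fourier--Stieltjes transform (hence is $0$) yet satisfies $\varphi_{0}(e)=1$ for some $\varphi_{0}$ hitting $D$.
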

\begin{proof}
Since $\mu\notin\mathcal{N}(G)$ the set $\sigma(\mu)\setminus\overline{\widehat{\mu}(\widehat{G})}$ is non-empty. Suppose that $\sigma(\mu)\setminus\overline{\widehat{\mu}(\widehat{G})}$ is closed. Then we can easily find two disjoint open sets $U,V\subset\mathbb{C}$ such that $\sigma(\mu)\setminus\overline{\widehat{\mu}(\widehat{G})}\subset U$ and $\overline{\widehat{\mu}(\widehat{G})}\subset V$. Defining a function $f:U\cup V\rightarrow\mathbb{C}$ by the formula: $f|_{U}=1$ and $f|_{V}=0$ and applying the functional calculus results immediately in contradiction.
\end{proof}
\begin{lem}\label{glup}
Let $\mu,\nu\in M(G)$ satisfy $\mu\ast\nu=0$, $\mu+\nu\in\mathcal{N}(G)$ and $\#\sigma(\nu)\cap\widehat{\mu}\left(\triangle(M(G))\setminus\widehat{G}\right)<\infty$. Then $\mu\in\mathcal{N}(G)$.
\end{lem}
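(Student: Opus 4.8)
The plan is to argue by contradiction and feed the outcome into Lemma \ref{domk}. Suppose $\mu\notin\mathcal{N}(G)$ and set $B:=\sigma(\mu)\setminus\overline{\widehat{\mu}(\widehat{G})}$, which is then non-empty. By Lemma \ref{domk} the set $B$ is not closed in $\sigma(\mu)$; since a finite subset of $\mathbb{C}$ is always closed, it therefore suffices to show that, under the hypotheses, $B$ is \emph{finite} — this is the contradiction I am after. Throughout I use that the range of the Gelfand transform is the whole spectrum, so $\sigma(\mu)=\widehat{\mu}(\triangle(M(G)))$, and likewise for $\nu$ and $\mu+\nu$.

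The crucial consequence of $\mu\ast\nu=0$ that I would extract is pointwise: applying any $\varphi\in\triangle(M(G))$ gives $\varphi(\mu)\varphi(\nu)=0$, so for each functional, and in particular for each character $\gamma\in\widehat{G}$, at least one of $\widehat{\mu},\widehat{\nu}$ vanishes. Now fix $\lambda\in B$ with $\lambda\neq 0$. Since $\lambda\in\sigma(\mu)$ but $\lambda\notin\overline{\widehat{\mu}(\widehat{G})}$, I may write $\lambda=\widehat{\mu}(\varphi)$ for some $\varphi\in\triangle(M(G))\setminus\widehat{G}$; in particular $\lambda\in\widehat{\mu}(\triangle(M(G))\setminus\widehat{G})$. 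As $\varphi(\mu)=\lambda\neq 0$, the relation $\varphi(\mu)\varphi(\nu)=0$ forces $\varphi(\nu)=0$, whence $\varphi(\mu+\nu)=\lambda$ and thus $\lambda\in\sigma(\mu+\nu)$. Because $\mu+\nu\in\mathcal{N}(G)$, we have $\lambda\in\overline{(\mu+\nu)\widehat{\phantom{x}}(\widehat{G})}$, so there is a sequence of characters $\gamma_{j}$ with $\widehat{\mu}(\gamma_{j})+\widehat{\nu}(\gamma_{j})\to\lambda$.

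Here the pointwise dichotomy does the work. Since $\lambda\neq 0$, only finitely many $\gamma_{j}$ can have $\widehat{\mu}(\gamma_{j})=\widehat{\nu}(\gamma_{j})=0$ (otherwise a subsequence of sums would converge to $0\neq\lambda$), so after discarding these each remaining $\gamma_{j}$ has exactly one of the two transforms non-zero. If infinitely many $\gamma_{j}$ satisfy $\widehat{\nu}(\gamma_{j})=0$, then along that subsequence $\widehat{\mu}(\gamma_{j})=\widehat{\mu}(\gamma_{j})+\widehat{\nu}(\gamma_{j})\to\lambda$, giving $\lambda\in\overline{\widehat{\mu}(\widehat{G})}$ and contradicting $\lambda\in B$. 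Hence all but finitely many $\gamma_{j}$ have $\widehat{\mu}(\gamma_{j})=0$, so $\widehat{\nu}(\gamma_{j})=\widehat{\mu}(\gamma_{j})+\widehat{\nu}(\gamma_{j})\to\lambda$ and therefore $\lambda\in\overline{\widehat{\nu}(\widehat{G})}\subset\sigma(\nu)$. Combining the two memberships, every $\lambda\in B\setminus\{0\}$ lies in $\sigma(\nu)\cap\widehat{\mu}(\triangle(M(G))\setminus\widehat{G})$, which is finite by hypothesis. Thus $B$ is finite, hence closed, contradicting Lemma \ref{domk}; therefore $\mu\in\mathcal{N}(G)$.

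I would single out as the main (and essentially only) conceptual step the passage from $\mu\ast\nu=0$ to the pointwise disjointness of the two Fourier--Stieltjes transforms on $\widehat{G}$, combined with the observation that the natural-spectrum hypothesis on $\mu+\nu$ lets one approximate each non-zero bad value of $\mu$ by characters; the finiteness hypothesis then traps all these values inside $\sigma(\nu)$. Everything else is bookkeeping. The only points needing a little care are the handling of $\lambda=0$ (harmless, as removing one point cannot affect finiteness) and the fact that the relevant closures are taken in $\mathbb{C}$, so ordinary sequences — rather than nets — suffice even though $\widehat{G}$ need not be metrizable.
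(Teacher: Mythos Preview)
Your proof is correct and follows essentially the same path as the paper's: reduce via Lemma~\ref{domk} to showing that $B=\sigma(\mu)\setminus\overline{\widehat{\mu}(\widehat{G})}$ is finite, then for each nonzero $\lambda\in B$ use $\mu\ast\nu=0$ and the naturality of $\sigma(\mu+\nu)$ to pass to a sequence of characters along which $\widehat{\nu}\to\lambda$, trapping $\lambda$ inside the finite set $\sigma(\nu)\cap\widehat{\mu}(\triangle(M(G))\setminus\widehat{G})$. The only cosmetic difference is that you phrase the application of Lemma~\ref{domk} as a contradiction, while the paper states directly that it suffices to show $B$ is closed; your handling of $\lambda=0$ is in fact slightly more explicit than the paper's.
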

\begin{proof}
By Lemma \ref{domk} it is enough to show that $\sigma(\mu)\setminus\overline{\widehat{\mu}(\widehat{G})}$ is closed. Since the inclusion $\sigma(\mu)\setminus\overline{\widehat{\mu}(\widehat{G})}\subset \widehat{\mu}\left(\triangle(M(G))\setminus\widehat{G}\right)$ is obvious it is sufficient to justify $\sigma(\mu)\setminus\overline{\widehat{\mu}(\widehat{G})}\subset\sigma(\nu)$ as then the assertion follows from the last assumption of the lemma.
\\
Let $0\neq\lambda\in\sigma(\mu)\setminus\overline{\widehat{\mu}(\widehat{G})}$ and let us take $\varphi\in\triangle(M(G))\setminus \widehat{G}$ for which $\varphi(\mu)=\lambda$. Recalling that $\mu\ast\nu=0$ we obtain $\varphi(\nu)=0$. This leads to
\begin{equation*}
\varphi(\mu)=\varphi(\mu+\nu)\in\overline{\left(\widehat{\mu}+\widehat{\nu}\right)(\widehat{G})}\text{ as $\mu+\nu\in\mathcal{N}(G)$}.
\end{equation*}
Let $(\gamma_{n})_{n\in\mathbb{N}}$ be a sequence of characters satisfying $(\widehat{\mu}+\widehat{\nu})(\gamma_{n})\xrightarrow[n\rightarrow\infty]{}\lambda$. Since $\mu\ast\nu=0$, passing to a subsequence if necessary, we are allowed to assume that $\widehat{\nu}(\gamma_{n})\xrightarrow[n\rightarrow\infty]{}\lambda$ since otherwise $\lambda\in\overline{\widehat{\mu}(\widehat{G})}$. This gives $\lambda\in\sigma(\nu)$ and finishes the proof.
\end{proof}

The proof of the following result, sometimes called 'boundary bumping theorem', can be found in the Section 5 of \cite{n}.
\begin{thm}[Janiszewski]\label{cont}
Let $X$ be a non-trivial continuum\footnote{A non-trivial continuum is a compact connected topological space which is not a single point.}. and let $U\subset X$ be an open subset of $X$. Then $U$ contains a non-trivial continuum.
\end{thm}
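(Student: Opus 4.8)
The plan is to deduce the existence of a non-degenerate subcontinuum inside $U$ from the classical ``boundary bumping'' mechanism, whose engine is the fact that in a compact Hausdorff space the connected components coincide with the quasi-components (the \v{S}ura--Bura lemma). First I would dispose of the trivial case $U=X$, in which $X$ itself is the desired non-trivial continuum, and so assume $U\subsetneq X$. Fixing a point $p\in U$ and using that a compact Hausdorff space is regular, I would choose an open set $V$ with $p\in V\subset\overline{V}\subset U$; note $V\subsetneq X$ since $\overline{V}\subset U\subsetneq X$. It then suffices to produce a non-degenerate subcontinuum of $\overline{V}$, as any such continuum automatically lies in $U$.

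Let $C$ be the connected component of $p$ in the compact Hausdorff space $\overline{V}$. The heart of the argument is to show $C\cap\partial V\neq\emptyset$, where $\partial V=\overline{V}\setminus V$ (the last equality holding because $V$ is open). By the \v{S}ura--Bura lemma, $C$ equals the quasi-component of $p$, that is, the intersection of all subsets of $\overline{V}$ which are clopen in $\overline{V}$ and contain $p$. If $C\cap\partial V=\emptyset$, then since $\partial V$ is compact and the clopen-in-$\overline{V}$ sets containing $p$ form a family stable under finite intersection, a standard compactness argument yields a single set $D$, clopen in $\overline{V}$, with $p\in D$ and $D\cap\partial V=\emptyset$. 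The disjointness forces $D\subset\overline{V}\setminus\partial V=V$.

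I would then upgrade $D$ to a clopen subset of $X$ itself: being closed in $\overline{V}$, and $\overline{V}$ closed in $X$, the set $D$ is closed in $X$; being open in $\overline{V}$ we may write $D=G\cap\overline{V}$ for some open $G\subset X$, and since $D\subset V$ this gives $D=G\cap V$, which is open in $X$. Thus $D$ is a non-empty clopen subset of the connected space $X$, whence $D=X$, contradicting $D\subset V\subsetneq X$. Therefore $C\cap\partial V\neq\emptyset$. Since $C$ contains $p\in V$ together with a point of $\partial V\subset X\setminus V$, it has at least two points; as a component of the compact Hausdorff space $\overline{V}$ it is closed, hence compact, and it is connected by definition. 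So $C$ is a non-trivial continuum contained in $\overline{V}\subset U$, as required.

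The main obstacle, and the only genuinely non-elementary ingredient, is the passage from quasi-components to components supplied by the \v{S}ura--Bura lemma: without it the compactness argument produces merely a clopen-separated neighbourhood of $p$, which need not be connected, whereas the conclusion demands an honest subcontinuum. Everything else---the reduction via regularity, the clopen bookkeeping, and the final appeal to the connectedness of $X$---is routine point-set topology.
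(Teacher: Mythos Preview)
Your argument is correct. The paper does not actually prove this statement; it merely cites Nadler's \emph{Continuum Theory}, Section~5, where the boundary bumping theorem is established. Your proof is precisely the standard one found there: reduce to a proper open set, shrink via regularity to $V$ with $\overline{V}\subset U$, take the component $C$ of a point $p$ in $\overline{V}$, and use the \v{S}ura--Bura lemma plus compactness of $\partial V$ to force $C$ to meet $\partial V$, yielding a non-degenerate subcontinuum of $U$. The bookkeeping in your step upgrading $D$ from clopen-in-$\overline{V}$ to clopen-in-$X$ is clean, and the final contradiction with the connectedness of $X$ is exactly the point.

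Two minor remarks. First, the paper's footnote defines a continuum as a ``compact connected topological space'' without mentioning Hausdorff, but in the application (to $\sigma(\mu)\subset\mathbb{C}$) everything is metric, so your Hausdorff assumption---needed both for regularity and for \v{S}ura--Bura---is entirely appropriate. Second, the statement as written omits the hypothesis $U\neq\emptyset$; you implicitly supply it by choosing $p\in U$, which matches how the result is invoked in Theorem~\ref{glon}, where $U$ is the non-empty interior of $\widehat{\mu}(K)$ in $\sigma(\mu)$.
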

\begin{thm}\label{glon}
Let $G$ be a locally compact non-compact Abelian group and let $\mu\in M(G)$ be a measure different than the constant multiple of $\delta_{0}$. If there exists a compact set $K\subset\widehat{G}$ such that $\widehat{\mu}(K)$ has non-empty interior (as a subspace of $\sigma(\mu)$) then $\mu\notin\mathcal{S}(G)$.
\end{thm}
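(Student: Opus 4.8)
The plan is to argue by contradiction, so suppose $\mu\in\mathcal{S}(G)$. Since $\mathcal{S}(G)\subseteq\mathcal{N}(G)$ we have $\sigma(\mu)=\overline{\widehat{\mu}(\widehat{G})}$, and as $\mu$ is not a constant multiple of $\delta_{0}$ its transform is non-constant, so $\sigma(\mu)$ has at least two points. By Corollary \ref{spoj} (this is where non-compactness of $G$ is used) $\sigma(\mu)$ is connected, hence a non-trivial continuum. The hypothesis supplies a compact $K\subseteq\widehat{G}$ for which $U:=\mathrm{Int}_{\sigma(\mu)}\widehat{\mu}(K)\neq\emptyset$; applying Janiszewski's theorem (Theorem \ref{cont}) to the continuum $\sigma(\mu)$ and the relatively open set $U$ yields a non-trivial subcontinuum $P\subseteq U\subseteq\widehat{\mu}(K)$. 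Thus a whole non-degenerate continuum of spectral values is already attained by $\widehat{\mu}$ on the single compact set $K$.

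I would then contradict reasonableness in its multiplicative form: by Lemma \ref{rozl} it suffices to produce a measure $\omega\in\mathcal{N}(G)$ with $\mu\ast\omega\notin\mathcal{N}(G)$. The mechanism is the one behind the Wiener--Pitt phenomenon for products: for a truly hidden functional $\varphi_{0}\in\triangle(M(G))\setminus\overline{\widehat{G}}$ one has $\varphi_{0}(\mu\ast\omega)=\varphi_{0}(\mu)\varphi_{0}(\omega)$, and because the pairing of the two transforms along $\varphi_{0}$ need not be realised by any common net of characters, the value $\varphi_{0}(\mu)\varphi_{0}(\omega)$ can fall outside $\overline{(\widehat{\mu}\widehat{\omega})(\widehat{G})}$ even though both factors are natural. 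Note that $\omega$ must be chosen outside $M_{00}(G)$ (otherwise $\varphi_{0}(\omega)=0$ makes the product trivially natural), so a good candidate is a discrete measure $\omega\in M_{d}(G)\subseteq\mathcal{N}(G)$ carrying genuine hidden values.

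To manufacture $\omega$ I would exploit the fatness of $\widehat{\mu}(K)$ together with the compactness of $K$. Choosing inside $K$ a Helson set $C$ homeomorphic to the Cantor set on which $\widehat{\mu}$ still sweeps out a non-degenerate subcontinuum of $P$ (using the machinery of Theorems \ref{idem} and \ref{abso}), I would transport a disk-filling transform through $C$ to build $\omega\in M_{d}(G)$ so that the hidden functionals sitting over $C$ see a prescribed value $w\neq 0$ of $\widehat{\omega}$ alongside a value $\lambda\in P$ of $\widehat{\mu}$. Then $\lambda w=\varphi_{0}(\mu\ast\omega)\in\sigma(\mu\ast\omega)$ automatically, while the compactness of $K$ is what lets me keep $\widehat{\mu}\,\widehat{\omega}$ under control away from $C$ and so separate $\lambda w$ from $\overline{(\widehat{\mu}\widehat{\omega})(\widehat{G})}$. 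The interior property of $P$ should guarantee, in the spirit of Lemma \ref{domk}, that this escaping value is not a removable isolated point but genuinely certifies $\mu\ast\omega\notin\mathcal{N}(G)$; combined with $\omega\in\mathcal{N}(G)$ and Lemma \ref{rozl} this contradicts $\mu\in\mathcal{S}(G)$.

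The main obstacle is precisely this construction and its verification, and it is here that all three hypotheses are consumed: non-compactness gives connectedness of $\sigma(\mu)$, the exclusion of $\mu\in\mathrm{lin}(\delta_{0})$ makes $\sigma(\mu)$ non-degenerate so that Janiszewski applies, and fatness on the \emph{compact} set $K$ furnishes both the Helson--Cantor structure over which a hidden value can be planted and the localisation needed to show the resulting spectral value escapes the closed range of the transform. Making the bookkeeping consistent -- keeping $\omega$ natural while forcing $\mu\ast\omega$ to display the Wiener--Pitt phenomenon, and certifying the escape with the help of Lemmas \ref{domk} and \ref{glup} -- is the technical heart of the argument, and the delicate point is linking the value $\lambda$ of $\widehat{\mu}$ realised on $K$ to the hidden functional $\varphi_{0}$ at which $\widehat{\omega}$ attains $w$.
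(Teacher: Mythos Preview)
Your outline is not a proof yet, and the gap you flag yourself at the end is in fact the decisive one. You want a hidden functional $\varphi_{0}\in\triangle(M(G))\setminus\widehat{G}$ with $\varphi_{0}(\mu)=\lambda$ for some prescribed $\lambda\in P\subset\widehat{\mu}(K)$. But $P$ is, by construction, a set of values of $\widehat{\mu}$ at \emph{genuine characters} $\gamma\in K\subset\widehat{G}$; nothing forces any hidden functional to hit $P$ when evaluated on $\mu$, and there is no mechanism in your sketch to create that link. Without it the whole ``multiplicative Wiener--Pitt'' scheme collapses. There is also a mismatch in your toolbox: a Helson set $C\subset K\subset\widehat{G}$ produces functions in $L^{1}(G)$ (that is the very definition), not elements of $M_{d}(G)$; the discrete measures with prescribed hidden values in Theorem~\ref{idem} come from Helson sets in the \emph{Bohr compactification}, an entirely different object, so ``the machinery of Theorems~\ref{idem} and~\ref{abso}'' does not transplant to a compact $K$ inside $\widehat{G}$ in the way you suggest.

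The paper sidesteps the linking problem rather than solving it. From $\mu$ it passes first to $\tau=\mu\ast\widetilde{\mu}$ (real spectrum) and then, by exponentiating or taking convolution powers, to a measure $\nu\in\mathcal{S}(G)$ with $\widehat{\nu}(K')$ equal to the \emph{entire} unit circle. Now no matching is needed: every hidden value of the modified Parreau measure $\omega$ (whose spectrum lies in $\mathbb{T}$) is automatically in $\widehat{\nu}(K')$, hence in the closed range of the transform of $\rho:=f\ast\nu+(\delta_{0}-g)\ast\omega$, where $f,g\in L^{1}(G)$ are the standard cut-offs with $\widehat{f}=1$ on $K'$ and $\widehat{g}=1$ on $\operatorname{supp}\widehat{f}$. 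This yields $\rho\in\mathcal{N}(G)$; invertibility of $\nu$ (it is an exponential, resp.\ a power with unimodular transform) and Lemma~\ref{rozl} give $\rho\ast\nu^{-1}\in\mathcal{N}(G)$, and Lemma~\ref{glup} then strips off the absolutely continuous piece to force $(\delta_{0}-g)\ast\omega\in\mathcal{N}(G)$, contradicting the choice of $\omega$. The moral: rather than aiming a hidden functional at a particular value of $\widehat{\mu}$, one enlarges the visible range of a derived measure until it swallows \emph{all} possible hidden values of the Parreau witness.
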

\begin{proof}
Assume, towards contradiction, that $\mu\in\mathcal{S}(G)$. Let us consider a measure $\tau:=\mu\ast\widetilde{\mu}$. Since $\mathcal{S}(G)$ is a $\ast$-algebra we have $\tau\in\mathcal{S}(G)$ and $\sigma(\tau)\subset\mathbb{R}$ (consult Theorem \ref{pod}). By Corollary \ref{spoj}, $\sigma(\tau)$ is connected. Let $Y$ be a non-trivial continuum in $\widehat{\mu}(K)$ from Theorem \ref{cont} and let $K'$ be the preimage of $Y$. Then $K'\subset K$ is a compact subset of $\widehat{G}$. Now, there are two cases to consider:
\begin{enumerate}
  \item $\widehat{\tau}(K')$ is a non-degenerate interval.
  \\
  Here we apply a function $x\mapsto e^{icx}$ for sufficiently big $c>0$ to obtain a measure $\nu\in\mathcal{S}(G)$ with $\widehat{\nu}(K')=\{z\in\mathbb{C}:|z|=1\}$
  \item $\widehat{\tau}(K')$ is a single point.
  \\
  We are allowed to assume that $\widehat{\tau}(K')=\{1\}$. Then, $\widehat{\mu}(K')$ is a non-trivial subcontinuum of the unit circle which is obviously an arc. Raising $\mu$ to a sufficiently big convolution power we obtain the same conclusion as in the previous point.
\end{enumerate}
Therefore, without loss of generality, we are allowed to assume that we have on our disposal a measure $\nu\in\mathcal{S}(G)$ satisfying $\widehat{\nu}(K')=\{z\in\mathbb{C}:|z|=1\}$ where $K'$ is a compact subset $\widehat{G}$.
\\
Using Theorem 2.6.8 (check also Theorem 2.6.1) from \cite{r} we find $f\in L^{1}(G)$ with compactly supported Fourier transform such that $\widehat{f}=1$ on $K'$, $\widehat{f}\geq 0$ and a function $g\in L^{1}(G)$ for which $\widehat{g}=1$ on the support of the Fourier transform of $f$. Let $\omega\in M(G)$ be measure with non-natural spectrum contained in the unit circle (modified Parreau measure - check Remark \ref{modp}) and define $\rho\in M(G)$ by the formula:
\begin{equation*}
\rho:=f\ast\nu+(\delta_{0}-g)\ast\omega.
\end{equation*}
We verify that $\rho\in\mathcal{N}(G)$: let us take $\varphi\in\triangle(M(G))\setminus\widehat{G}$ and note that $\varphi(\rho)=\varphi(\omega)\in\{z\in\mathbb{C}:|z|=1\}$ (recall $\varphi|_{L^{1}(G)}=0$). Now, for $\gamma\in K'$ we have $\widehat{\rho}(\gamma)=\widehat{\nu}(\gamma)$ implying $\{z\in\mathbb{C}:|z|=1\}=\widehat{\nu}(K')=\widehat{\rho}(K')\subset\widehat{\rho}(\widehat{G})$ which finishes the argument.
\\
By the assumption, $\nu\in\mathcal{S}(G)$ which gives $\nu^{-1}\in\mathcal{S}(G)$ (compare Lemma \ref{rozl}). Thus $\rho\ast\nu^{-1}\in\mathcal{N}(G)$,
\begin{equation*}
\rho\ast\nu^{-1}=f+(\delta_{0}-g)\ast\omega\ast\nu^{-1}\in\mathcal{N}(G).
\end{equation*}
Put $\xi:=(\delta_{0}-g)\ast\omega\ast\nu^{-1}$ and let us check the assumptions of Lemma \ref{glup}:$f\ast (\delta_{0}-g)=0$ implying $f\ast\xi=0$ and since $\xi+f\rho\ast\nu^{-1}$ we also get $\xi+f\in\mathcal{N}(G)$. As $\sigma(f)\subset\mathbb{R}_{+}$ and for $\varphi\in\triangle(M(G))\setminus\widehat{G}$ we get $\varphi(\xi)=\varphi(\omega)\varphi(\nu^{-1})\in\{z\in\mathbb{C}:|z|=1\}$ we conclude $\sigma(f)\cap\widehat{\xi}\left(\triangle(M(G))\setminus\widehat{G}\right)\subset\{1\}$. By Lemma \ref{glup} we obtain $\xi\in\mathcal{N}(G)$. Using the properties of spectrally reasonable measures (check Lemma \ref{rozl}), $(\delta_{0}-g)\ast\omega\in\mathcal{N}(G)$. This leads to a contradiction as for $\varphi\in\triangle(M(G))\setminus\widehat{G}$ such that $\varphi(\omega)\notin\overline{\widehat{\omega}(\widehat{G})}$ we get $\varphi((\delta_{0}-g)\ast\omega)=\varphi(\omega)\in\{z\in\mathbb{C}:|z|=1\}$ so by the assumption on the naturality of the spectrum of $(\delta_{0}-g)\ast\omega$ there exists a sequence of characters $(\gamma_{n})_{n\in\mathbb{N}}$ satisfying $(1-\widehat{g}(\gamma_{n}))\cdot\widehat{\omega}(\gamma_{n})\xrightarrow[n\rightarrow\infty]{}\varphi(\omega)$ and this implies $\widehat{\omega}(\gamma_{n})\xrightarrow[n\rightarrow\infty]{}\varphi(\omega)$ since $\sigma(\omega)\subset \{z\in\mathbb{C}:|z|=1\}$.
\end{proof}
\begin{thm}\label{spoj1}
Let $G$ be a locally compact Abelian group such that $\widehat{G}$ is connected. Then $\mathcal{S}(G)=\mathrm{lin}(\delta_{0})$.
\end{thm}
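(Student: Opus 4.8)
The plan is to establish the non-trivial inclusion $\mathcal{S}(G)\subseteq\mathrm{lin}(\delta_{0})$; the reverse inclusion is immediate, since adding a multiple of the unit $\delta_{0}$ merely translates both the spectrum and the range of the Fourier--Stieltjes transform and hence preserves naturality. First I would record that under the hypotheses the group $G$ is automatically non-compact and non-discrete: by the standing assumption $\widehat{G}$ is non-compact, and a connected non-compact group cannot be discrete (a discrete connected group is trivial, hence compact). This places us in exactly the setting where Corollary \ref{spoj} and Theorem \ref{glon} are available.

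The central device is to pass to self-adjoint elements. Given $\mu\in\mathcal{S}(G)$, write $\mu=\mu_{1}+i\mu_{2}$ with $\mu_{1}=\tfrac{1}{2}(\mu+\widetilde{\mu})$ and $\mu_{2}=\tfrac{1}{2i}(\mu-\widetilde{\mu})$ Hermitian; since $\mathcal{S}(G)$ is a $\ast$-subalgebra (Theorem \ref{pod}), both $\mu_{j}\in\mathcal{S}(G)$. For each Hermitian $\mu_{j}$ one has $\sigma(\mu_{j})\subset\mathbb{R}$, and by Corollary \ref{spoj} the spectrum is connected, hence a compact interval $[\alpha,\beta]$. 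It therefore suffices to prove that this interval degenerates to a point: once $\sigma(\mu_{j})=\{\alpha\}$, the element $\mu_{j}-\alpha\delta_{0}$ has spectral radius zero, and semisimplicity of $M(G)$ (injectivity of the Fourier--Stieltjes transform) forces $\mu_{j}=\alpha\delta_{0}$; combining the two coordinates then gives $\mu\in\mathrm{lin}(\delta_{0})$.

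To show $[\alpha,\beta]$ is a point I argue by contradiction, assuming $\alpha<\beta$. In particular $\mu_{j}\notin\mathrm{lin}(\delta_{0})$, so Theorem \ref{glon} applies and forbids $\widehat{\mu_{j}}(K)$ from having non-empty interior in $\sigma(\mu_{j})$ for every compact $K\subset\widehat{G}$. Now connectedness of $\widehat{G}$ enters decisively: $\widehat{\mu_{j}}(\widehat{G})$ is a connected subset of $\mathbb{R}$, i.e. an interval, and it is dense in $\sigma(\mu_{j})=\overline{\widehat{\mu_{j}}(\widehat{G})}=[\alpha,\beta]$, so it contains the whole open interval $(\alpha,\beta)$. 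Choosing two values $s<t$ in $(\alpha,\beta)$ together with characters mapping onto them, I would enclose these characters in a single connected compact set $K\subset\widehat{G}$ --- available because a connected locally compact abelian group is isomorphic to $\mathbb{R}^{n}\times K_{0}$ with $K_{0}$ compact connected, so any finite set of points lies in a connected compact subset $\overline{B(0,R)}\times K_{0}$. Then $\widehat{\mu_{j}}(K)$ is a compact subinterval of $\mathbb{R}$ containing $[s,t]$, hence has non-empty interior in $[\alpha,\beta]$, contradicting Theorem \ref{glon}. (Alternatively, exhausting $\widehat{G}$ by an increasing sequence of compact sets and invoking the Baire property of $(\alpha,\beta)$ locates the same $K$.)

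The step I expect to be the crux is this final geometric point, and it is precisely what the self-adjoint reduction buys us: in $\mathbb{R}$ a connected set with empty interior is a single point, so once the spectrum is a genuine interval the non-degeneracy propagates to a compact piece of $\widehat{G}$. Over $\mathbb{C}$ this fails --- a dense family of arcs can be nowhere dense while filling out a two-dimensional continuum --- so working directly with $\sigma(\mu)$ instead of with $\sigma(\mu_{1})$ and $\sigma(\mu_{2})$ would not close the argument. The only remaining technical care is the realization of a non-degenerate subinterval as $\widehat{\mu_{j}}(K)$ for a single compact $K$, which the structure theory of connected locally compact abelian groups supplies.
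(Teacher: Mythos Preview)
Your argument is correct and follows the same overall strategy as the paper: reduce to an element of $\mathcal{S}(G)$ with real spectrum, use connectedness of $\widehat{G}$ to see that the Fourier--Stieltjes image is a genuine interval, and then produce a compact $K\subset\widehat{G}$ whose image under the transform has non-empty interior in the spectrum, contradicting Theorem~\ref{glon}. The differences are in the implementation. The paper reduces via $\tau=\mu\ast\widetilde{\mu}$ and must then treat separately the degenerate case where $\widehat{\tau}$ is constant (passing back to $\mu$ itself, with values now on the unit circle, and repeating the argument there); your Hermitian decomposition $\mu=\mu_{1}+i\mu_{2}$ sidesteps this bifurcation entirely, since a Hermitian $\mu_{j}$ with one-point spectrum is already a multiple of $\delta_{0}$. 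For locating the compact $K$, the paper builds a $\sigma$-compact exhaustion $\widehat{G}=\bigcup_{n}\overline{V_{n}}$ from a relatively compact neighbourhood of the identity and invokes Baire, whereas you use the structure theorem $\widehat{G}\cong\mathbb{R}^{n}\times K_{0}$ to pick a single connected compact set directly --- both routes are standard, and you note the Baire alternative yourself.
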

\begin{proof}
Let $V\subset \widehat{G}$ be an open symmetric ($V=-V$) neighborhood of the unit with compact closure and let us define $V_{1}:= V$ and $V_{n+1}:=V_{n}+V$ for $n\geq 1$. Consider
\begin{equation*}
H:=\bigcup_{n\in\mathbb{N}}V_{n}.
\end{equation*}
It is clear that $H$ is an open subgroup of $\widehat{G}$ and hence also closed. By the connectedness of $\widehat{G}$ we get $H=\widehat{G}$. Moreover, $\overline{V_{n}}$ is compact for every $n\in\mathbb{N}$ (easy exercise in general topology).
\\
Let $\mu\in\mathcal{S}(G)$ and define $\tau=\mu\ast\widetilde{\mu}\in\mathcal{S}(G)$. Then $\widehat{\tau}(\widehat{G})$ is a connected subset of $\mathbb{R}_{+}$. If $\widehat{\tau}(\widehat{G})$ is a non-degenerate interval then an application of the Baire category theorem shows that there exists $n_{0}\in\mathbb{N}$ for which $\widehat{\tau}\left(\overline{V_{n_{0}}}\right)$ has a non-empty interior in $\overline{\widehat{\tau}(\widehat{G})}=\sigma(\tau)$. This leads to a contradiction by Theorem \ref{glon}.
\\
If $\widehat{\tau}(\widehat{G})$ is a single point then taking a suitable constant multiple of $\tau$ we are allowed to assume $\widehat{\tau}(\widehat{G})=\{1\}$ and hence $\widehat{\mu}(\widehat{G})$ is a connected subset of the unit circle. Now, in case of a one-point image we obtain $\mu\in\mathrm{lin}(\delta_{0})$ and for the other case we can repeat the Bair category argument and apply Theorem \ref{glon} to obtain a contradiction.
\end{proof}
By the structure theorem for locally compact Abelian groups, if $\widehat{G}$ is connected then $\widehat{G}=\mathbb{R}^{n}\times K$ where $K$ is a connected compact Abelian group. However, by Theorem 2.5.6 in \cite{r}, the aforementioned form of $\widehat{G}$ is equivalent to $G=\mathbb{R}^{n}\times D$ where $D$ is a discrete torsion-free group. In view of this facts Theorem \ref{spoj1} is equivalent to the following one.
\begin{thm}\label{spojk}
Let $G=\mathbb{R}^{n}\times D$ where $D$ is a discrete torsion-free group. Then $\mathcal{S}(G)=\mathrm{lin}(\delta_{0})$.
\end{thm}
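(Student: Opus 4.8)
The plan is to obtain this result as an immediate reformulation of Theorem \ref{spoj1}; essentially the entire content is to verify that the hypothesis $G=\mathbb{R}^{n}\times D$ with $D$ discrete and torsion-free is equivalent, via Pontryagin duality, to the connectedness of $\widehat{G}$. First I would compute the dual group. Since duality carries direct products to direct products and $\widehat{\mathbb{R}^{n}}\cong\mathbb{R}^{n}$, we have $\widehat{G}\cong\mathbb{R}^{n}\times\widehat{D}$. The factor $\mathbb{R}^{n}$ is connected, so the connectedness of $\widehat{G}$ reduces entirely to that of $\widehat{D}$.

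The key classical fact I would invoke is the duality between torsion and connectedness for the discrete factor: a discrete Abelian group $D$ is torsion-free if and only if its dual $\widehat{D}$ is a connected compact group (this is precisely the dual statement to the structure-theoretic decomposition already quoted just before the theorem; see Theorem 2.5.6 in \cite{r}). Granting this, $D$ torsion-free forces $\widehat{D}$ connected, and a product of connected spaces is connected, so $\widehat{G}\cong\mathbb{R}^{n}\times\widehat{D}$ is connected. This forward direction is all that is logically required to apply Theorem \ref{spoj1}; for the sake of the equivalence framed in the remark preceding the statement, one also notes the converse, already recorded there, that connectedness of $\widehat{G}$ forces through the structure theorem the form $\widehat{G}=\mathbb{R}^{n}\times K$ with $K$ connected compact, whence $G\cong\mathbb{R}^{n}\times\widehat{K}$ with $\widehat{K}$ discrete and torsion-free.

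Having matched the two hypotheses, I would simply apply Theorem \ref{spoj1} to $G$: its dual $\widehat{G}$ being connected yields $\mathcal{S}(G)=\mathrm{lin}(\delta_{0})$, which is exactly the assertion. There is essentially no obstacle at this stage, since all the analytic difficulty—the argument through the 'fat' Fourier-Stieltjes transform of Theorem \ref{glon} together with the modified Parreau measures of Remark \ref{modp}—has already been absorbed into Theorem \ref{spoj1}. The only point demanding any care is the clean invocation of the torsion-free/connected duality, but this is a standard consequence of Pontryagin duality and the structure theorem for locally compact Abelian groups.
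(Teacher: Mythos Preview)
Your proposal is correct and matches the paper's approach exactly: the paper derives Theorem \ref{spojk} as an immediate reformulation of Theorem \ref{spoj1}, using the structure theorem together with Theorem 2.5.6 of \cite{r} to identify the hypothesis $G=\mathbb{R}^{n}\times D$ (with $D$ discrete torsion-free) with the connectedness of $\widehat{G}$. There is no independent argument in the paper beyond this duality observation, and you have reproduced it accurately.
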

\section{The circle group}
In this section we characterize all spectrally reasonable measures on the circle group.
\\
We start with recalling the fact on discrete measures (see Section 4 in \cite{ow}).
\begin{prop}\label{dysnie}
$\mathcal{S}(\mathbb{T})\cap M_{d}(\mathbb{T})=\mathrm{lin}(\delta_{0})$.
\end{prop}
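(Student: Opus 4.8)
The plan is to prove both inclusions, the nontrivial one being $\mathcal{S}(\mathbb{T})\cap M_{d}(\mathbb{T})\subseteq\mathrm{lin}(\delta_{0})$. The inclusion $\mathrm{lin}(\delta_{0})\subseteq\mathcal{S}(\mathbb{T})\cap M_{d}(\mathbb{T})$ is immediate: $\delta_{0}$ is the unit of the subalgebra $\mathcal{S}(\mathbb{T})$ by Theorem \ref{pod}, so every scalar multiple of $\delta_{0}$ is spectrally reasonable and discrete.

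For the reverse inclusion I would first reduce to single atoms. If $\mu=\sum_{n}c_{n}\delta_{x_{n}}\in\mathcal{S}(\mathbb{T})\cap M_{d}(\mathbb{T})$ with all $c_{n}\neq 0$, then each $\delta_{x_{n}}$ is absolutely continuous with respect to $\mu$, so Proposition \ref{lpod} (that $\mathcal{S}(\mathbb{T})$ is an $L$-subalgebra) gives $\delta_{x_{n}}\in\mathcal{S}(\mathbb{T})$. Since $\mathcal{S}(\mathbb{T})$ is a unital $\ast$-subalgebra (Theorem \ref{pod}) and $\delta_{x}\ast\delta_{y}=\delta_{x+y}$, $\widetilde{\delta_{x}}=\delta_{-x}$, the set $S:=\{x\in\mathbb{T}:\delta_{x}\in\mathcal{S}(\mathbb{T})\}$ is a subgroup of $\mathbb{T}$, and since $\mathcal{S}(\mathbb{T})$ is closed we get $\mathcal{S}(\mathbb{T})\cap M_{d}(\mathbb{T})=\ell^{1}(S)$. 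The task therefore becomes to show $S=\{0\}$, i.e. that $\delta_{x}\in\mathcal{S}(\mathbb{T})$ forces $x=0$.

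The torsion elements of $S$ are eliminated by the finite-spectrum results. If $x\neq 0$ is rational, then $\sigma(\delta_{x})$ is a finite set of roots of unity, so $\delta_{x}\in\mathcal{S}(\mathbb{T})\setminus\mathrm{lin}(\delta_{0})$ has finite spectrum and Corollary \ref{skon} forces $\delta_{x}$ or $\delta_{0}-\delta_{x}$ to be a trigonometric polynomial; this is impossible, since for $x\neq 0$ both $\delta_{x}$ and $\delta_{0}-\delta_{x}$ are singular with respect to the Haar measure while trigonometric polynomials are absolutely continuous. (Equivalently, the normalized Haar measure of the finite cyclic group $\langle x\rangle$ would be a nontrivial idempotent in $\mathcal{S}(\mathbb{T})$, contradicting Theorem \ref{idem}.) Hence $S$ is torsion-free, and it remains to rule out an irrational $x\in S$, for which $\langle x\rangle$ is dense in $\mathbb{T}$ and $\overline{\widehat{\delta_{x}}(\mathbb{Z})}=\sigma(\delta_{x})$ is the whole unit circle.

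The irrational case is the heart of the matter and the place where I expect the main obstacle. Here I would follow the scheme of Theorems \ref{idem} and \ref{glon}: take a modified Parreau measure $\omega$ (Remark \ref{modp}) with $\|\omega\|\leq 1$, spectrum contained in the unit circle and $\omega\notin\mathcal{N}(\mathbb{T})$, witnessed by some $\varphi_{0}\in\triangle(M(\mathbb{T}))\setminus\widehat{\mathbb{T}}$ with $|\varphi_{0}(\omega)|=1$ and $\varphi_{0}(\omega)\notin\overline{\widehat{\omega}(\mathbb{Z})}$; then, using a Helson--Cantor set inside the Bohr compactification of $\widehat{\mathbb{T}}=\mathbb{Z}$ exactly as in the proof of Theorem \ref{idem}, produce a discrete (hence natural) measure whose Fourier--Stieltjes transform covers the closed unit disk in the closure of its range over $\mathbb{Z}$. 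The idea is to assemble from these ingredients and from $\delta_{x}$ a measure with natural spectrum in which the Wiener--Pitt values contributed by $\omega$ on functionals off $\widehat{\mathbb{T}}$ already lie in the closure of the transform's range, and then to peel off factors using invertibility of $\delta_{x}$ in $\mathcal{S}(\mathbb{T})$ (Lemma \ref{rozl}) together with Lemmas \ref{mnoz} and \ref{glup}, reducing naturality back to $\omega\in\mathcal{N}(\mathbb{T})$ — a contradiction. The genuine difficulty, and the reason $\mathbb{T}$ needs a separate treatment, is that $\delta_{x}$ provides neither of the two localizing devices used in the earlier theorems: it is not an idempotent, so the clean $0/1$ splitting of Theorem \ref{idem} (which guarantees that exactly one summand is active on each functional) is unavailable; and since $\widehat{\delta_{x}}$ fills the circle only as a \emph{dense} subset indexed by the non-compact $\mathbb{Z}$, the compactly-Fourier-supported selector functions $f,g\in L^{1}$ of Theorem \ref{glon} (which there vanish on the bad functionals yet cover the circle over a compact set) cannot simultaneously do both jobs, because every such cut-off decays at infinity precisely where the orbit $\{e^{-inx}\}$ is dense. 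Overcoming this — replacing the idempotent and the cut-off by an argument driven by the density of $\{nx\}$ and the continuity of the Gelfand transforms — is the crux of the proof.
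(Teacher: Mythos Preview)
Your reduction to single Dirac masses via the $L$-subalgebra property and your treatment of the torsion case via Corollary~\ref{skon} are correct and match the paper. The gap is the irrational case: you correctly diagnose why the machinery of Theorems~\ref{idem} and~\ref{glon} does not transfer directly, but you stop at the diagnosis. ``Replacing the idempotent and the cut-off by an argument driven by the density of $\{nx\}$'' is a description of what must be done, not a proof; the Helson-set-in-the-Bohr-compactification route you sketch is in fact unnecessary and does not obviously close.

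The paper's insight is that you do not have to give up idempotents --- you just borrow one that has nothing to do with $x$. The measures $\frac{\delta_{0}\pm\delta_{\pi}}{2}$ are orthogonal idempotents whose Fourier supports are the even and odd integers. Because $x$ is irrational, the restriction of $\widehat{\delta_{x}}(n)=e^{-inx}$ to the even integers is already dense in the unit circle. Hence
\[
\nu:=\tfrac{\delta_{0}+\delta_{\pi}}{2}\ast\delta_{x}+\tfrac{\delta_{0}-\delta_{\pi}}{2}\ast\omega
\]
satisfies $\overline{\widehat{\nu}(\mathbb{Z})}\supset\{|z|=1\}$ from the first summand alone, while the two summands have disjoint Fourier supports; so $\nu\in\mathcal{N}(\mathbb{T})$ with no Helson set and no compact cut-off needed. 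If $\delta_{x}\in\mathcal{S}(\mathbb{T})$ then $\nu-\delta_{x}=-\frac{\delta_{0}-\delta_{\pi}}{2}\ast(\delta_{x}-\omega)\in\mathcal{N}(\mathbb{T})$; repeating the construction with the two idempotents interchanged gives $-\frac{\delta_{0}+\delta_{\pi}}{2}\ast(\delta_{x}-\omega)\in\mathcal{N}(\mathbb{T})$. These two measures have product zero, so Lemma~\ref{mnoz} yields $\delta_{x}-\omega\in\mathcal{N}(\mathbb{T})$ and then $\omega\in\mathcal{N}(\mathbb{T})$, the contradiction. In short, the $0/1$ splitting you believed unavailable is available --- it is supplied by $\delta_{\pi}$, not by $\delta_{x}$.
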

\begin{proof}
We will only show how to simplify the original argument from \cite{ow}. Since $\mathcal{S}(\mathbb{T})$ is an $L$-subalgebra of $M(\mathbb{T})$ is it enough to justify $\delta_{\alpha}\notin\mathcal{S}(\mathbb{T})$ for every $\alpha\in\mathbb{T}\setminus\{0\}$. If $\alpha$ is an element of finite order in $\mathbb{T}$ then the result follows from Corollary \ref{skon} or Theorem 21 in \cite{ow}. In case of an element of infinite order we can refer to Theorem 24 in \cite{ow} but let us provide a shorter argument for reader's convenience.
\\
Let $\alpha\in\mathbb{T}$ be an element of infinite order and assume, towards contradiction, that $\delta_{\alpha}\in\mathcal{S}(\mathbb{T})$. Let $\mu\in M(\mathbb{T})$ be the modified Parreau measure (measure with non-natural spectrum contained in the unit circle, check Remark \ref{modp}). Consider $\nu\in\ M(\mathbb{T})$ defined by the formula:
\begin{equation*}
\nu:=\frac{\delta_{0}+\delta_{\pi}}{2}\ast\delta_{\alpha}+\frac{\delta_{0}-\delta_{\pi}}{2}\ast\mu.
\end{equation*}
The closure of the range of the Fourier-Stieltjes transform of the first summand fills the whole unit circle and since the supports of the Fourier-Stieltjes transforms of both summands are disjoin we easily get $\nu\in\mathcal{N}(\mathbb{T})$. By our assumption, $\nu-\delta_{\alpha}\in\mathcal{N}(\mathbb{T})$ and of course
\begin{equation}\label{p}
\nu-\delta_{\alpha}=-\frac{\delta_{0}-\delta_{\pi}}{2}\ast(\delta_{\alpha}-\mu)\in\mathcal{N}(\mathbb{T}).
\end{equation}
The same argument applied to a measure $\rho:=\frac{\delta_{0}-\delta_{\pi}}{2}\ast\delta_{\alpha}+\frac{\delta_{0}+\delta_{\pi}}{2}\ast\mu$ gives
\begin{equation}\label{d}
\rho-\delta_{\alpha}=-\frac{\delta_{0}+\delta_{\pi}}{2}\ast(\delta_{\alpha}-\mu)\in\mathcal{N}(\mathbb{T}).
\end{equation}
Combining (\ref{p}), (\ref{d}) with Lemma \ref{mnoz} we obtain $\delta_{\alpha}-\mu\in\mathcal{N}(\mathbb{T})$ and consequently $\mu\in\mathcal{N}(\mathbb{T})$ which is a contradiction.
\end{proof}
\begin{thm}\label{okz}
$\mathcal{S}(\mathbb{T})=M_{00}(\mathbb{T})\oplus\mathrm{lin}(\delta_{0})$.
\end{thm}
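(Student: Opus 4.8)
The plan is to prove the two inclusions separately, the inclusion $\supseteq$ being elementary and the inclusion $\subseteq$ carrying the essential content. For $\supseteq$, since $\delta_{0}$ is the unit of $\mathcal{S}(\mathbb{T})$ and adding a scalar multiple of $\delta_{0}$ shifts both the spectrum and the range of the Fourier--Stieltjes transform by the same constant, one has $\lambda\delta_{0}+\nu\in\mathcal{N}(\mathbb{T})$ whenever $\nu\in\mathcal{N}(\mathbb{T})$; hence $\mathrm{lin}(\delta_{0})\subseteq\mathcal{S}(\mathbb{T})$. The inclusion $M_{00}(\mathbb{T})\subseteq\mathcal{S}(\mathbb{T})$ is immediate from Theorem \ref{nato}, which gives $\mathcal{S}(\mathbb{T})\cap M_{0}(\mathbb{T})=M_{00}(\mathbb{T})$. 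The sum is direct: a nonzero multiple of $\delta_{0}$ has constant nonzero transform and therefore cannot vanish at the generalised characters off $\widehat{\mathbb{T}}$, so $M_{00}(\mathbb{T})\cap\mathrm{lin}(\delta_{0})=\{0\}$ (equivalently, $M_{00}(\mathbb{T})\subseteq M_{c}(\mathbb{T})$ whereas $\mathrm{lin}(\delta_{0})\subseteq M_{d}(\mathbb{T})$).

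For $\subseteq$, let $\mu\in\mathcal{S}(\mathbb{T})$ and write $\mu=\mu_{d}+\mu_{c}$ for its atomic and continuous parts. Both $\mu_{d}$ and $\mu_{c}$ are restrictions of $\mu$ to Borel sets, hence are of the form $f\mu$ and in particular absolutely continuous with respect to $\mu$; since $\mathcal{S}(\mathbb{T})$ is an $L$-subalgebra (Proposition \ref{lpod}), it follows that $\mu_{d},\mu_{c}\in\mathcal{S}(\mathbb{T})$. By Proposition \ref{dysnie} we obtain $\mu_{d}\in\mathcal{S}(\mathbb{T})\cap M_{d}(\mathbb{T})=\mathrm{lin}(\delta_{0})$, say $\mu_{d}=\lambda\delta_{0}$. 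It therefore remains to show that the continuous spectrally reasonable measure $\mu_{c}$ lies in $M_{00}(\mathbb{T})$. Once we know $\mu_{c}\in M_{0}(\mathbb{T})$, Theorem \ref{nato} applied to $\mu_{c}\in\mathcal{S}(\mathbb{T})\cap M_{0}(\mathbb{T})$ yields $\mu_{c}\in M_{00}(\mathbb{T})$ and completes the proof. Thus everything reduces to the single assertion that $\widehat{\mu_{c}}(n)\to 0$ as $|n|\to\infty$.

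The hard part is precisely this reduction to $M_{0}(\mathbb{T})$, and it is the one place where the argument must differ from Section 5: since $\widehat{\mathbb{T}}=\mathbb{Z}$ is discrete, every compact subset of the dual is finite and Theorem \ref{glon} is vacuous, so the obstruction to spectral reasonableness can only manifest itself through the behaviour of $\widehat{\mu_{c}}$ at infinity. I would argue by contradiction, assuming $\limsup_{|n|\to\infty}|\widehat{\mu_{c}}(n)|=c>0$, so that $|\widehat{\mu_{c}}|$ is bounded below on an infinite set $E\subseteq\mathbb{Z}$. Passing to $\tau=\mu_{c}\ast\widetilde{\mu_{c}}\in\mathcal{S}(\mathbb{T})$ (with $\widehat{\tau}\geq 0$ and $\widehat{\tau}(n_{k})\to c^{2}$ along a subsequence indexed by $E$) and exploiting the infinitude of $E$ together with a Helson set homeomorphic to the Cantor set inside a suitable compactification, the plan is to manufacture an auxiliary measure whose transform fills the closed unit disk at the exotic generalised characters, in the spirit of the disk-filling constructions in the proofs of Theorems \ref{idem} and \ref{glon}. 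Feeding this measure, perturbed by a modified Parreau measure $\omega$ (Remark \ref{modp}), through the $L$-subalgebra and symmetry structure of $\mathcal{S}(\mathbb{T})$ (Lemma \ref{rozl}) and then through Lemma \ref{mnoz}, should force $\omega\in\mathcal{N}(\mathbb{T})$, contradicting the defining property of $\omega$. I expect this construction to be the principal obstacle: unlike the idempotent situation, $\mu_{c}$ supplies no spectral projection to organise the argument, and Wiener's theorem guarantees only that the $L^{2}$-averages of $\widehat{\mu_{c}}$ vanish, so that $E$ is merely infinite of density zero. Extracting from such an $E$ the structured (Helson-type) subset needed to realise the disk-filling step, and doing so compatibly with the modified Parreau measure, is the crucial and delicate technical point of the whole theorem.
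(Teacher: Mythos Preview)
Your reduction is correct and matches the paper: the $L$-subalgebra property splits off the discrete part, Proposition~\ref{dysnie} handles it, and Theorem~\ref{nato} reduces everything to showing that a continuous $\mu_{c}\in\mathcal{S}(\mathbb{T})$ lies in $M_{0}(\mathbb{T})$. The inclusion $\supseteq$ and the directness of the sum are fine.

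The gap is exactly where you locate it, and your proposed plan for closing it does not work as stated. You want to extract, from an arbitrary infinite set $E\subset\mathbb{Z}$ of density zero, enough structure (a Helson set in some compactification) to run a disk-filling argument in the style of Theorem~\ref{idem}. There is no reason to expect such structure in the closure of $E$ in $b\mathbb{Z}$; the Helson sets used in Theorem~\ref{idem} live inside an open \emph{subgroup}, and you have no subgroup here. You yourself flag this as ``the crucial and delicate technical point'', but in fact it is not a difficulty to be overcome --- it is the wrong mechanism.

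The paper avoids the problem by never looking at $E$ at all. Instead it analyses $\sigma(\tau)\subset[0,1]$ for $\tau=\mu_{c}\ast\widetilde{\mu_{c}}$ and runs a dichotomy on whether $\sigma(\tau)$ contains a nondegenerate interval. If it does not, one can pick $\varepsilon_{n}\searrow 0$ avoiding $\sigma(\tau)$ and apply functional calculus with $f=\chi_{(\varepsilon_{n},1]}$ to produce idempotents in $\mathcal{S}(\mathbb{T})$ whose Fourier supports are the level sets $\{k:\widehat{\tau}(k)>\varepsilon_{n}\}$; Theorem~\ref{idem} forces each such set to be finite or cofinite, and Wiener's lemma rules out cofinite (since $\tau$ is continuous), so all are finite and $\tau\in M_{0}(\mathbb{T})$, hence $\mu_{c}\in M_{0}(\mathbb{T})$. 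If $\sigma(\tau)$ \emph{does} contain an interval $I$, one uses the index-two subgroup $2\mathbb{Z}\subset\mathbb{Z}$: by Baire, one of $\overline{\widehat{\tau}(2\mathbb{Z})}$ or $\overline{\widehat{\tau}(2\mathbb{Z}+1)}$ contains a subinterval, and after an affine change one has $[0,2\pi]\subset\overline{\widehat{\tau}(2\mathbb{Z})}$. Then $e^{i\tau}\in\mathcal{S}(\mathbb{T})$ has transform filling the unit circle along $2\mathbb{Z}$, and the measure
\[
\xi=\tfrac{\delta_{0}+\delta_{\pi}}{2}\ast e^{i\tau}+\tfrac{\delta_{0}-\delta_{\pi}}{2}\ast\omega
\]
(with $\omega$ a modified Parreau measure) is shown to have natural spectrum; multiplying by $e^{-i\tau}$ and invoking Lemma~\ref{glup} and Lemma~\ref{rozl} forces $\tfrac{\delta_{0}-\delta_{\pi}}{2}\ast\omega\in\mathcal{N}(\mathbb{T})$, a contradiction. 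The point is that the ``structure'' you were seeking in $E$ is supplied instead by the ambient subgroup $2\mathbb{Z}$ and by the \emph{spectrum} of $\tau$ rather than the range of $\widehat{\tau}$; this is also why the paper remarks in Section~7 that the argument generalises precisely to compact groups whose duals contain a proper finite-index subgroup.
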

\begin{proof}
Let $\mu\in\mathcal{S}(\mathbb{T})$ satisfy $\|\mu\|\leq 1$. Since $\mathcal{S}(\mathbb{T})$ is an $L$-subalgebra of $M(\mathbb{T})$ we can assume, in view of Proposition \ref{dysnie}, that $\mu\in M_{c}(\mathbb{T})$. Put $\tau:=\mu\ast\widetilde{\mu}\in\mathcal{S}(\mathbb{T})\cap M_{c})(\mathbb{T})$ (compare Theorem \ref{pod}). The characterization of spectrally reasonable measures with finite spectrum established in Corollary \ref{skon} allows us to assume that $\sigma(\tau)$ is infinite.
\\
If $\sigma(\tau)$ does not contain any interval then let us take a sequence $\varepsilon_{n}\xrightarrow[n\rightarrow\infty]{}0$ such that $\sigma(\tau)\cap (\varepsilon_{n})_{n=1}^{\infty}=\emptyset$. The set $[0,1]\setminus\sigma(\tau)$ is open (as a subset of $[0,1]$) and hence for every $n\in\mathbb{N}$, it contains an open interval with $\varepsilon_{n}$ in the interior. Let us fix $n_{0}\in\mathbb{N}$ and find two disjoint open subsets $U,V\subset\mathbb{C}$ such that $[0,\varepsilon_{n})\subset U$ and $(\varepsilon,1]\subset V$. Consider $f:U\cup V\rightarrow\mathbb{C}$ defined by the conditions $f|_{U}=0$, $f|_{V}=1$. The application of functional calculus gives an idempotent measure $\rho:=f(\mu)$ for which $\mathrm{supp}\widehat{\rho}=\{k\in\mathbb{Z}:|\widehat{\tau}(k)|>\varepsilon_{n}\}$. By Theorem \ref{idem}, the set $\mathrm{supp}\widehat{\rho}$ is finite or its complement is finite. There are two cases to be considered at this point.
\begin{enumerate}
  \item For every $n\in\mathbb{N}$ the set obtained from the described procedure is finite.
  \\
  Then $\tau\in M_{0}(\mathbb{T})$ implying $\mu\in M_{0}(\mathbb{T})$. Of course, we also have $\mu\in\mathcal{N}(\mathbb{T})$. Using Zafran's theorem we conclude $\mu\in M_{00}(\mathbb{T})$.
  \item For some $n_{0}\in\mathbb{N}$ the set $\{k\in\mathbb{Z}:|\widehat{\tau}(k)|>\varepsilon_{n_{0}}\}$ has a finite complement. But then, by Wiener's lemma, the measure $\tau$ has a non-trivial discrete part which is a contradiction.
\end{enumerate}
Thus, we can assume that $\sigma(\tau)$ contains a non-degenerate closed interval $I$. Then $I=\left(\overline{\widehat{\tau}(2\mathbb{Z})}\cap I\right)\cup\left(\overline{\widehat{\tau}(2\mathbb{Z}+1)}\cap I\right)$. At least one of these sets contains a non-degenerate interval $J$ (it follows from Baire's theorem). Using a linear transformation we reduce the situation to the case of $J=[0,2\pi]$.
\\
Let $\omega\in M(\mathbb{T})$ be a modified Parreau measure (see Remark \ref{modp}). Note that $\omega=\frac{\delta_{0}+\delta_{\pi}}{2}\ast\omega+\frac{\delta_{0}-\delta_{\pi}}{2}\ast\omega$. Since $\frac{\delta_{0}+\delta_{\pi}}{2}\ast \frac{\delta_{0}-\delta_{\pi}}{2}=0$ we have $\frac{\delta_{0}+\delta_{\pi}}{2}\ast\omega\notin\mathcal{N}(\mathbb{T})$ or $\frac{\delta_{0}-\delta_{\pi}}{2}\ast\omega\notin\mathcal{N}(\mathbb{T})$ by Lemma \ref{mnoz}. Without loss of generality, we assume $\frac{\delta_{0}-\delta_{\pi}}{2}\ast\omega\notin\mathcal{N}(\mathbb{T})$.
\\
Let us return to the measure $\tau$. If $J=[0,2\pi]\subset \overline{\widehat{\tau}(2\mathbb{Z})}$ then we proceed and in the second case we replace the measure $\tau$ by $e^{it}\tau$ reducing the discussion to the first case.
\\
Let us define $\xi\in M(\mathbb{T})$ by the formula:
\begin{equation*}
\xi:=\frac{\delta_{0}+\delta_{\pi}}{2}\ast e^{i\tau}+\frac{\delta_{0}-\delta_{\pi}}{2}\ast\omega.
\end{equation*}
We verify $\xi\in\mathcal{N}(\mathbb{T})$ which implies $\xi\ast e^{-i\tau}=\frac{\delta_{0}+\delta_{\pi}}{2}+\frac{\delta_{0}-\delta_{\pi}}{2}\ast\omega\ast e^{-i\tau}\in\mathcal{N}(\mathbb{T})$ (consult Lemma \ref{rozl}). It is elementary to check the assumptions of Lemma \ref{glup} which gives $\frac{\delta_{0}-\delta_{\pi}}{2}\ast\omega\ast e^{-i\tau}\in\mathcal{N}(\mathbb{T})$ and finally $\frac{\delta_{0}-\delta_{\pi}}{2}\ast\omega\in\mathcal{N}(\mathbb{T})$ again by Lemma \ref{rozl} which is the desired contradiction.
\end{proof}
\section{Concluding remarks}
The approach used in the proof of Theorem \ref{okz} can be aaplied to prove an analogous assertion for a wider class of compact Abelian groups. The inspection of the arguments utilized in the reasoning shows that the whole justification relies only on the existence of a subgroup of finite index in the dual group group of $G$. However, there are compact Abelian groups for which this condition is not satisfied - a standard examples are given by the groups of $p$-adic integers.
\\
In view of the results of the second section, Theorems \ref{spojk} and \ref{okz} we strongly believe that the following conjecture holds true.
\pagebreak
\begin{con}
Let $G$ be a locally compact non-discrete Abelian group.
\begin{enumerate}
  \item If $G$ is compact then $\mathcal{S}(G)=M_{00}(G)\oplus\mathrm{lin}(\delta_{0})$.
  \item If $G$ is non-compact then $\mathcal{S}(G)=\mathrm{lin}(\delta_{0})$.
\end{enumerate}
\end{con}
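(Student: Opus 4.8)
The plan is to establish each case by proving the reverse inclusion, since the forward inclusions are immediate: $\mathrm{lin}(\delta_0)\subseteq\mathcal S(G)$ because $\delta_0$ is the unit of the algebra, $M_{00}(G)\subseteq\mathcal S(G)$ for compact $G$ by Theorem \ref{nato} (which gives $M_{00}(G)=\mathcal S(G)\cap M_0(G)$), and $\mathcal S(G)$ is closed under addition by Theorem \ref{pod}; since $\delta_0\notin M_0(G)$ the sum $M_{00}(G)\oplus\mathrm{lin}(\delta_0)$ is direct. Thus in the compact case it remains to show $\mathcal S(G)\subseteq M_{00}(G)\oplus\mathrm{lin}(\delta_0)$, and in the non-compact case that every $\mu\in\mathcal S(G)$ lies in $\mathrm{lin}(\delta_0)$. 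The two reverse inclusions are generalizations of Theorems \ref{okz} and \ref{spojk} respectively, and they rely on two genuinely different mechanisms.

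For the non-compact case I would argue as in Theorem \ref{spojk}. Fix $\mu\in\mathcal S(G)$; since $\mu\in\mathcal N(G)$ we have $\sigma(\mu)=\overline{\widehat\mu(\widehat G)}$, and by Corollary \ref{spoj} this set is connected. If $\sigma(\mu)$ is a single point then $\widehat\mu$ is constant and the uniqueness of the Fourier--Stieltjes transform forces $\mu\in\mathrm{lin}(\delta_0)$. Otherwise $\sigma(\mu)$ is a non-trivial continuum, and the goal is to produce a compact $K\subset\widehat G$ such that $\widehat\mu(K)$ has non-empty interior in $\sigma(\mu)$, so that Theorem \ref{glon} yields $\mu\notin\mathcal S(G)$, a contradiction. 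When $\widehat G$ is connected this is exactly the Baire-category extraction carried out in the proof of Theorem \ref{spoj1}, because then, after passing to $\tau=\mu\ast\widetilde\mu$, the image $\widehat\tau(\widehat G)$ is itself an interval and a compact exhaustion of $\widehat G$ produces a piece of non-empty interior.

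For the compact case I would follow the blueprint of Theorem \ref{okz}. Writing $\mu=\mu_d+\mu_c$ and using that $\mathcal S(G)$ is an $L$-subalgebra (Proposition \ref{lpod}), both parts are again spectrally reasonable; a version of Proposition \ref{dysnie} valid for general compact $G$ (provable by adapting Corollary \ref{skon} and the modified Parreau construction of Remark \ref{modp}) would give $\mu_d\in\mathrm{lin}(\delta_0)$, reducing matters to $\mu_c\in\mathcal S(G)\cap M_c(G)$. Passing to $\tau=\mu_c\ast\widetilde\mu_c$, which has real spectrum, I would run the idempotent dichotomy of Theorem \ref{idem} together with Wiener's lemma (in the averaged form available on the amenable discrete group $\widehat G$) to show that either $\tau\in M_0(G)$ --- whence $\mu_c\in M_0(G)\cap\mathcal N(G)=M_{00}(G)$ by Zafran's Theorem \ref{z} --- or $\sigma(\tau)$ contains a non-degenerate closed interval $I$. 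In the latter case, choosing a proper subgroup $L\subset\widehat G$ of finite index $m$, one splits $I=\bigcup_{j=1}^m\bigl(\overline{\widehat\tau(\gamma_j+L)}\cap I\bigr)$ over the cosets of $L$, extracts by Baire a subinterval $J$ carried by the closure of $\widehat\tau$ on one coset, and mimics the construction of $\xi$ in Theorem \ref{okz}: the coset idempotents $p_j\in M_d(G)$ supported on the finite group $L^\perp$ (with $\widehat{p_j}=\chi_{\gamma_j+L}$) replace $\frac{\delta_0\pm\delta_\pi}{2}$, and convolving the relevant coset with $e^{ic\tau}$ and the complementary coset with a modified Parreau measure produces, via Lemmas \ref{glup}, \ref{mnoz} and \ref{rozl}, the measure with non-natural spectrum that contradicts $\mu_c\in\mathcal S(G)$.

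The hard part, and the reason the statement is only a conjecture, is that each mechanism requires structure the dual group may lack. In the compact case the construction above breaks down exactly when $\widehat G$ has no proper subgroup of finite index --- as noted in the concluding remarks this happens for divisible discrete duals such as $\mathbb Z(p^\infty)=\widehat{\mathbb Z_p}$, so the $p$-adic integers are the decisive test case. In the non-compact case the decisive obstacle is that Theorem \ref{glon} is powerless when every compact $K\subset\widehat G$ has finite (or otherwise nowhere dense) image under $\widehat\mu$: this is precisely what occurs when $\widehat G$ carries a compact open subgroup, i.e.\ for totally disconnected non-compact $G$ such as $\mathbb Q_p$, where $\widehat\mu(\widehat G)$ can be a countable dense subset of a connected $\sigma(\mu)$ and no compact set witnesses an interval. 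Overcoming these two cases --- producing the missing idempotent decomposition, respectively the missing spectral interval --- is where an essentially new idea beyond the methods of this paper would be needed.
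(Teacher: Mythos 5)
You were asked to prove a statement that the paper itself does not prove: it is the concluding \emph{Conjecture}, explicitly left open by the authors and supported only by the partial results of the earlier sections. Accordingly there is no paper proof to compare against, and your proposal --- to its credit --- does not pretend to close the conjecture: it is a reduction to the cases already settled, and that reduction is essentially correct. The easy inclusions ($\mathrm{lin}(\delta_{0})\subset\mathcal{S}(G)$; $M_{00}(G)\subset\mathcal{S}(G)$ for compact $G$ via Theorem \ref{nato}; directness of the sum since $\widehat{\delta_{0}}=1$ does not vanish at infinity) are fine, and your two mechanisms are exactly the paper's: the coset-idempotent/Parreau construction of Theorem \ref{okz} in the compact case, and the connectedness-plus-Baire extraction feeding Theorem \ref{glon} in the non-compact case (Theorems \ref{spoj1} and \ref{spojk}). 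Your diagnosis of the obstructions also matches the paper's concluding remarks precisely: the compact-case machinery consumes a proper finite-index subgroup of $\widehat{G}$ (to build the idempotents replacing $\frac{\delta_{0}\pm\delta_{\pi}}{2}$), which is unavailable when $\widehat{G}$ is divisible, e.g.\ $\widehat{\mathbb{Z}_{p}}=\mathbb{Z}(p^{\infty})$; and the non-compact argument needs $\widehat{\tau}(\widehat{G})$ to be an interval, which connectedness of $\widehat{G}$ supplies and which fails for totally disconnected duals such as that of $\mathbb{Q}_{p}$, where the images $\widehat{\tau}(K_{n})$ along a compact exhaustion are merely dense in $\sigma(\tau)$, may each be nowhere dense, and Baire category yields nothing.

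The genuine gap, then, is irreparable by design: no argument is given (and none is known) for compact $G$ whose dual lacks proper finite-index subgroups, nor for non-compact $G$ with disconnected dual outside the family $\mathbb{R}^{n}\times D$ of Theorem \ref{spojk}; the statement remains a conjecture exactly at the two cases you localize. Two smaller points in the parts you do sketch deserve flagging. First, the general-compact-group analogue of Proposition \ref{dysnie} that you invoke to dispose of $\mu_{d}$ is itself unproved in the paper beyond $\mathbb{T}$: the finite-order case does follow from Corollary \ref{skon} for arbitrary compact $G$, but the infinite-order case again consumes a finite-index subgroup of $\widehat{G}$ (in $\mathbb{T}$ this is $2\mathbb{Z}\subset\mathbb{Z}$), so it cannot be treated as an available ingredient --- it is part of the same open problem. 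Second, your appeal to an averaged Wiener lemma on the discrete group $\widehat{G}$ is legitimate (the identity $\sum_{x\in G}|\mu(\{x\})|^{2}=M\left(|\widehat{\mu}|^{2}\right)$ for an invariant mean $M$ on $\widehat{G}$ holds for any compact $G$), and it does support the dichotomy you run in parallel with Theorem \ref{idem}; stating it in that form would make the step airtight. In sum: you correctly reconstructed the state of the art, your partial arguments are sound, and your honest identification of where a genuinely new idea is needed coincides with the authors' own assessment.
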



\begin{thebibliography}{999}
\bibitem [Gh]{gh} Ghahramani, F:\textit{A new proof on the characterization of vanishing subalgebras of group algebras}. Proc. Amer. Math. Soc. \textbf{142} (2014), no. 2, 617–-621.
\bibitem [Grah]{graham} C.~C. Graham: \textit{A Riesz product proof of the Wiener--Pitt theorem}, Proc. Amer. Math. Soc. \textbf{44} (1974), 312--314.
\bibitem [GM]{grmc} C.~C. Graham, O.~C. McGehee: \textit{Essays in Commutative Harmonic Analysis}, Springer-Verlag, New York, 1979.
\bibitem [Hat]{h} O. Hatori: \textit{Measures with natural spectra on locally compact Abelian groups}, Proc. Amer. Math. Soc \textbf{126} (1998), 2351--2353.
\bibitem [HR]{hr} E. Hewitt, K. Ross: \textit{Abstract harmonic analysis. Vol. II: Structure and analysis for compact groups. Analysis on locally compact Abelian groups.}, Springer-Verlag, New York-Berlin 1970.
\bibitem [HS]{hs} O. Hatori, E. Sato: \textit{Decompositions of measures on compact Abelian groups}, Tokyo J. of Math. 24 (2001), no. 1, 13--18.
\bibitem [L]{l} T. Liu: \textit{On vanishing subalgebras}, Proc. Amer. Math. Soc. \textbf{14} 1963 162–166.
\bibitem [N] {n} S. Nadler: \textit{Continuum theory. An introduction}, Marcel Dekker Inc., New York, 1992.
\bibitem [OW]{ow} P. Ohrysko, M. Wojciechowski: \textit{Spectrally reasonable measures}, Reprinted in St. Petersburg Math. J. \textbf{28} (2017), no. 2, 259–271.
Algebra i Analiz 28 (2016), no. 2, 187–203.
\bibitem [OWa]{owa} P. Ohrysko, M. Wasilewski: \textit{Spectral theory of Fourier-Stieltjes algebras}, J. Math. Anal. App. 473 (2019) 174-200.
\bibitem [P]{p} F. Parreau: \textit{Measures with real spectra}, Invent. Math. 98 (1989), no. 2, 311–330.
\bibitem [PHG]{phg} F. Parreau, B. Host, C.~C. Graham: \textit{The interior of the \v{S}ilov boundary of M(G) is trivial}, J. Funct. Anal. \textbf{38} (1980), no. 3, 342–353.
\bibitem [R2]{r} W. Rudin: \textit{Fourier analysis on groups}, Wiley Classics Library, 1990.
\bibitem [Sc]{schreider} Y. A. \v{S}reider: \textit{The structure of maximal ideals in rings of measures with convolution}, Amer. Math. Soc. Translation \textbf{1953}, (1953). no. 81, 28 pp.; Mat. Sbornik N.S. \textbf{27(69)}, (1950), 297–318.
\bibitem [S]{sh} T. Shimizu: \textit{On a problem of vanishing algebras} Proc. Japan Acad. \textbf{46} 1970 3–4.
\bibitem [S]{s} T. P. Srinivasan: \textit{Doubly invariant subspaces}, Pacific J. Math. \textbf{14} 1964 701–707.
\bibitem [St]{st} K. Stromberg: \textit{An elementary proof of the Steinhaus theorem},  Proc. Amer. Math. Soc. \textbf{36} (1972), 308.
\bibitem [Wi]{wil} J.H. Williamson: \textit{A theorem on algebras of measures on topological groups}, Proc. Edinburgh Math. Soc. \textbf{11} 1958/1959 195–206.
\bibitem [WP]{wp} N. Wiener, H.R. Pitt: \textit{Absolutely convergent Fourier--Stieltjes transforms}, Duke Math. J. 4 (1938), no. 2, 420–436.
\bibitem [Zaf]{Zafran} M. Zafran: \textit{On the spectra of multipliers}, Pacific J. Math. \textbf{47} (1973), 609–626.
\bibitem [\.{Z}]{z} W. \.{Z}elazko: \textit{Banach algebras}, Elsevier Science Ltd and PWN, February 1973.
\end{thebibliography}
\end{document}